\def\disp{\displaystyle}
\def\dref#1{(\ref{#1})}
\theoremstyle{plain}
\newtheorem{theorem}{Theorem}[section]
\newtheorem{lemma}{Lemma}[section]
\theoremstyle{definition}
\newtheorem{remark}{Remark}[section]
\numberwithin{equation}{section}
\begin{document}

\title{\bf A new (and optimal) result   for boundedness of solution of a quasilinear   chemotaxis--haptotaxis model (with  logistic source)}

\author{Ling Liu $^{1}$, 
Jiashan Zheng$^{a}$\thanks{Corresponding author.   E-mail address:
 zhengjiashan2008@163.com (J. Zheng)}, Yu Li$^{a}$, Weifang Yan$^{a}$
\\
 $^{1}$
    Department of Basic Science,\\
    Jilin Jianzhu University, Changchun 130118, P.R.China \\
 $^{a}$
 School of Mathematics and Statistics Science,\\
     Ludong University, Yantai 264025,  P.R.China \\
}
\date{}

\maketitle \vspace{0.3cm}
\noindent
\begin{abstract}
This article deals with an initial-boundary value problem for the coupled chemotaxis-haptotaxis
system with nonlinear diffusion
$$
 \left\{\begin{array}{ll}
  u_t=\nabla\cdot( D(u)\nabla u)-\chi\nabla\cdot(u\nabla v)-
  \xi\nabla\cdot(u\nabla w)+\mu u(1- u-w),
x\in \Omega, t>0,\\
 \disp{\tau v_t=\Delta v- v +u},\quad
x\in \Omega, t>0,\\
\disp{w_t=- vw},\quad
x\in \Omega, t>0,\\
 \end{array}\right.\eqno(0.1)
$$
under homogeneous Neumann boundary conditions in a smooth bounded domain  $\Omega\subset\mathbb{R}^N(N\geq1)$, where $\tau\in\{0,1\}$ and $\chi$, $\xi$ and $\mu$ are given nonnegative parameters. 
The diffusivity
$ D(u)$ is assumed to satisfy
$$  D(u) \geq C_{D}(u+1)^{m-1}~ \mbox{for all}~ u\geq0~~\mbox{and}~~C_{D}>0.$$
In the present work it is shown that
if
$$
m\geq 2-\frac{2}{N}\lambda~~\mbox{with}~~0<\mu<\kappa_0,
\begin{array}{ll}\\
 \end{array}
$$
$$
m> 2-\frac{2}{N}\lambda~~\mbox{with}~~\mu\geq\kappa_0,
\begin{array}{ll}\\
 \end{array}
$$
or
$$m> 2-\frac{2}{N}~~\mbox{and}~~~\mu=0$$
or
$$m= 2-\frac{2}{N}~~\mbox{and}~~~C_{D} >\frac{C_{GN}(1+\|u_0\|_{L^1(\Omega)})^3}{4}(2-\frac{2}{N})^2\kappa_0,$$
then for all reasonably regular initial data, a corresponding initial-boundary value
problem for $(0.1)$ possesses a unique global classical solution that is uniformly bounded in $\Omega\times(0, \infty)$, where
$$
 \lambda=\frac{\kappa_0}{(\kappa_0-\mu)_+}
$$
and
$$
 \kappa_0=\left\{\begin{array}{ll}
\max_{s\geq1}\lambda_0^{\frac{1}{{{s}}+1}}(\chi+\xi\|w_0\|_{L^\infty(\Omega)})~~ \tau=1,\\
\chi~~~~~~\mbox{if}~~ \tau=0.\\
 \end{array}\right.
$$
Here  $C_{GN}$ and $\lambda_0$ are the  constants which are corresponding to the Gagliardo--Nirenberg  inequality (see Lemma \ref{lemma41}) and the maximal Sobolev
regularity (see Lemma \ref{lemma45xy1222232}), respectively.
Relying on a {\bf new} $L^p$-estimate techniques to raise the a priori
estimate of a solution from $L^1 (\Omega) \rightarrow L^{\lambda-\varepsilon} (\Omega) \rightarrow L^{\lambda} (\Omega)\rightarrow
L^{\lambda+\varepsilon} (\Omega)\rightarrow L^{p} (\Omega) (\mbox{for all}~p>1)$,
these results  thereby significantly extending   results of previous results of several authors (see Remarks 1.1 and 1.2) and some optimal results are obtained.

\end{abstract}

\vspace{0.3cm}
\noindent {\bf\em Key words:}~Boundedness;
Chemotaxis--haptotaxis; Nonlinear diffusion;
Global existence

\noindent {\bf\em 2010 Mathematics Subject Classification}:~  92C17, 35K55,
35K59, 35K20

\newpage
\section{Introduction}

 In order to  describe the cancer cell invasion into surrounding
healthy tissue,
  in 2005, Chaplain and Lolas (\cite{Chaplain1})
 proposed a pioneering mathematical model which is called chemotaxis--haptotaxis model
 \begin{equation}
 \left\{\begin{array}{ll}
  u_t=\nabla\cdot(D\nabla u)-\chi\nabla\cdot(u\nabla v)-\xi\nabla\cdot
  (u\nabla w)+\mu u(1-u-w),\quad
x\in \Omega, t>0,\\
 \disp{\tau v_t=\Delta v +u- v},\quad
x\in \Omega, t>0,\\
\disp{w_t=- vw },\quad
x\in \Omega, t>0,\\
 \end{array}\right.\label{7101.sdddd2x19ssddx3189}
\end{equation}
where $D$, $\chi,\xi$ and $\mu$ are the cancer cell random
motility, the  chemotactic coefficients, the  haptotactic coefficients and the proliferation rate of the cells,
respectively.
Here $\tau\in\{0,1\},$
 $\Omega \subset \mathbb{R}^N (N\geq1)$ is the physical domain which we assume to be bounded with
smooth boundary, and  the unknown quantities $u$, $v$ and $w$ represent the density of
cancer cells, the concentration of  matrix degrading enzymes (MDE) and the density of extracellular matrix (ECM),
respectively.

As a subsystem, \dref{7101.sdddd2x19ssddx3189} contains the celebrated Keller--Segel (\cite{Keller2710}) chemotaxis system  (with logistic source, $\mu\neq0$)
 \begin{equation}\label{x1.7dffddffff31426677gghhhjgg}
 \left\{\begin{array}{ll}
  u_t=\nabla\cdot(D\nabla u)-\chi\nabla\cdot(u\nabla v)+\mu u(1-u),\quad
x\in \Omega, t>0,\\
 \disp{\tau v_t=\Delta v +u- v},\quad
x\in \Omega, t>0\\
 \end{array}\right.
\end{equation}
by setting $w\equiv 0$.
 Over the last four decades, there is a wide variety of patterns associated Keller--Segel system \dref{x1.7dffddffff31426677gghhhjgg}  have been studied extensively, and the main interest lies in whether the solution
is global or blow up (see e.g., Cie\'{s}lak \cite{Cie791}, Cie\'{s}lak and Winkler \cite{Cie72}, Ishida et al. \cite{Ishida}, Painter and Hillen \cite{Painter79}, Winkler \cite{Winkler37103,Winkler79312,Winkler79312,Winkler715}, Li and  Xiang \cite{LiLittffggsssdddssddxxss}, Tello and  Winkler \cite{Tello710}, Wang et al. \cite{Wang76}, Zheng et al. \cite{Zhengssdddssddddkkllssssssssdefr23}).
In fact, if $\mu=0,$
the two behaviors (boundedness and blow-up)
of solutions strongly depend on the space dimension and the total mass of cells (\cite{Biler444710,Horstmann79,Horstmann791,Winkler792}).
When $\tau = 0$, 
Tello and  Winkler (\cite{Tello710})  mainly proved that
the global boundedness for model \dref{7101.sdddd2x19ssddx3189} exists
 under the condition $\mu > \frac{(N-2)^+}{N}\chi$, moreover,  they gave the  weak solutions
 for arbitrary small $\mu > 0$.  Kang and Stevens \cite{Kangaffrk}
  (see also \cite{Xiangssffrk,Huaffrk})  improve
the results of Tello and  Winkler (\cite{Tello710})
 to the case $ \mu \geq {\bf\frac{(N-2)_+}{N}\chi}$.
While if $\tau=1$  and 
$\mu>\frac{(N-2)_+}{N}\chi C_{\frac{N}{2}+1}^{\frac{1}{\frac{N}{2}+1}}$ (where $C_{\frac{N}{2}+1}$ is a positive constant), Zheng (\cite{Zhengssdddssddddkkllssssssssdefr23}) proved that
for any sufficiently smooth initial data, the corresponding initial-boundary
value problem for \dref{x1.7dffddffff31426677gghhhjgg} possesses a globally defined bounded solution,
 which give the lower bound estimation for the logistic source, so that, improves the result of \cite{Winkler37103}.
Furthermore,  some recent studies have shown that
the blow-up of solutions can be inhibited by the nonlinear diffusion (see   {Ishida} et al. \cite{Ishida},  Winkler et al. \cite{Bellomo1216,Tao794,Zheng33312186,Zheng0,Winkler79,Winkler72}) and nonlinear logistic term (see \cite{Zheng0,Zhengsssddswwerrrseedssddxxss}).

There have been large literature on the global existence and the large time behavior of
solutions to the system \dref{7101.sdddd2x19ssddx3189}. We refer to \cite{Cao,Marciniak,Tao2,Taox26,Taox26216,Taoddfvbb41215,Zhengddfggghjjkk1} and the references therein.
%
%
In fact, when $\tau = 0$, MDEs diffuses much faster than cells (see \cite{Jger317,Taox26216}), 
Tao and Wang
\cite{Tao2} proved that model \dref{7101.sdddd2x19ssddx3189} possesses a unique global bounded classical solution for
any $\mu > 0$ in two space dimensions, and for large $\mu > 0$ in three space dimensions.
In \cite{Taox26216},Tao and Winkler improved the condition on $\mu$ (${\bf \mu > \frac{(N-2)^+}{N}\chi}$), so that it coincides with the best one known for
the parabolic-elliptic Keller-Segel system \dref{x1.7dffddffff31426677gghhhjgg} (see Tello and  Winkler\cite{Tello710}), moreover, in additional explicit
smallness on $w_0$, they gave the exponential decay of $w$ in the large time limit. However,   this problem is left open for
the critical case ${\bf \mu =\frac{(N-2)^+}{N}\chi}$.
While, if $\tau=1$,  
refined
approaches involving a more subtle analysis of \dref{7101.sdddd2x19ssddx3189}, 
Tao (\cite{Taox3201}) and  Cao  (\cite{Cao}) obtained
the boundedness  of global solution for the  2D and 3D space respectively, especially, for the 3D
space, similar to the chemotaxis-only system (\cite{Zhengssdddssddddkkllssssssssdefr23,Winkler37103}), the global solution is obtained only for large $\mu$,
and it remains open for small $\mu$.

The diffusion of cancer cell may depend nonlinearly on their densities (\cite{Hillen79,Szymaska,Tao72}), and so we are led to consider the cell motility $D$ as
a nonlinear function of the cancer cell density, $D\equiv   D(u)= C_ D(u + 1)^{m-1}, m \in\mathbb{R},  C_{D} > 0$.
 Introducing
this into the model \dref{7101.sdddd2x19ssddx3189} leads to the following chemotaxis-haptotaxis system with {\bf nonlinear diffusion}
\begin{equation}
 \left\{\begin{array}{ll}
  u_t=\nabla\cdot( D(u)\nabla u)-\chi\nabla\cdot(u\nabla v)-\xi\nabla\cdot
  (u\nabla w)+\mu u(1-u-w),\quad
x\in \Omega, t>0,\\
 \disp{\tau v_t=\Delta v +u- v},\quad
x\in \Omega, t>0,\\
\disp{w_t=- vw },\quad
x\in \Omega, t>0,\\
 \disp{D(u)\frac{\partial u}{\partial \nu}-\chi\frac{\partial v}{\partial \nu}-\xi\frac{\partial w}{\partial \nu}=\frac{\partial v}{\partial \nu}=0},\quad
x\in \partial\Omega, t>0,\\
\disp{u(x,0)=u_0(x)},\tau v(x,0)=\tau v_0(x),w(x,0)=w_0(x),\quad
x\in \Omega,\\
 \end{array}\right.\label{1.1}
\end{equation}
where  $u,v$ and $w$ are denoted as before,
 $\mu\geq0,\tau\in\{0,1\}$,
the diffusion function $D(u)$
 fulfills
\begin{equation}\label{9161}
D\in  C^{2}([0,\infty))
\end{equation}
and there
exist  constants $m\in \mathbb{R}$ and  $C_{D}$
such that
\begin{equation}\label{9162}
 D(u) \geq C_{D}(u+1)^{m-1}~ \mbox{for all}~ u\geq0.
\end{equation}
This parabolic-parabolic-ODE
system ($\tau=1$ in \dref{1.1}) and its parabolic-elliptic-ODE simplifications ($\tau=0$ in \dref{1.1}) have been objects of extensive studies in recent decades.
In fact, in $N=2$, Zheng et al. (\cite{Zhenghhjjghjjkk1}) studied the global boundedness for model \dref{1.1}
with $D$ satisfies \dref{9161}--\dref{9162} and
$m>1$, moreover, in additional explicit
smallness on $w_0$, they gave the exponential decay of $w$ in the large time limit.
Moreover, if $D$ satisfies \dref{9161}--\dref{9162} with
$m>\max\{1,\bar{m}\}$
and
\begin{equation}\label{dcfvgg7101.2x19x318jkl}
\bar{m}:=\left\{\begin{array}{ll}
\frac{2N^2+4N-4}{N(N+4)}~~\mbox{if}~~
N \leq 8,\\
 \frac{2N^2+3N+2-\sqrt{8N(N+1)}}{N(N+20)}~~\mbox{if}~~
N \geq 9,\\
 \end{array}\right.
\end{equation}
Tao and Winkler (\cite{Tao72}) proved that model \dref{1.1} possesses at least one nonnegative
global classical solution, however, their boundedness is left as an open problem.
Using the boundedness of $\int_{\Omega}|\nabla v|^{l}(1\leq l<\frac{N}{N-1})$,
 Wang (\cite{Wangscd331629}) and Li, Lankeit (\cite{Li445666})  proved that
 the
global solvability and boundedness  of classical solution  (or weak solution) for
any 
 $D$ satisfies \dref{9161}--\dref{9162} and
$m>2-\frac{2}{N}$.
Recently, Zheng (\cite{Zhddengssdeeezseeddd0}) and Jin (\cite{Jinhhyyyyy}) extended these results to the case $m > \frac{2N}{N+2}$ and $m>0$ (as well as  large $\mu$), respectively.  But the cases
$m\leq0$ remain unknown.
Other variants of the model that are commonly treated including
the (nonlinear) logistic
types and  the re-establishment of ECM components, please refer to
\cite{Stinnerddff12,PangPang1,Tao79477,Zhenssdssdddfffgghjjkk1}, etc, and
references therein. Thus it is meaningful to analyze the following question:

$(Q)$:  Which size of $m$, $\chi,\xi$ and $\mu$ are sufficient to ensure boundedness of solutions to \dref{1.1}?

It is our goal in this work to give answers to $(Q)$.
%
%
 To the best of our knowledge, this is the first result which gives  a {\bf explicit condition}  between $m,\chi,\xi$ and $\mu$ that yields to the boundedness
   of the solution.

Motivated by the above works,
  the aim of the present paper is to
   study the quasilinear parabolic--elliptic--ODE ($\tau=0$ in \dref{1.1}) and parabolic--parabolic--ODE ($\tau=1$ in \dref{1.1})  chemotaxis--haptotaxis model  \dref{1.1} under the conditions \dref{9161}--\dref{9162}. Our main result is the following:

   \begin{theorem}\label{theorem3}Let  $\Omega\subset \mathbb{R}^N(N\geq1)$ be
 a bounded domain with smooth boundary $\partial\Omega$. Assume
that $D$   satisfy \dref{9161}--\dref{9162} and the initial data $(u_0,w_0)$ fulfills
 \begin{equation}\label{aassx1.731426677gg}
\left\{
\begin{array}{ll}
\displaystyle{u_0\in C(\bar{\Omega})~~\mbox{with}~~u_0\geq0~~\mbox{in}~~\Omega~~\mbox{and}~~u_0\not\equiv0},\\
\displaystyle{w_0\in C^{2+\vartheta}(\bar{\Omega})~~\mbox{with}~~w_0\geq0~~\mbox{in}~~\bar{\Omega}~~\mbox{and}~~\frac{\partial w_0}{\partial\nu}=0~~\mbox{on}~~\partial\Omega} ~~~~~~~~~~~~~~~~~~~~~~~~~~~~~~~~~~~~~~~~~~~~~~~~~~~~~~~~~~~~~~~~~~~\\
\end{array}
\right.
\end{equation}
with some $\vartheta\in(0,1).$

If one of the following cases holds:

  (i)~~$m\geq2-\frac{2}{N}\frac{\chi}{(\chi-\mu)_{+}}$ with $0<\mu<\chi$;

   (ii)~~$m>2-\frac{2}{N}\frac{\chi}{(\chi-\mu)_{+}}$  with $\mu\geq\chi$;

  (iii)~~$m>2-\frac{2}{N}$ with $\mu=0$;

(iv)~~$m=2-\frac{2}{N}$ and $C_{D} >\frac{C_{GN}(1+\|u_0\|_{L^1(\Omega)})^3}{4}(2-\frac{2}{N})^2\chi$;

%
%
%
%
then there exists a triple $(u,v,w)\in (C^0(\bar{\Omega}\times[0,\infty))\cap C^{2,1}
(\bar{\Omega}\times(0,\infty)))^3$ which solves \dref{1.1} in the classical sense. Here $C_{GN}$ is a  positive   constant which is corresponding to the Gagliardo--Nirenberg  inequality (see Lemma \ref{lemma41}).
Moreover, both $u$, $v$  and $w$ are bounded in $\Omega\times(0,\infty)$.

\end{theorem}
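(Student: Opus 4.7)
My plan is to combine a standard local existence/extensibility result with an a priori bootstrap of $L^p$-bounds on $u$, following the chain $L^1(\Omega)\hookrightarrow L^{\lambda-\varepsilon}(\Omega)\hookrightarrow L^\lambda(\Omega)\hookrightarrow L^{\lambda+\varepsilon}(\Omega)\hookrightarrow L^p(\Omega)$ for every finite $p$, and finishing with a Moser-type upgrade to $L^\infty$. Throughout, $\lambda=\chi/(\chi-\mu)_+$, with the convention $\lambda=+\infty$ in cases (ii)--(iv), in which event the logistic-driven portion of the chain collapses and only the nonlinear-diffusion step really matters. Preparations are standard: local existence with a maximal time $T_{\max}\in(0,\infty]$ together with the extensibility criterion ``$\sup_{[0,T_{\max})}\|u(\cdot,t)\|_{L^\infty(\Omega)}<\infty\Rightarrow T_{\max}=\infty$''; the pointwise bound $0\le w\le\|w_0\|_{L^\infty(\Omega)}$ from $w_t=-vw$; a uniform $L^1$-bound on $u$ obtained by integrating the first equation against $1$ and exploiting the logistic dissipation; and $L^\infty$-control of $\nabla w$ derived by differentiating $w_t=-vw$ in $x$ and using elliptic regularity for $-\Delta v+v=u$ to handle $\nabla v$.

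The heart of the argument is the $L^p$-estimate. Testing the first equation with $(u+1)^{p-1}$ and integrating by parts yields, thanks to $D(u)\ge C_D(u+1)^{m-1}$, an inequality of the form
\[
\frac{1}{p}\frac{d}{dt}\!\int_\Omega (u+1)^p + \frac{4(p-1)C_D}{(m+p-1)^2}\!\int_\Omega \bigl|\nabla (u+1)^{(m+p-1)/2}\bigr|^2 \le I_\chi+I_\xi+I_\mu,
\]
where for $\tau=0$ the crucial chemotaxis contribution is rewritten via $\Delta v=v-u$ as $I_\chi = -\frac{\chi(p-1)}{p}\!\int_\Omega u^p\Delta v$, producing a leading-order term $\frac{\chi(p-1)}{p}\!\int_\Omega u^{p+1}$; the logistic term yields $-\mu\!\int_\Omega u^{p+1}$; and $I_\xi$ is absorbed into the diffusion via Young's inequality and the $\nabla w$-bound. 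The net coefficient on $\int_\Omega u^{p+1}$ is $\frac{\chi(p-1)}{p}-\mu$, which is strictly negative precisely when $p<\lambda$, so starting from the $L^1$-bound I can close the estimate for every $p<\lambda$ and hence obtain $u\in L^{\lambda-\varepsilon}(\Omega)$ uniformly in time for all small $\varepsilon>0$.

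At the critical exponent $p=\lambda$ the logistic mechanism is exhausted and the nonlinear diffusion must take over. Setting $\varphi:=(u+1)^{(m+p-1)/2}$ and invoking the Gagliardo--Nirenberg inequality (Lemma~\ref{lemma41}) controls $\int_\Omega u^{p+1}$ by $\|\nabla\varphi\|_{L^2}^{2\theta}\|\varphi\|_{L^{2p/(m+p-1)}}^{\alpha}+C$ with interpolation exponents determined by $N$, $m$ and $p$; a direct calculation shows that the hypothesis $m\ge 2-\frac{2}{N}\lambda$ is exactly what brings the resulting Dirichlet-energy exponent $\theta$ down to (at most) $1$, enabling absorption of the right-hand side into the diffusive term on the left. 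In the borderline case (iv) one has $\theta=1$ and the numerical prefactor matters, which is precisely why the explicit lower bound $C_D>\frac{C_{GN}(1+\|u_0\|_{L^1(\Omega)})^3}{4}(2-\frac{2}{N})^2\chi$ is imposed. Once $u\in L^{\lambda+\varepsilon}(\Omega)$, the diffusion dominates the chemotactic production term uniformly in $p$, and re-running the same test-function argument propagates the bound to every finite $p$. Feeding $u\in L^p$ with $p>N/2$ into elliptic regularity for $-\Delta v+v=u$ yields $\nabla v\in L^\infty$; a Moser iteration on the $u$-equation then upgrades this to a uniform $L^\infty$-bound; the extensibility criterion forces $T_{\max}=\infty$; and parabolic Schauder theory delivers the claimed classical regularity and boundedness of $(u,v,w)$.

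The main obstacle is the critical passage $L^\lambda\to L^{\lambda+\varepsilon}$: neither the logistic damping nor the nonlinear diffusion alone suffices at $p=\lambda$, and the two must be balanced with quantitative control of the Gagliardo--Nirenberg constant. This balance is exactly what pins down the sharp threshold $m=2-\frac{2}{N}\lambda$ and the explicit $C_D$-condition in case (iv), and it is what allows a single unified bootstrap to cover the four regimes (i)--(iv) simultaneously.
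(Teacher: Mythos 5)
Your overall skeleton (local existence plus extensibility, $L^1$ control, the chain $L^{\lambda-\varepsilon}\to L^{\lambda}\to L^{\lambda+\varepsilon}\to L^p$ driven by the sign of $\frac{(p-1)}{p}\chi-\mu$ and by Gagliardo--Nirenberg absorption into the diffusion, then Moser iteration) is exactly the paper's route for $\tau=0$. However, there is a genuine gap in your treatment of the haptotaxis term. You propose, as a preparation, an $L^\infty$ bound on $\nabla w$ ``by differentiating $w_t=-vw$ in $x$ and using elliptic regularity for $-\Delta v+v=u$ to handle $\nabla v$'', and then to absorb $I_\xi$ via Young's inequality and this $\nabla w$ bound. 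This is circular: at the start of the bootstrap you only know $u\in L^1(\Omega)$, so elliptic regularity gives $\nabla v\in L^q(\Omega)$ only for $q<\frac{N}{N-1}$, nowhere near $L^\infty$; an $L^\infty$ bound on $\nabla v$ (hence on $\nabla w$) is precisely what becomes available only \emph{after} the $L^p$ bounds for large $p$ that you are trying to prove. The paper avoids this entirely: integrating by parts it writes $-\xi\int_\Omega\nabla\cdot(u\nabla w)u^{k-1}=-\frac{(k-1)\xi}{k}\int_\Omega u^k\Delta w$ and invokes the pointwise estimate of Lemma \ref{lemm3a}, $-\Delta w\le \tau\|w_0\|_{L^\infty(\Omega)}v+\kappa$ (so $-\Delta w\le\kappa$ when $\tau=0$), which is obtained from the explicit ODE representation $w=w_0\exp(-\int_0^t v)$ and needs no regularity of $u$ at all; the haptotaxis contribution then enters only as the harmless lower-order term $\kappa\xi\int_\Omega u^k$. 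Without this (or an equivalent device) your differential inequality cannot be closed, so the step is not a cosmetic omission but a missing idea.

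A secondary imprecision: at the critical exponent $p=\lambda$ you say you control $\int_\Omega u^{p+1}$ by Gagliardo--Nirenberg, but at that stage you only hold the $L^{\lambda-\varepsilon}$ bound, and in the equality case $m=2-\frac{2}{N}\lambda$ (allowed in case (i)) the resulting Dirichlet exponent exceeds $2$, so absorption fails. The paper's two-step structure is essential here: at $k=\lambda$ the coefficient of $\int_\Omega u^{k+1}$ vanishes exactly, so only the lower-order term $\rho_0\int_\Omega u^{k}$ must be interpolated (using the $L^{k_0}$ bound with $k_0>\max\{1,\lambda-\frac N2\}$, see Lemma \ref{lemma45630}), and only afterwards, for $k$ slightly above $\lambda$, is $\int_\Omega u^{k+1}$ handled, the smallness of $\frac{(k-1)}{k}\kappa_0-\mu$ as $k\downarrow\lambda$ supplying the room for absorption (Lemma \ref{lessdddmma45630}); the explicit $C_D$ condition is needed only in case (iv). Your sketch attributes the prefactor issue solely to case (iv) and would not, as written, cover the equality case of (i).
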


Before we prove theorem \ref{theorem3}, there exist  a few remarks in order.
\begin{remark}
(i) 
 Theorem \ref{theorem3}  extends the results of  Theorem 1.1  of Tao and  Winkler (\cite{Taox26216})
for the critical case $\mu=\frac{(N-2)_{+}}{N}\chi$ and $D(u)=1.$

(ii) If $\mu=0,$ in comparison to the result
for the corresponding haptotaxis-free system (\cite{Winkler72}, $w\equiv0$), it is easy to see that the restriction on $m$ here is
{\bf optimal}.


(iii) Observing that if $\mu\geq\frac{(N-2)_{+}}{N}\chi$ and $w\equiv0$, then
$2-\frac{2}{N}\frac{\chi}{(\chi-\mu)_{+}}<1$, therefore,  Theorem \ref{theorem3} also
extends the results of  Theorem 1.1  of Tello and  Winkler (\cite{Tello710}). 

(iv) Obviously, if $\mu>\chi$, then
$2-\frac{2}{N}\frac{\chi}{(\chi-\mu)_{+}}<1$, so that,  Theorem \ref{theorem3}
extends the results of  Theorem 1.1  of Tao and  Winkler (\cite{Taox26}).

(v) Obviously, if $w\equiv0$ and $\mu>0,$ then
$2-\frac{2}{N}\frac{\chi}{(\chi-\mu)_{+}}<2-\frac{2}{N}$, so that,  Theorem \ref{theorem3} also
partly extends the results of  Theorem 1.1  of Wang et al. (\cite{Wang76}).

(vi) Obviously, if $w\equiv0$,  $\mu\geq\frac{(N-2)_{+}}{N}\chi$ and $D(u)\equiv1$, then
$2-\frac{2}{N}\frac{\chi}{(\chi-\mu)_{+}}\leq1$, so that,  Theorem \ref{theorem3} is consistent with the results of Theorem 3 of  \cite{Kangaffrk}.

\end{remark}

\begin{theorem}\label{dfffftheorem3} Let  $\Omega\subset \mathbb{R}^N(N\geq1)$ be
 a bounded domain with smooth boundary $\partial\Omega$.
Assume
that $D$   satisfy \dref{9161}--\dref{9162} and the initial data $(u_0,v_0,w_0)$ fulfills
 \begin{equation}\label{x1.731426677gg}
\left\{
\begin{array}{ll}
\displaystyle{u_0\in W^{1,\infty}(\Omega)~~\mbox{with}~~u_0\geq0~~\mbox{in}~~\Omega~~\mbox{and}~~u_0\not\equiv0},\\
\displaystyle{v_0\in W^{1,\infty}(\Omega)~~\mbox{with}~~v_0\geq0~~\mbox{in}~~\Omega~~\mbox{and}~~\frac{\partial v_0}{\partial\nu}=0~~\mbox{on}~~\partial\Omega},\\
\displaystyle{w_0\in C^{2+\vartheta}(\bar{\Omega})~~\mbox{with}~~w_0\geq0~~\mbox{in}~~\bar{\Omega}~~\mbox{and}~~\frac{\partial w_0}{\partial\nu}=0~~\mbox{on}~~\partial\Omega} ~~~~~~~~~~~~~~~~~~~~~~~~~~~~~~~~~~~~~~~~~~~~~~~~~~~~~~~~~~~~~~~~~~~\\
\end{array}
\right.
\end{equation}
with some $\vartheta\in(0,1).$
  If one of the following cases holds:

(i)~~$m\geq2-\frac{2}{N}\frac{\max_{s\geq1}\lambda_0^{\frac{1}{{{s}}+1}}(\chi+\xi\|w_0\|_{L^\infty(\Omega)})}{[\max_{s\geq1}\lambda_0^{\frac{1}{{{s}}+1}}(\chi+\xi\|w_0\|_{L^\infty(\Omega)})-\mu]_{+}}$  with $0<\mu<\max_{s\geq1}\lambda_0^{\frac{1}{{{s}}+1}}(\chi+\xi\|w_0\|_{L^\infty(\Omega)})$;

(ii)~~$m>2-\frac{2}{N}\frac{\max_{s\geq1}\lambda_0^{\frac{1}{{{s}}+1}}(\chi+\xi\|w_0\|_{L^\infty(\Omega)})}{[\max_{s\geq1}\lambda_0^{\frac{1}{{{s}}+1}}(\chi+\xi\|w_0\|_{L^\infty(\Omega)})-\mu]_{+}}$ with $\mu\geq\max_{s\geq1}\lambda_0^{\frac{1}{{{s}}+1}}(\chi+\xi\|w_0\|_{L^\infty(\Omega)})$;

(iii)~~$m>2-\frac{2}{N}$ with $\mu=0$;

(iv)~~$m=2-\frac{2}{N}$ and $C_{D} >\frac{C_{GN}(1+\|u_0\|_{L^1(\Omega)})^3}{4}(2-\frac{2}{N})^2\max_{s\geq1}\lambda_0^{\frac{1}{{{s}}+1}}(\chi+\xi\|w_0\|_{L^\infty(\Omega)})$;
%
%
%
%
%
%

then there exists a pair $(u,v,w)\in (C^0(\bar{\Omega}\times[0,\infty))\cap C^{2,1}(\bar{\Omega}\times(0,\infty))^2$ which solves \dref{1.1} in the classical sense, where $C_{GN}$ and $\lambda_0:=\lambda_0(\gamma)$ are the  constants which are corresponding to the Gagliardo--Nirenberg  inequality (see Lemma \ref{lemma41}) and the maximal Sobolev
regularity (see Lemma \ref{lemma45xy1222232}), respectively.
Moreover, both $u$ and $v$ are bounded in $\Omega\times(0,\infty)$.
\end{theorem}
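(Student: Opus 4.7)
The plan is to parallel the $\tau = 0$ scheme of Theorem \ref{theorem3}, with the elliptic identity $-\Delta v + v = u$ replaced by the time-integrated maximal Sobolev regularity estimate of Lemma \ref{lemma45xy1222232} for the parabolic $v$-equation. First I would record local existence of a classical solution $(u,v,w)$ on a maximal interval $(0, T_{max})$ via a standard Banach fixed-point argument, together with the usual extensibility criterion: a time-uniform bound on $\|u(\cdot,t)\|_{L^\infty(\Omega)}$ forces $T_{max} = \infty$. Two cheap preliminary bounds are immediate. The ODE $w_t = -vw$ with $v \geq 0$ forces $0 \leq w(\cdot,t) \leq \|w_0\|_{L^\infty(\Omega)}$ pointwise, and its gradient identity shows that $\partial_\nu w = 0$ on $\partial\Omega$ for all $t > 0$ (since $\partial_\nu w_0 = \partial_\nu v = 0$), so that integration by parts against $\nabla w$ is clean. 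Integrating the $u$-equation over $\Omega$ yields a time-uniform bound on $\|u(\cdot,t)\|_{L^1(\Omega)}$ via the logistic term (or, when $\mu = 0$, from mass conservation combined with the Neumann structure).

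The central step is the $L^p$-energy identity obtained by testing the $u$-equation against $u^{p-1}$ and integrating by parts twice:
$$\frac{1}{p}\frac{d}{dt}\int_\Omega u^p + (p-1)\int_\Omega D(u)u^{p-2}|\nabla u|^2 = -\frac{p-1}{p}\int_\Omega u^p(\chi\Delta v + \xi\Delta w) + \mu\int_\Omega u^p(1-u-w).$$
By hypothesis \eqref{9162} the diffusion term dominates a positive multiple of $\|\nabla u^{(p+m-1)/2}\|_{L^2(\Omega)}^2$. The chemotactic production would be treated by time-integration, Hölder's inequality, and the maximal Sobolev regularity of Lemma \ref{lemma45xy1222232} applied to $v_t - \Delta v + v = u$, producing an effective prefactor $\chi\,\lambda_0(p+1)^{1/(p+1)}$ on $\int_0^T \|u(\cdot,t)\|_{L^{p+1}(\Omega)}^{p+1}\,dt$. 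The haptotactic contribution $\xi \int u^p \Delta w$ is handled indirectly: differentiating the ODE twice yields $(\Delta w)_t = -v\Delta w - 2\nabla v \cdot \nabla w - w\Delta v$, and Gronwall-type arguments bound $\Delta w$ in the relevant spatio-temporal norms by the corresponding norms of $v$, with amplitude factor $\|w_0\|_{L^\infty(\Omega)}$. The two productions then combine into a single effective prefactor $(\chi + \xi\|w_0\|_{L^\infty(\Omega)})\lambda_0(p+1)^{1/(p+1)}$, whose supremum over admissible $p=s \geq 1$ equals the $\kappa_0$ appearing in the statement. The logistic term $-\mu \int u^{p+1}$ absorbs this production directly when $\mu \geq \kappa_0$; in the subcritical regime $0 < \mu < \kappa_0$ one applies Young's inequality with an $\varepsilon$-splitting and closes via the Gagliardo--Nirenberg inequality of Lemma \ref{lemma41} whenever $m \geq 2 - \frac{2}{N}\lambda$, or, in the critical case (iv), by comparing $C_D$ directly against the product of the Gagliardo--Nirenberg prefactor and $\kappa_0$, which is exactly the stated quantitative inequality.

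Iterating this estimate along the chain $L^1(\Omega) \to L^{\lambda - \varepsilon}(\Omega) \to L^\lambda(\Omega) \to L^{\lambda + \varepsilon}(\Omega) \to L^p(\Omega)$ for arbitrary $p < \infty$ then yields uniform $L^p$-bounds on $u$, after which Moser iteration — or, alternatively, heat-semigroup bootstrapping for the $v$-equation to gain $\nabla v \in L^\infty(\Omega\times(0,\infty))$ followed by a standard $L^\infty$-estimate for the $u$-equation — upgrades this to a time-uniform bound on $\|u(\cdot,t)\|_{L^\infty(\Omega)}$, whence $T_{max} = \infty$ and the claimed global boundedness of $(u,v,w)$. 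The hard part will be the critical step $L^\lambda \to L^{\lambda + \varepsilon}$: the absorption holds only on a knife-edge, the constants from Gagliardo--Nirenberg and from maximal Sobolev regularity enter multiplicatively together with $\mu$, and the four case splits (i)--(iv) are designed precisely to secure closure across different parameter regimes. A secondary technical subtlety is the $\max_{s \geq 1}$ appearing in $\kappa_0$: since $\lambda_0(s+1)$ depends on $s$, careful tracking of this supremum is required when invoking Lemma \ref{lemma45xy1222232} at varying exponents throughout the iteration.
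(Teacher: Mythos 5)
Your overall architecture coincides with the paper's (testing with $u^{p-1}$, exponential-in-time integration, maximal Sobolev regularity from Lemma \ref{lemma45xy1222232} with the Young-optimized prefactor $\frac{k-1}{k}\lambda_0^{1/(k+1)}(\chi+\xi\|w_0\|_{L^\infty(\Omega)})\le\kappa_0$, then Gagliardo--Nirenberg and Moser iteration), but two steps are genuinely missing. First, your treatment of the haptotaxis term is not viable as stated: Gronwalling the identity $(\Delta w)_t=-v\Delta w-2\nabla v\cdot\nabla w-w\Delta v$ in spatio-temporal norms requires control of $\nabla v\cdot\nabla w$ and $\Delta v$ that is not available at this stage (it is exactly what the maximal-regularity step is being used to produce), and it would not deliver the clean amplitude $\xi\|w_0\|_{L^\infty(\Omega)}$ on which the sharp constant $\kappa_0$ depends. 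The paper instead invokes the pointwise one-sided bound $-\Delta w\le \|w_0\|_{L^\infty(\Omega)}v+\kappa$ of Lemma \ref{lemm3a}, which comes from the explicit representation $w=w_0e^{-\int_0^t v}$ together with a completing-the-square cancellation of the $|\int_0^t\nabla v|^2$ term; some such sign structure is indispensable, and a norm-level Gronwall argument does not reproduce it.

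Second, you correctly identify the critical passage $L^{\lambda}\to L^{\lambda+\varepsilon}$ as ``the hard part,'' but you do not supply the mechanism, and this is precisely the paper's new content. Two separate devices are needed: (a) to reach the endpoint $k=\lambda$ itself when $m=2-\frac{2}{N}\lambda$ (part of case (i)), the production coefficient $\frac{k-1}{k}\kappa_0-\mu$ vanishes and one is left with the lower-order term $\rho_0\int_\Omega u^{k}$; interpolating this against the conserved $L^1$ norm via Gagliardo--Nirenberg is in general insufficient, so the paper first establishes a bound in $L^{k_0}(\Omega)$ for some $k_0\in(\max\{1,\lambda-\frac N2\},\lambda)$ (Step 1 of Lemma \ref{qqqqlemma45630}) and then interpolates against that, the condition $k_0>\lambda-\frac N2$ being exactly what makes the Gagliardo--Nirenberg exponent strictly less than $2$ so the dissipation absorbs it; (b) to go strictly above $\lambda$, the paper (Lemma \ref{lessdddmma45630}) compares the dissipation coefficient $\frac{4C_D(k-1)}{(m+k-1)^2}$ with $(\frac{k-1}{k}\kappa_0-\mu)C_4$ and uses its strict positivity at $k=\lambda$ plus continuity in $k$ to choose some $\alpha_0>\lambda$, after which arbitrary $p$ follows. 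Your proposal asserts that ``the case splits are designed to secure closure'' but gives neither the intermediate-exponent trick (a) nor the perturbation argument (b), so the borderline cases --- which are the theorem's raison d'\^etre --- remain unproved in your outline; the subcritical ($k<\lambda$) and supercritical-damping ($\mu\ge\kappa_0$) regimes you describe are the easy ones and agree with Lemmas \ref{3455667lemma45630} and \ref{qqqqlemma45630}.
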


\begin{remark}
(i) 
Obviously, if $\mu>\frac{(N-2)_{+}}{N}\chi \max_{s\geq1}\lambda_0^{\frac{1}{{{s}}+1}},$
$2-\frac{2}{N}\frac{\max_{s\geq1}\lambda_0^{\frac{1}{{{s}}+1}}(\chi+
\xi\|w_0\|_{L^\infty(\Omega)})}{[\max_{s\geq1}\lambda_0^{\frac{1}{{{s}}+1}}(\chi+\xi\|w_0\|_{L^\infty(\Omega)})-\mu]_{+}}<1$, hence, Theorem \ref{dfffftheorem3}
extends the results of   Ke and Zheng (\cite{Zhengddfggghjjkk1}) and  partly extends the result of Liu et al (\cite{Liughjj791}).

(ii) 
Obviously, if $\mu>0$,
$2-\frac{2}{N}\frac{\max_{s\geq1}\lambda_0^{\frac{1}{{{s}}+1}}(\chi+\xi\|w_0\|_{L^\infty(\Omega)})}{[\max_{s\geq1}\lambda_0^{\frac{1}{{{s}}+1}}(\chi+\xi\|w_0\|_{L^\infty(\Omega)})-\mu]_{+}}<2-\frac{2}{N}$, hence Theorem \ref{dfffftheorem3}
extends the results of   Wang (\cite{Wangscd331629}) and Li and Lankeit
 (\cite{Li445666}).

(iii) 
 Theorem \ref{dfffftheorem3}  extends the results of   Zheng et al.  (\cite{Zhengssddff0})
for the critical case $ \mu \geq \frac{(N-2)_+}{N}\chi$ as well as $w\equiv0$ and $D(u)=1.$

(iv) 
 If $\mu=0$ and $w\equiv0$, then \dref{1.1}
 possess some solutions which blow up in finite time provided that
$D$   satisfy \dref{9161}--\dref{9162} with $m<2-\frac{2}{N}$ (see e.g. \cite{Winkler79,Cie791}). Therefore,
In comparison to the result
for the corresponding haptotaxis-free system ($w\equiv0$ in \dref{1.1}), it is easy to see that the restriction on $m$ here is
{\bf optimal}. 

(v) If $N=2$, then $$
2-\frac{2}{N}\frac{\max_{s\geq1}
\lambda_0^{\frac{1}{{{s}}+1}}(\chi+\xi\|w_0\|_{L^\infty(\Omega)})}{[\max_{s\geq1}\lambda_0^{\frac{1}{{{s}}+1}}(\chi+\xi\|w_0\|_{L^\infty(\Omega)})-\mu]_{+}}=
2-\frac{\max_{s\geq1}
\lambda_0^{\frac{1}{{{s}}+1}}(\chi+\xi\|w_0\|_{L^\infty(\Omega)})}{[\max_{s\geq1}\lambda_0^{\frac{1}{{{s}}+1}}(\chi+\xi\|w_0\|_{L^\infty(\Omega)})-\mu]_{+}}<1,$$
therefore, Theorem \ref{theorem3}  extends the results of   Wang et al. (\cite{Zhenghhjjghjjkk1}),
who proved the possibility of global and bounded, in the cases, $D$   satisfies \dref{9161}--\dref{9162} with ${\bf m>1}$.

\end{remark}

The main novelty and difficulty of the paper is how to control the chemotaxis term $\chi\nabla\cdot(u\nabla v)$,  haptotaxis term $\xi\nabla\cdot(u\nabla w)$ and strong degeneracies caused
by system \dref{1.1}. To overcome this difficulty, the purpose of the present paper is to demonstrate   how far an adequate combination of maximal Sobolev regularity theory and
develop {\bf new} $L^p$-estimate techniques (see Lemmas \ref{lemma45630}--\ref{lemmaddffddfffrsedrffffffgg})   can be used to obtain the global existence and boundedness of solutions
to \dref{1.1}.  

The rest of the paper is organized in the following way.
 Section 2 will be concerned with preliminaries, including some basic facts and a local existence result. In section 3, by careful analysis,  this paper develops some $L^p$-estimate techniques to raise the a priori
estimate of a solution from $L^1 (\Omega) \rightarrow L^{\lambda-\varepsilon} (\Omega) \rightarrow L^{\lambda} (\Omega)\rightarrow
L^{\lambda+\varepsilon} (\Omega)\rightarrow L^{p} (\Omega)(~\mbox{for all}~~p>1)$,
where $$
 \lambda=\frac{\kappa_0}{(\kappa_0-\mu)_+}
$$
and
 \begin{equation}
 \kappa_0=\left\{\begin{array}{ll}
\max_{s\geq1}\lambda_0^{\frac{1}{{{s}}+1}}(\chi+\xi\|w_0\|_{L^\infty(\Omega)})~~\mbox{if}~~
~~ \tau=1,\\
\chi~~~~~~\mbox{if}~~ \tau=0.\\
 \end{array}\right.
\label{113344ddff4dffgg4zjscz2ddd.5297x96302222114}
\end{equation}
To this end,  by using the maximal Sobolev
regularity and the standard estimate for the solution, we may  derive entropy-like inequalities (see \dref{cz2sssssss.5xx1} and \dref{czssddssddffggf2.5kk1214114114rrggkkll}). Then in order to estimate
the right term
$\int_{\Omega}
u^{k+1}$ and $\int_{\Omega}
u^{k}$
 on the rightmost of \dref{cz2sssssss.5xx1} and \dref{czssddssddffggf2.5kk1214114114rrggkkll}, we need to  deal with for  two steps  from   $\|u(\cdot, t)\|_{L^{\lambda-\varepsilon}(\Omega)}\rightarrow\|u(\cdot, t)\|_{L^{\lambda}(\Omega)}$ (see the proof of Lemmas \ref{lemma45630} and \ref{qqqqlemma45630}), which plays a key rule in obtaining the main results. Then  employing a bootstrap
argument
 (see \dref{cz2.5sssssxsdddfffsdddx1} and \dref{cz2sfggg.5sssssxsdddsdddx1}), one could derive the boundedness of $\|u(\cdot, t)\|_{L^{\lambda+\varepsilon}(\Omega)}$ (see Lemma \ref{lessdddmma45630}).  Relying on this, we develop {\bf new} $L^p$-estimate techniques to raise the a priori estimate of solutions from
$L^{\lambda+\varepsilon}(\Omega) \rightarrow L^{p}(\Omega) (\mbox{for all}~p>1)$ (see Lemmas  \ref{lemmasdffssdd45566645630223}--\ref{lemmaddffddfffrsedrffffffgg}).
Finally,
 applying  the standard Alikakos--Moser iteration,  we prove the main results of this paper in the last part.

\section{Preliminaries}
In this section, we will recall  some lemmas and elementary inequalities which
will be used frequently later.
%
%
To begin with, let us collect some basic solution properties which essentially have already been used
in \cite{Keengwwwwssddghjjkk1}.
%

\begin{lemma}(\cite{Friedmangg791})\label{lemma41}
Let $\theta\in(0,p)$.
There exists a positive constant $C_{GN}$ such that for all $u \in W^{1,2}(\Omega)\cap L^\theta(\Omega)$,
$$\|u\|_{L^p(\Omega)} \leq C_{GN}(\|\nabla u\|_{L^2(\Omega)}^{a}\|u\|^{1-a}_{L^\theta(\Omega)}+\|u\|_{L^\theta(\Omega)})$$
is valid with 
$a =\disp{\frac{\frac{N}{\theta}-\frac{N}{p}}{1-\frac{N}{2}+\frac{N}{\theta}}}\in(0,1)$.
%
\end{lemma}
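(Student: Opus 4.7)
The plan is to reduce the statement to the scale-invariant Gagliardo--Nirenberg inequality on $\mathbb{R}^N$ and then to handle the error terms produced by the passage from $\Omega$ to $\mathbb{R}^N$. First I would fix a bounded linear extension operator $E\colon W^{1,2}(\Omega)\to W^{1,2}(\mathbb{R}^N)$, available because $\partial\Omega$ is smooth, which additionally satisfies $\|Eu\|_{L^q(\mathbb{R}^N)}\le c_q\|u\|_{L^q(\Omega)}$ for $q\in\{\theta,p\}$. It then suffices to establish the whole-space inequality
\begin{equation*}
\|v\|_{L^p(\mathbb{R}^N)}\le C\,\|\nabla v\|_{L^2(\mathbb{R}^N)}^{a}\,\|v\|_{L^\theta(\mathbb{R}^N)}^{1-a}\qquad\text{for every }v\in C^\infty_c(\mathbb{R}^N),
\end{equation*}
with exactly the exponent $a$ given in the statement, and then conclude by a standard density argument.

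For the whole-space step, the value of $a$ is forced by dilation invariance: testing a tentative inequality on $v_\lambda(x):=v(\lambda x)$ and equating powers of $\lambda$ on the two sides yields the algebraic identity $N/p=a(N/2-1)+(1-a)N/\theta$, which rearranges to the formula for $a$ given in the lemma. The inequality itself I would prove by combining the Sobolev embedding $W^{1,2}(\mathbb{R}^N)\hookrightarrow L^{2^{\ast}}(\mathbb{R}^N)$ (with the usual modifications for $N\le 2$) with the H\"older interpolation $\|v\|_{L^p}\le\|v\|_{L^{2^{\ast}}}^{a}\|v\|_{L^\theta}^{1-a}$; the hypothesis $a\in(0,1)$ is precisely what makes the interpolation exponents admissible, and composing the two estimates gives the scale-invariant bound.

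Transferring the estimate back to $\Omega$ through the extension $E$ would produce
\begin{equation*}
\|u\|_{L^p(\Omega)}\le C\bigl(\|\nabla u\|_{L^2(\Omega)}+\|u\|_{L^2(\Omega)}\bigr)^{a}\|u\|_{L^\theta(\Omega)}^{1-a},
\end{equation*}
since the $W^{1,2}$-norm on $\mathbb{R}^N$ of $Eu$ is controlled only by the full $W^{1,2}$-norm on $\Omega$. The extra $\|u\|_{L^2(\Omega)}$-contribution is exactly the source of the residual term $\|u\|_{L^\theta(\Omega)}$ in the lemma: interpolating $\|u\|_{L^2(\Omega)}$ between $\|u\|_{L^\theta(\Omega)}$ and $\|u\|_{L^p(\Omega)}$ by H\"older (or, when $\theta>2$, by a direct embedding on the bounded set $\Omega$) and then applying Young's inequality allows the resulting $L^p$-factor to be absorbed on the left, leaving exactly $\|u\|_{L^\theta(\Omega)}$ on the right.

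The main obstacle is the endpoint regime where either $\theta<2$ or $p$ is close to $2^{\ast}$, because in that range the direct Sobolev--H\"older chain either degenerates or produces wrong exponents. The cleanest way around this is Nirenberg's original pointwise identity $|v|^{r}(x)=r\int_{-\infty}^{x_i}|v|^{r-2}v\,\partial_{x_i}v\,d\xi_i$ applied in each coordinate direction, followed by iterated H\"older in the remaining variables; this directly yields the required interpolation for arbitrary admissible pairs $(\theta,p)$ with $a\in(0,1)$, and is the route taken in the monograph (\cite{Friedmangg791}) cited by the authors.
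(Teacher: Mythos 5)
The paper gives no proof of this lemma at all: it is quoted verbatim from Friedman's monograph \cite{Friedmangg791}, so there is no ``paper proof'' to compare against. Your sketch is essentially the standard textbook argument behind that citation (extension to $\mathbb{R}^N$, scale-invariant Gagliardo--Nirenberg on the whole space obtained from Sobolev embedding plus H\"older interpolation, with the exponent $a$ dictated by dilation invariance, and absorption of the lower-order term coming from the extension), and in outline it is correct; the exponent bookkeeping $1/p=a/2^{\ast}+(1-a)/\theta$ is exactly the stated formula for $a$, and your closing remark that the low-regularity/endpoint cases are handled by Nirenberg's pointwise identity is the right fallback for $N\le 2$.

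Two points deserve more care than you give them. First, in this paper the lemma is applied with $\theta=2k_0/(m+k-1)$ and similar exponents, which can be smaller than $1$; the statement indeed allows any $\theta\in(0,p)$, so $\|\cdot\|_{L^\theta}$ is only a quasi-norm. H\"older interpolation and Young's inequality survive this, but your extension operator does not automatically: Stein- or Calder\'on-type operators involve integral averaging and are not bounded on $L^\theta$ for $\theta<1$, so you must take a reflection-plus-cutoff extension (available since $\partial\Omega$ is smooth), which is dominated pointwise by compositions with bi-Lipschitz maps and hence does preserve $L^q$ for every $q>0$; the ``standard density argument'' in $W^{1,2}\cap L^\theta$ with $\theta<1$ also needs a word (truncation and cutoff before mollification). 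Second, your absorption step --- interpolating $\|u\|_{L^2(\Omega)}$ between $L^\theta(\Omega)$ and $L^p(\Omega)$ and absorbing the $L^p$ factor by Young --- only makes sense when $p>2$ (or trivially $p=2$), and the alternative you offer covers only $\theta>2$; the admissible range $a\in(0,1)$ does permit $\theta<p<2$ (e.g.\ $N=3$, $\theta=1/2$, $p=1$), where $L^2$ lies outside the interpolation interval. This is easily repaired --- first prove the estimate for some $q\in(2,2^{\ast}]$, then interpolate $\|u\|_{L^p}$ between $L^\theta$ and $L^q$ and check that the exponents compose to the stated $a$, or run the Nirenberg-identity argument directly --- but as written that subcase falls through the cracks.
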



\begin{lemma}\label{lemma45xy1222232} (\cite{Keengwwwwssddghjjkk1})
Suppose  that $\gamma\in (1,+\infty)$ and $g\in L^\gamma((0, T); L^\gamma(
\Omega))$.
 Consider the following evolution equation
 $$
 \left\{\begin{array}{ll}
v_t -\Delta v+v=g,~~~(x, t)\in
 \Omega\times(0, T ),\\
\disp\frac{\partial v}{\partial \nu}=0,~~~(x, t)\in
 \partial\Omega\times(0, T ),\\
v(x,0)=v_0(x),~~~(x, t)\in
 \Omega.\\
 \end{array}\right.
 $$
 For each $v_0\in W^{2,\gamma}(\Omega)$
such that $\disp\frac{\partial v_0}{\partial \nu}=0$ and any $g\in L^\gamma((0, T); L^\gamma(
\Omega))$, there exists a unique solution
$v\in W^{1,\gamma}((0,T);L^\gamma(\Omega))\cap L^{\gamma}((0,T);W^{2,\gamma}(\Omega)).$ In addition, if $s_0\in[0,T)$, $v(\cdot,s_0)\in W^{2,\gamma}(\Omega)(\gamma>N)$ with $\disp\frac{\partial v(\cdot,s_0)}{\partial \nu}=0,$
then there exists a positive constant $\lambda_0:=\lambda_0(\Omega,\gamma,N)$ such that  
$$
\begin{array}{rl}
&\disp{\int_{s_0}^Te^{\gamma s}\| v(\cdot,t)\|^{\gamma}_{W^{2,\gamma}(\Omega)}ds\leq\lambda_0\left(\int_{s_0}^Te^{\gamma s}
\|g(\cdot,s)\|^{\gamma}_{L^{\gamma}(\Omega)}ds+e^{\gamma s_0}(\|v_0(\cdot,s_0)\|^{\gamma}_{W^{2,\gamma}(\Omega)})\right).}\\
\end{array}
$$
\end{lemma}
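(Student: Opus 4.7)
The plan is to reduce the weighted estimate to the standard (unweighted) maximal $L^\gamma$-regularity for the heat equation on $\Omega$ with Neumann boundary conditions, by an exponential change of unknown. The guiding observation is that the weight $e^{\gamma s}$ inside the time integral is exactly the one produced by the substitution $\tilde v(x,s):=e^s v(x,s)$, since $\|\tilde v(\cdot,s)\|_{W^{2,\gamma}(\Omega)}^{\gamma}=e^{\gamma s}\|v(\cdot,s)\|_{W^{2,\gamma}(\Omega)}^{\gamma}$, and likewise for the right-hand side. So the first step would be to compute
\[
\tilde v_s-\Delta\tilde v=e^s(v_s+v)-e^s\Delta v=e^s g=:\tilde g,
\]
which kills the zeroth-order term $+v$ in the equation and leaves a pure heat equation $\tilde v_s-\Delta\tilde v=\tilde g$ on $(s_0,T)\times\Omega$ with Neumann boundary data $\partial_\nu\tilde v=0$ and initial datum $\tilde v(\cdot,s_0)=e^{s_0}v(\cdot,s_0)$.

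Next I would invoke the classical maximal $L^\gamma$-regularity theorem for the Neumann Laplacian on the bounded smooth domain $\Omega$ (for instance via the Dore--Venni / Hieber--Prüss framework, using that $-\Delta_{N}$ has bounded imaginary powers, or equivalently admits a bounded $H^\infty$-calculus and generates a bounded analytic semigroup on $L^\gamma(\Omega)$, $1<\gamma<\infty$). Applied to the equation for $\tilde v$ on the interval $(s_0,T)$, this produces a constant $\lambda_0=\lambda_0(\Omega,\gamma,N)$ and the estimate
\[
\int_{s_0}^{T}\|\tilde v(\cdot,s)\|_{W^{2,\gamma}(\Omega)}^{\gamma}\,ds\le\lambda_0\Bigl(\int_{s_0}^{T}\|\tilde g(\cdot,s)\|_{L^{\gamma}(\Omega)}^{\gamma}\,ds+\|\tilde v(\cdot,s_0)\|_{W^{2,\gamma}(\Omega)}^{\gamma}\Bigr),
\]
together with existence and uniqueness of $\tilde v$ in $W^{1,\gamma}((s_0,T);L^{\gamma})\cap L^{\gamma}((s_0,T);W^{2,\gamma})$. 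Here the hypothesis $v(\cdot,s_0)\in W^{2,\gamma}(\Omega)$ with $\partial_\nu v(\cdot,s_0)=0$, together with $\gamma>N$ ensuring that the trace condition is compatible with the natural trace space $(L^{\gamma},W^{2,\gamma})_{1-1/\gamma,\gamma}$, is precisely what is needed so that the inhomogeneous initial data can be absorbed into the right-hand side.

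Finally, substituting $\tilde v=e^s v$ and $\tilde g=e^s g$ back into the above inequality immediately yields the claimed weighted bound, with the same constant $\lambda_0$; the uniqueness and the regularity class for $v$ transfer by the same change of variables since $s\mapsto e^s$ is smooth and bounded on any finite interval. The main difficulty in this programme is the maximal regularity step itself: establishing that the Neumann Laplacian on a smooth bounded domain has maximal $L^\gamma$-regularity (equivalently, $R$-sectoriality of angle less than $\pi/2$) for all $\gamma\in(1,\infty)$, and tracking that the resulting constant $\lambda_0$ depends only on $\Omega$, $\gamma$ and $N$. This is however a by-now standard fact, so in the present paper I would simply quote it from the reference \cite{Keengwwwwssddghjjkk1} and use the exponential substitution above to package it in the weighted form required for the subsequent entropy estimates \dref{cz2sssssss.5xx1} and \dref{czssddssddffggf2.5kk1214114114rrggkkll}.
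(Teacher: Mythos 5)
The paper does not actually prove this lemma: it is quoted verbatim from \cite{Keengwwwwssddghjjkk1}, so there is no in-text argument to compare yours against. Your reduction --- substituting $\tilde v(\cdot,s)=e^{s}v(\cdot,s)$ to absorb the damping term and then invoking maximal $L^\gamma$-regularity of the Neumann heat semigroup --- is precisely the standard derivation behind this family of weighted maximal-Sobolev-regularity lemmas, so in spirit you are reconstructing the cited proof rather than taking a different route.

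There is, however, a concrete gap in your final step. After the substitution the equation for $\tilde v$ is the pure Neumann heat equation, and maximal $L^\gamma$-regularity with a constant $\lambda_0=\lambda_0(\Omega,\gamma,N)$ that does not depend on $T$ only controls the quantities $\|\tilde v_s\|_{L^\gamma(\Omega)}$ and $\|\Delta\tilde v\|_{L^\gamma(\Omega)}$ in time-$L^\gamma$; it does not give the full inhomogeneous norm $\|\tilde v\|_{W^{2,\gamma}(\Omega)}$ uniformly in $T$, because spatially constant functions lie in the kernel of the Neumann Laplacian. Indeed, take $g\equiv 0$ and $v(\cdot,s_0)\equiv c\neq 0$: then $v(\cdot,s)=c\,e^{-(s-s_0)}$, so $e^{\gamma s}\|v(\cdot,s)\|^{\gamma}_{W^{2,\gamma}(\Omega)}\equiv c^{\gamma}|\Omega|e^{\gamma s_0}$, and the left-hand side of your intermediate inequality for $\tilde v$ grows like $T-s_0$ while your right-hand side stays fixed; hence that inequality (and the weighted bound for the full $W^{2,\gamma}$-norm with a $T$-independent constant) cannot be obtained this way. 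What the substitution argument genuinely yields is the weighted estimate for $\int_{s_0}^{T}e^{\gamma s}\|\Delta v(\cdot,s)\|^{\gamma}_{L^{\gamma}(\Omega)}ds$ (and for $v_t$), which is the part actually used in \dref{cz2.5kke3456778999ddff9001214114114rrggjjkk}; the zero-order piece used in \dref{cz2.5kk12141141dfggghhh14rrggjjkk} should instead be obtained by an elementary $L^\gamma$-estimate that keeps the $-v$ damping (e.g. testing the equation with $v^{\gamma-1}$ and using a weighted Gronwall/H\"{o}lder argument), not by maximal regularity after the damping has been removed. To repair your write-up, either restrict the conclusion to the second-order (and time-derivative) part, or apply maximal regularity to the invertible operator $-\Delta+1$ and treat the exponential weight separately; note that this caveat is inherited from the lemma exactly as it is stated and quoted in the paper.
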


The local-in-time existence of classical solutions to the chemotaxis--haptotaxis model \dref{1.1} is
quite standard; see similar discussions in 
%
%
%
%
\cite{Tao72,Liughjj791}.
Therefore we omit it.
\begin{lemma}\label{lemma70}
Assume that the nonnegative functions $u_0,v_0,$ and $w_0$ satisfies \dref{x1.731426677gg} (or \dref{aassx1.731426677gg}, if $\tau=0$)
for some $\vartheta\in(0,1),$
$D$ satisfies \dref{9161} and \dref{9162}.
%
%
 Then there exists a maximal existence time $T_{max}\in(0,\infty]$ and a triple of  nonnegative functions 
 $$
\begin{array}{rl}
&\disp{u\in C^0(\bar{\Omega}\times[0,T_{max}))\cap C^{2,1}(\bar{\Omega}\times(0,T_{max})),}\\
&\disp{v\in C^0(\bar{\Omega}\times[0,T_{max}))\cap C^{2,1}(\bar{\Omega}\times(0,T_{max})),}\\
&\disp{w\in  C^{2,1}(\bar{\Omega}\times[0,T_{max}))}\\
\end{array}
$$
 which solves \dref{1.1}  classically and satisfies $w\leq \|w_0\|_{L^\infty(\Omega)}$
  in $\Omega\times(0,T_{max})$.
%
Moreover, if  $T_{max}<+\infty$, then
\begin{equation}
\|u(\cdot, t)\|_{L^\infty(\Omega)}+\|v(\cdot,t)\|_{W^{1,\infty}(\Omega)}\rightarrow\infty~~ \mbox{as}~~ t\nearrow T_{max}.
\label{1.163072x}
\end{equation}
\end{lemma}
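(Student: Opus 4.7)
The plan is to establish local existence by a standard contraction-mapping argument in which the three equations are decoupled in a way that exploits their different structures, and then to use parabolic bootstrap regularity and a standard extensibility criterion. The essential point is that once one has a short time interval on which all three components exist in a reasonable regularity class, the assumed smoothness of $D$ and of the initial data, together with parabolic Schauder theory, upgrades the solution to a classical one.

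First, I would fix $T\in(0,1)$ to be chosen small, define
\[
X_T:=\bigl\{\,\bar u\in C^0(\bar\Omega\times[0,T]) :\ \bar u\geq 0,\ \|\bar u-u_0\|_{L^\infty(\Omega\times(0,T))}\leq 1\,\bigr\},
\]
and construct a map $\Phi:X_T\to X_T$ as follows. Given $\bar u\in X_T$, solve the linear parabolic problem $\tau v_t=\Delta v-v+\bar u$ with Neumann data and initial datum $v_0$ (or the elliptic problem $-\Delta v+v=\bar u$ if $\tau=0$); by standard parabolic $L^p$ and Schauder theory this gives a unique $v\in C^0(\bar\Omega\times[0,T])\cap C^{2,1}(\bar\Omega\times(0,T])$ whose $W^{1,\infty}$-norm is controlled by the data. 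Then the third equation is a pointwise ODE in $t$ with parameter $x$, whose explicit solution is
\[
w(x,t)=w_0(x)\exp\!\Bigl(-\int_0^t v(x,s)\,ds\Bigr),
\]
so $w\geq0$, $w\leq\|w_0\|_{L^\infty(\Omega)}$, and $w$ inherits the $C^{2+\vartheta,1+\vartheta/2}$-regularity from $w_0$ and $v$. Finally, define $\Phi(\bar u)=u$ where $u$ solves the quasilinear parabolic problem obtained by freezing $D(\bar u)$, $\nabla v$, $\nabla w$ in the first equation; since $D\in C^2([0,\infty))$ is bounded below on the compact set $[0,\|u_0\|_{L^\infty}+1]$, the frozen equation is uniformly parabolic, so there is a unique $u\in C^{2,1}(\bar\Omega\times(0,T])\cap C^0(\bar\Omega\times[0,T])$. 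Choosing $T$ small, the Lipschitz estimates make $\Phi$ a contraction on $X_T$, and the Banach fixed point theorem produces a local classical solution.

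Next I would verify that $u\geq0$ on the maximal interval by a standard weak maximum/parabolic comparison argument applied to the truncation $u_-:=\min\{u,0\}$: testing the $u$-equation against $u_-$, using $\nabla v$ and $\nabla w$ bounds and the Neumann boundary condition, yields $u_-\equiv 0$. The pointwise bound $0\leq w\leq\|w_0\|_{L^\infty(\Omega)}$ follows directly from the ODE representation above. To obtain a maximal existence time $T_{\max}$, I would use a continuation procedure: whenever the solution is bounded in the topology that controls the nonlinearities — namely, $\|u(\cdot,t)\|_{L^\infty(\Omega)}+\|v(\cdot,t)\|_{W^{1,\infty}(\Omega)}$ — the previous contraction argument, applied with initial time $t_0$ and the corresponding time-$t_0$ data, reproduces existence on $[t_0,t_0+\delta]$ with $\delta$ depending only on those two norms. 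This gives the usual dichotomy in \eqref{1.163072x}.

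The chief technical subtlety, and the only point that is not entirely routine, is the quasilinearity of the $u$-equation combined with the possibility that $D$ degenerates at infinity: if $m<1$ one must also guard against degeneracy at large $u$, but on any time interval where $\|u\|_{L^\infty}$ stays finite the equation remains uniformly parabolic with $C^2$-coefficients, so Ladyzhenskaya–Solonnikov–Ural'ceva theory applies and yields classical regularity. Since the assumptions \eqref{9161}--\eqref{9162} on $D$ and the initial data exactly match those handled in \cite{Tao72,Liughjj791}, the regularization-free argument above goes through without new ideas, which is why the proof can be safely omitted and only the resulting statement recorded.
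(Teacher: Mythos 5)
The paper itself omits the proof of this lemma, merely citing the standard fixed-point arguments of \cite{Tao72,Liughjj791}, and your proposal reconstructs exactly that standard route: decouple the system (linear parabolic or elliptic problem for $v$, explicit ODE formula for $w$ giving $0\leq w\leq\|w_0\|_{L^\infty(\Omega)}$, frozen-coefficient parabolic problem for $u$), close a contraction on a short time interval, prove nonnegativity by comparison, and obtain $T_{max}$ together with the extensibility criterion \dref{1.163072x} by restarting the argument with data controlled by $\|u\|_{L^\infty}+\|v\|_{W^{1,\infty}}$. The only minor technical refinement needed to make this fully rigorous is to run the contraction in a H\"older-type space (or treat the frozen problem by $W^{2,1}_p$ theory and bootstrap to $C^{2,1}$ afterwards), since $D(\bar u)$ with $\bar u$ merely continuous is not regular enough for classical Schauder solvability of the frozen equation; this is routine and does not affect the validity of your approach.
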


According to the above existence theory,  for any $s\in(0, T_{max})$, $(u(\cdot, s), v(\cdot, s),w(\cdot, s))\in C^2(\bar{\Omega})$.
Without
 loss of generality, we can assume that there exists a positive constant 
$K$ such that
\begin{equation}\label{eqx45xx12112}
\|u_0\|_{C^2(\bar{\Omega})}+\|v_0\|_{C^2(\bar{\Omega})}+\|w_0\|_{C^2(\bar{\Omega})}\leq K.
\end{equation}


Employing  the same arguments as in the proof of Lemma 2.3 in \cite{Taoddfvbb41215} (see also \cite{Taox3201}), we derive  the following Lemma:
\begin{lemma}\label{lemm3a} Let $(u, v,w)$ solve \dref{1.1} in $\Omega\times(0, T_{max})$.
 Then
 \begin{equation}\label{x1.731426677gghh}
 \begin{array}{rl}
-\Delta w(x, t) \leq&\disp{\tau \|w_0\|_{L^\infty(\Omega)}\cdot v(x,t)+\kappa~~~\mbox{for all}~~x\in\Omega~~\mbox{and}~~~t\in(0, T_{max}),}\\
\end{array}
\end{equation}
where
\begin{equation}\label{x1.73ddff1426677gghh}
\kappa:=\|\Delta w_0\|_{L^\infty(\Omega)}+4\|\nabla\sqrt{w_0}\|_{L^\infty(\Omega)}^2+\frac{\|w_0\|_{L^\infty(\Omega)}}{e}.
\end{equation}

\end{lemma}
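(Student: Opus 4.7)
The plan is to integrate the third equation of \dref{1.1} explicitly: regarded as an ODE in $t$ with $x$ a parameter, $w_t=-vw$ gives the closed-form representation
$$w(x,t)=w_0(x)\,\exp\bigl(-\phi(x,t)\bigr),\qquad \phi(x,t):=\int_0^t v(x,s)\,ds,$$
and the whole argument is built on differentiating this formula twice in space. A direct chain-rule computation produces
$$-\Delta w = e^{-\phi}\Bigl[-\Delta w_0 + 2\nabla w_0\cdot\nabla\phi + w_0\,\Delta\phi - w_0|\nabla\phi|^2\Bigr],$$
so the proof reduces to controlling each of the four terms on the right.

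The first step I would carry out is to absorb the cross term $2\nabla w_0\cdot\nabla\phi$ via the AM--GM inequality
$$2\nabla w_0\cdot\nabla\phi \;\leq\; \frac{|\nabla w_0|^2}{w_0} + w_0|\nabla\phi|^2,$$
tuned so that the resulting $w_0|\nabla\phi|^2$ exactly cancels the non-positive term $-w_0|\nabla\phi|^2$ already present. The identity $|\nabla w_0|^2/w_0 = 4|\nabla\sqrt{w_0}|^2$ then produces the second summand in the definition of $\kappa$. Because $w_0$ may touch zero, some care is needed here: one either works on $\{w_0>0\}$ and uses continuity, or regularizes $w_0\leadsto w_0+\delta$ and lets $\delta\downarrow 0$, observing that the regularity $w_0\in C^{2+\vartheta}(\bar\Omega)$ with $w_0\geq 0$ forces $|\nabla\sqrt{w_0}|$ to stay bounded up to the boundary.

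Next I would compute $\Delta\phi(x,t)=\int_0^t\Delta v(x,s)\,ds$ and eliminate $\Delta v$ via the $v$-equation. When $\tau=1$, the substitution $\Delta v = v_t + v - u$ followed by time integration yields
$$\Delta\phi(x,t) = v(x,t) - v_0(x) + \phi(x,t) - \int_0^t u(x,s)\,ds,$$
while for $\tau=0$ the identity $\Delta v = v-u$ gives the simpler $\Delta\phi = \phi - \int_0^t u\,ds$. Since $v_0,u\geq 0$, discarding the non-positive contributions leads to the unified estimate $\Delta\phi \leq \tau\, v + \phi$ valid in both regimes.

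Finally I would assemble everything using $e^{-\phi}\leq 1$, $w_0\leq \|w_0\|_{L^\infty(\Omega)}$, and the elementary calculus bound $\sup_{s\geq 0} s\,e^{-s} = 1/e$ applied to the quantity $w_0\,e^{-\phi}\phi$ so as to convert the remaining occurrence of $\phi$ into an absolute constant. The terms then collect exactly into $\|\Delta w_0\|_{L^\infty(\Omega)} + 4\|\nabla\sqrt{w_0}\|_{L^\infty(\Omega)}^2 + \|w_0\|_{L^\infty(\Omega)}/e = \kappa$, while the sole $v$-dependent summand carries the prefactor $\tau\,\|w_0\|_{L^\infty(\Omega)}$, matching the claim. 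The main technical obstacle is the delicate cancellation in the first step: any other AM--GM coefficient either destroys the sign structure of $-w_0|\nabla\phi|^2$ or introduces an uncontrollable $1/w_0$ singularity where $w_0$ vanishes, which is precisely why the regularity of $\sqrt{w_0}$ (rather than, say, $\ln w_0$) must appear in the final constant $\kappa$.
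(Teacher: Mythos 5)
Your proposal is correct and follows essentially the same route as the paper, which proves this lemma by invoking the argument of Lemma 2.3 in Tao--Winkler \cite{Taoddfvbb41215}: the explicit representation $w=w_0e^{-\int_0^t v}$, two spatial differentiations, absorption of the cross term $2\nabla w_0\cdot\nabla\phi$ into $w_0|\nabla\phi|^2$ via Young's inequality (producing $4\|\nabla\sqrt{w_0}\|_{L^\infty(\Omega)}^2$), elimination of $\Delta v$ through the second equation, and the bound $\sup_{s\geq0}se^{-s}=1/e$. The computation and the handling of the case $\tau=0$ versus $\tau=1$ match the cited argument, so no further comment is needed.
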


\section{A priori estimates}

In this section, we are going to establish an iteration step to develop the main ingredient of our result.
The iteration depends on a series of a priori estimates.
Firstly, 
%
%
%
the following two
lemmas provide some elementary material that will be essential to our bootstrap
procedure.

\begin{lemma}\label{333ssdeedrfe116lemma70hhjj}
Let $\mu=0,$ 
then
 the solution $(u, v,w)$ of  \dref{1.1} satisfies
%
%
\begin{equation}
 \begin{array}{rl}
 \|u(\cdot,t)\|_{L^1(\Omega)}= \|u_0\|_{L^1(\Omega)}~~~\mbox{for all}~~t\in (0, T_{max}).
\end{array}\label{333ssddaqwswddaassffssff3.ddfvbb10deerfgghhjuuloollgghhhyhh}
\end{equation}

%
\end{lemma}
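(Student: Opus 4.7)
The plan is to establish this mass conservation identity by integrating the first equation of \dref{1.1} over $\Omega$ and exploiting the no-flux boundary conditions together with the hypothesis $\mu=0$. Since the solution is classical on $\Omega\times(0,T_{max})$ by Lemma \ref{lemma70}, all the manipulations below are rigorously justified.

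First I would test the equation for $u$ against the constant function $1$. Writing
\begin{equation*}
\frac{d}{dt}\int_{\Omega}u(\cdot,t)\,dx=\int_{\Omega}\nabla\cdot\bigl(D(u)\nabla u\bigr)\,dx-\chi\int_{\Omega}\nabla\cdot(u\nabla v)\,dx-\xi\int_{\Omega}\nabla\cdot(u\nabla w)\,dx+\mu\int_{\Omega}u(1-u-w)\,dx,
\end{equation*}
the last term vanishes identically because $\mu=0$. For the divergence terms, an application of the divergence theorem yields
\begin{equation*}
\frac{d}{dt}\int_{\Omega}u(\cdot,t)\,dx=\int_{\partial\Omega}\Bigl\{D(u)\frac{\partial u}{\partial\nu}-\chi u\frac{\partial v}{\partial\nu}-\xi u\frac{\partial w}{\partial\nu}\Bigr\}\,dS,
\end{equation*}
so it only remains to show that the boundary integrand is zero.

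The Neumann condition in \dref{1.1} directly gives $\frac{\partial v}{\partial\nu}=0$ on $\partial\Omega$. To handle the $w$--contribution, I would use that the third equation $w_t=-vw$ is an ODE in time at each spatial point, whose solution is $w(x,t)=w_0(x)\exp\bigl(-\int_0^tv(x,s)\,ds\bigr)$. Differentiating tangentially and in the normal direction on $\partial\Omega$, and using $\frac{\partial w_0}{\partial\nu}=0$ from \dref{aassx1.731426677gg} (or \dref{x1.731426677gg}) together with $\frac{\partial v}{\partial\nu}=0$, one gets $\frac{\partial w}{\partial\nu}=0$ on $\partial\Omega\times(0,T_{max})$. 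Combined with the no-flux condition $D(u)\frac{\partial u}{\partial\nu}-\chi\frac{\partial v}{\partial\nu}-\xi\frac{\partial w}{\partial\nu}=0$ prescribed in \dref{1.1}, this yields $D(u)\frac{\partial u}{\partial\nu}=0$, and hence the full boundary integrand vanishes.

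Therefore $\frac{d}{dt}\int_{\Omega}u(\cdot,t)\,dx=0$ on $(0,T_{max})$, and integrating in time gives $\|u(\cdot,t)\|_{L^{1}(\Omega)}=\|u_{0}\|_{L^{1}(\Omega)}$ as claimed. There is no serious obstacle here: the only point requiring a moment of care is verifying $\frac{\partial w}{\partial\nu}=0$ on the lateral boundary, which follows cleanly from the explicit representation of $w$ via the ODE.
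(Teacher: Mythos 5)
Your proof is correct and is exactly the standard argument this lemma rests on: integrate the first equation over $\Omega$, note the $\mu$-term drops out, and kill the boundary flux using $\frac{\partial v}{\partial\nu}=0$ together with $\frac{\partial w}{\partial\nu}=0$ (propagated from $\frac{\partial w_0}{\partial\nu}=0$ via the explicit ODE formula for $w$) and the prescribed no-flux condition. The paper states this lemma without proof precisely because this is the routine computation, so there is nothing to add; your treatment of the $w$-boundary term is the only point needing care and you handle it correctly.
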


In contrast to the situation without source terms ($\mu=0$ in \dref{1.1}), we cannot hope for mass conservation in the first
component. Nevertheless, the following inequality still holds:
\begin{lemma}\label{ssdeedrfe116lemma70hhjj} (see e.g. \cite{Zhengddfggghjjkk1})
Assume that $\mu>0.$
There exists a positive constant  
$ K_0$ such that
 the solution $(u, v,w)$ of  \dref{1.1} satisfies
%
%
\begin{equation}
 \begin{array}{rl}
 \|u(\cdot,t)\|_{L^1(\Omega)}\leq K_0~~~\mbox{for all}~~t\in (0, T_{max})
\end{array}\label{ssddaqwswddaassffssff3.ddfvbb10deerfgghhjuuloollgghhhyhh}
\end{equation}
and
\begin{equation}
\int_t^{t+\tau}\int_{\Omega}{u^{2}}\leq  K_0~~\mbox{for all}~~ t\in(0, T_{max}-\tau),
\label{bnmbncz2.5ghhjuyuivvddfggghhbssdddeennihjj}
\end{equation}
where \begin{equation}
\tau:=\min\{1,\frac{1}{6}T_{max}\}.
\label{cz2.5ghju48cfg924vbhu}
\end{equation}

%
\end{lemma}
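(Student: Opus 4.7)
\medskip
\noindent\textbf{Proof proposal for Lemma \ref{ssdeedrfe116lemma70hhjj}.} The plan is to integrate the first equation of \dref{1.1} over $\Omega$ and to exploit the combined no-flux boundary condition to eliminate all diffusive and drift contributions, thereby reducing matters to a scalar logistic-type differential inequality for $y(t):=\int_{\Omega}u(\cdot,t)$. Since Lemma \ref{lemma70} provides $u\geq 0$ and $w\geq 0$ on $\Omega\times(0,T_{max})$, the cross term $-\mu\int_{\Omega}uw$ arising from the logistic source is nonpositive and may be discarded. This yields
$$
\frac{d}{dt}\int_{\Omega}u+\mu\int_{\Omega}u^{2}\leq \mu\int_{\Omega}u \qquad \hbox{for all } t\in (0,T_{max}).
$$

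By the Cauchy--Schwarz inequality one has $\bigl(\int_{\Omega}u\bigr)^{2}\leq |\Omega|\int_{\Omega}u^{2}$, so that the above inequality implies
$$
y'(t)+\frac{\mu}{|\Omega|}y(t)^{2}\leq \mu\, y(t) \qquad \hbox{for all } t\in (0,T_{max}).
$$
An elementary pointwise comparison, applied at any local maximum of $y$, shows that $y$ cannot exceed the larger of its initial value and the equilibrium $|\Omega|$; hence $y(t)\leq \max\{\|u_{0}\|_{L^{1}(\Omega)},|\Omega|\}$, which is \dref{ssddaqwswddaassffssff3.ddfvbb10deerfgghhjuuloollgghhhyhh} after choosing $K_{0}$ larger than this bound.

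For the space-time estimate \dref{bnmbncz2.5ghhjuyuivvddfggghhbssdddeennihjj}, I would integrate the differential inequality of the first paragraph over the window $(t,t+\tau)$, obtaining
$$
\mu\int_{t}^{t+\tau}\!\!\int_{\Omega}u^{2} \leq \int_{\Omega}u(\cdot,t)-\int_{\Omega}u(\cdot,t+\tau)+\mu\int_{t}^{t+\tau}\!\!\int_{\Omega}u.
$$
Dropping the nonnegative boundary value at $t+\tau$ and substituting the uniform $L^{1}$ bound just derived, together with $\tau\leq 1$, controls the right-hand side by a constant depending only on $\mu$, $|\Omega|$ and $\|u_{0}\|_{L^{1}(\Omega)}$, and enlarging $K_{0}$ if necessary completes the proof. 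There is essentially no genuine obstacle: the main point requiring care is merely verifying that the combined no-flux condition annihilates the surface integrals of all three fluxes $D(u)\nabla u$, $\chi u\nabla v$, and $\xi u\nabla w$ simultaneously, and that the haptotaxis contribution $-\mu\int_{\Omega}uw$ has the favourable sign so that it can be simply discarded; beyond this the argument is entirely classical ODE comparison.
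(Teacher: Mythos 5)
Your proposal is correct and is essentially the standard argument that the paper itself does not reproduce but delegates to the cited reference \cite{Zhengddfggghjjkk1}: integrate the first equation using the combined no-flux condition, discard $-\mu\int_\Omega uw\le 0$, apply Cauchy--Schwarz to get the logistic ODE inequality for $y(t)=\int_\Omega u$ and conclude $y\le\max\{\|u_0\|_{L^1(\Omega)},|\Omega|\}$, then integrate the same inequality over $(t,t+\tau)$ for the space--time $L^2$ bound. The only point worth flagging is that the constant in \dref{bnmbncz2.5ghhjuyuivvddfggghhbssdddeennihjj} is obtained after dividing by $\mu$, so it degenerates as $\mu\to0$, which is consistent with the hypothesis $\mu>0$ of the lemma.
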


Now, we now proceed to derive a uniform  upper bound for $u$, which
turns out to be the key to obtain all the higher order estimates and thus to extend the
classical solution globally.
%
%
To do this, employing  the maximal Sobolev
regularity, in light of Lemma \ref{333ssdeedrfe116lemma70hhjj},  as a first conclusion  towards global existence of the classical
 solutions is the following a priori
estimate which asserts that, in sharp contrast to the case $\mu=0$ (see also \cite{Winkler37103})
 is a priori
uniformly bounded in $L^k(\Omega)$ for some $k$ larger than one. In order to deal with  the critical case ($k=\lambda$),  the novelty of paper, we first obtain the bounded of $\|u(\cdot, t)\|_{L^{k_0}(\Omega)}$
where
$k_0\in (\max\{1,\lambda-\frac{N}{2}\},\lambda)$. And then by some careful analysis, one can finally derive the bounded of the critical case, which are the following Lemmas:
%
%
%
%
%
\begin{lemma}\label{3455667lemma45630}
Let $(u,v,w)$ be a solution to \dref{1.1} on $(0,T_{max})$. 
 Then for any $k>1,$
 one can
find positive constants  $\rho_0$ and $\rho_1$ such that
\begin{equation}
\begin{array}{rl}
&\disp{\frac{1}{{k}}\|u(\cdot,t) \|^{{{k}}}_{L^{{k}}(\Omega)}+C_ D(k-1)\int_{0}^t
e^{-( { {k}+1})(t-s)}\int_{\Omega}u^{m+k-3}|\nabla u|^2dxds}
\\
\leq&\disp{[\frac{({k}-1)}{{k}}\kappa_0- \mu]\int_{0}^t
e^{-( {k}+1)(t-s)}\int_\Omega u^{{{k}+1}}dxds }\\
&+\disp{ \rho_0\int_{0}^t
e^{-( { {k}+1})(t-s)}\int_\Omega u^{{{k}}} dxds+\rho_1~~~\mbox{for all}~~t\in (0, T_{max}),}\\
\end{array}
\label{czssddssddffggf2.3456675kk1214114114rrggkkll}
\end{equation}
where $\kappa_0$ is the same as \dref{113344ddff4dffgg4zjscz2ddd.5297x96302222114}.
\end{lemma}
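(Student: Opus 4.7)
The plan is to test the first equation of \dref{1.1} against $u^{k-1}$ and carefully dispatch each resulting term, then recast everything in the integrated form with weight $e^{-(k+1)(t-s)}$ via an integrating factor. First I would compute
\begin{equation*}
\frac{1}{k}\frac{d}{dt}\int_\Omega u^k + (k-1)\int_\Omega D(u)u^{k-2}|\nabla u|^2
= \chi(k-1)\int_\Omega u^{k-1}\nabla u\!\cdot\!\nabla v + \xi(k-1)\int_\Omega u^{k-1}\nabla u\!\cdot\!\nabla w + \mu\int_\Omega u^k(1-u-w),
\end{equation*}
use \dref{9162} to bound the dissipation from below by $C_D(k-1)\int_\Omega u^{m+k-3}|\nabla u|^2$, and note that the logistic term is $\leq \mu\int_\Omega u^k - \mu\int_\Omega u^{k+1}$ since $w\geq 0$.

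Next I would integrate by parts to rewrite the taxis terms as $-\tfrac{\chi(k-1)}{k}\int_\Omega u^k\Delta v$ and $-\tfrac{\xi(k-1)}{k}\int_\Omega u^k\Delta w$. For the haptotaxis piece, Lemma \ref{lemm3a} delivers $-\Delta w\leq \tau\|w_0\|_{L^\infty(\Omega)}v + \kappa$, which controls $-\int_\Omega u^k\Delta w$ by $\tau\|w_0\|_{L^\infty(\Omega)}\int_\Omega u^k v + \kappa\int_\Omega u^k$. In the case $\tau=0$, the equation $-\Delta v=u-v$ gives $-\int_\Omega u^k\Delta v\leq \int_\Omega u^{k+1}$, producing the sharp coefficient $\tfrac{\chi(k-1)}{k}$ in front of $\int_\Omega u^{k+1}$, which matches $\kappa_0=\chi$. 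In the case $\tau=1$, I would first combine the two remaining $v$-dependent contributions via Hölder's inequality in space,
\begin{equation*}
\chi\!\int_\Omega u^k|\Delta v|+\xi\|w_0\|_{L^\infty(\Omega)}\!\int_\Omega u^k v \leq \Bigl(\!\int_\Omega u^{k+1}\Bigr)^{\!\!k/(k+1)}\!\bigl[\chi\|\Delta v\|_{L^{k+1}(\Omega)}+\xi\|w_0\|_{L^\infty(\Omega)}\|v\|_{L^{k+1}(\Omega)}\bigr],
\end{equation*}
then dominate both norms in the bracket by $\|v\|_{W^{2,k+1}(\Omega)}$.

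To convert the differential inequality to the claimed integrated form, I would add $\tfrac{k+1}{k}\int_\Omega u^k$ to both sides, multiply by $e^{(k+1)t}$, and integrate over $(0,t)$; this produces the $e^{-(k+1)(t-s)}$ weights and an initial contribution $\tfrac{1}{k}e^{-(k+1)t}\|u_0\|_{L^k(\Omega)}^k$ that is absorbed into $\rho_1$. For $\tau=1$, applying Hölder in time to the weighted integral of $(\int u^{k+1})^{k/(k+1)}\|v\|_{W^{2,k+1}(\Omega)}$ and then invoking Lemma \ref{lemma45xy1222232} with $\gamma=k+1$ yields
\begin{equation*}
\int_0^t e^{-(k+1)(t-s)}\|v\|_{W^{2,k+1}(\Omega)}^{k+1}\,ds\;\leq\;\lambda_0\!\int_0^t e^{-(k+1)(t-s)}\!\int_\Omega u^{k+1}\,dx\,ds\;+\;\lambda_0 e^{-(k+1)t}\|v_0\|_{W^{2,k+1}(\Omega)}^{k+1}.
\end{equation*}
Exploiting the subadditivity $(a+b)^{1/(k+1)}\leq a^{1/(k+1)}+b^{1/(k+1)}$, followed by Young's inequality to absorb a small multiple of $\int u^{k+1}$ (redistributable into the leading coefficient to any accuracy) and to collect the residue in $\rho_0\int_\Omega u^k$, the dominant chemotaxis/haptotaxis term finally carries the coefficient $\tfrac{k-1}{k}\lambda_0^{1/(k+1)}(\chi+\xi\|w_0\|_{L^\infty(\Omega)})\leq\tfrac{k-1}{k}\kappa_0$ by definition \dref{113344ddff4dffgg4zjscz2ddd.5297x96302222114}; unifying both cases $\tau=0,1$ then gives \dref{czssddssddffggf2.3456675kk1214114114rrggkkll}.

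The main obstacle is the bookkeeping in the $\tau=1$ case: the split of $e^{-(k+1)(t-s)}=e^{-k(t-s)}\cdot e^{-(t-s)}$ and the Hölder exponents $(k{+}1)/k$ and $k{+}1$ must be chosen so that the maximal regularity estimate produces exactly $\lambda_0^{1/(k+1)}$ and the chemotactic and haptotactic fluxes merge into a single factor $\chi+\xi\|w_0\|_{L^\infty(\Omega)}$. Any looseness here degrades the coefficient of $\int u^{k+1}$ and destroys the critical balance against $\mu$ that underlies the threshold $\lambda=\kappa_0/(\kappa_0-\mu)_+$ used throughout the subsequent bootstrap.
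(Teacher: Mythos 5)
Your overall skeleton coincides with the paper's: test with $u^{k-1}$, drop $-\mu u^k w$ via $w\ge 0$, rewrite the taxis terms as $-\frac{(k-1)}{k}\int_\Omega u^k\Delta v$ and $-\frac{(k-1)}{k}\int_\Omega u^k\Delta w$, invoke Lemma \ref{lemm3a} for $\Delta w$, use the elliptic equation directly when $\tau=0$, and for $\tau=1$ pass through the exponential weight and the maximal regularity Lemma \ref{lemma45xy1222232} to trade $\|\Delta v\|$, $\|v\|$ for $\int u^{k+1}$ at the cost of $\lambda_0^{1/(k+1)}$. The difference is the order of operations in the $\tau=1$ case (you: H\"older in space and time, then maximal regularity, then Young; the paper: Young pointwise with free parameters $\varepsilon_1,\varepsilon_2$, then maximal regularity, then optimize $\varepsilon_1=\frac{(k-1)\chi}{k+1}\lambda_0^{\frac{1}{k+1}}$, $\varepsilon_2=\frac{(k-1)\xi\|w_0\|_{L^\infty(\Omega)}}{k+1}\lambda_0^{\frac{1}{k+1}}$ so that $\varepsilon_i+\gamma_i\lambda_0\varepsilon_i^{-k}$ equals the target exactly, with $\|v_0\|_{W^{2,k+1}}^{k+1}$ entering only as an additive constant).

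This ordering is where your argument has a genuine gap. After H\"older in time and maximal regularity you face $Y^{\frac{k}{k+1}}\bigl(\lambda_0 Y+B\bigr)^{\frac{1}{k+1}}$ with $Y=\int_0^te^{-(k+1)(t-s)}\int_\Omega u^{k+1}$ and $B=\lambda_0 e^{-(k+1)t}\|v_0\|^{k+1}_{W^{2,k+1}(\Omega)}$; your subadditivity step leaves the cross term $B^{\frac{1}{k+1}}Y^{\frac{k}{k+1}}$, and absorbing it by Young as you propose yields $\epsilon Y+C_\epsilon$ with $\epsilon>0$, i.e.\ you prove \dref{czssddssddffggf2.3456675kk1214114114rrggkkll} with $\frac{k-1}{k}\kappa_0-\mu$ replaced by $\frac{k-1}{k}\kappa_0-\mu+\epsilon$. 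Contrary to your remark that this is ``redistributable to any accuracy'', no positive $\epsilon$ is admissible: the lemma is later applied with $k=\lambda$ exactly, where $\frac{k-1}{k}\kappa_0-\mu=0$, and in the borderline case $m=2-\frac{2}{N}\lambda$ (case (i) of the theorems) an extra $\epsilon\int_\Omega u^{\lambda+1}$ cannot be controlled by Gagliardo--Nirenberg from the available $L^{k_0}$ bound with $k_0<\lambda$, since $m+\lambda-1+\frac{2}{N}k_0<\lambda+1$; the sharp constant also feeds into case (iv). The gap is repairable within your route: instead of subadditivity plus crude Young, apply Young with an optimized weight directly to the product, bounding $Y^{\frac{k}{k+1}}(\lambda_0 Y+B)^{\frac{1}{k+1}}\le\frac{k}{k+1}\eta Y+\frac{1}{k+1}\eta^{-k}(\lambda_0 Y+B)$ and choosing $\eta=\lambda_0^{\frac{1}{k+1}}$, which gives exactly $\lambda_0^{\frac{1}{k+1}}Y$ plus a pure constant that goes into $\rho_1$; alternatively, adopt the paper's ordering described above.
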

\begin{proof}
Multiplying the first equation in  \dref{1.1}
  by $u^{k-1}$, and integrating in space  and using $w\geq0$, 
 we get
\begin{equation}
\begin{array}{rl}
&\disp{\frac{1}{k}\frac{d}{dt}\|u\|^{k}_{L^k(\Omega)}+(k-1)\int_{\Omega}u^{k-2} D(u)|\nabla u|^2dx}
\\
\leq&\disp{-\chi\int_\Omega \nabla\cdot(u\nabla v)u^{k-1}dx-\xi\int_\Omega\nabla\cdot
  (u\nabla w)u^{k-1}+\mu
\int_\Omega  u^{k-1} u(1- u-w) }\\
\leq&\disp{-\chi\int_\Omega \nabla\cdot(u\nabla v)u^{k-1}dx-\xi\int_\Omega\nabla\cdot
  (u\nabla w)u^{k-1}+\mu
\int_\Omega  u^{k-1} u(1- u) }\\
\end{array}
\label{cz2.5xx1jjjj}
\end{equation}
for all $t\in(0,T_{max})$.

{\bf Case  $\tau=0:$}
Integrating by parts to the first term on the right hand side of \dref{cz2.5xx1jjjj} and %
from
$\dref{1.1}_2$ we obtain
\begin{equation}
\begin{array}{rl}
&\disp{-\chi\int_\Omega \nabla\cdot(u\nabla v)u^{k-1}}
\\
=&\disp{(k-1 )\chi\int_\Omega  u^{k-1}\nabla u\cdot\nabla v }
\\
\leq&\disp{\frac{k-1}{k }\chi  \int_\Omega u^{k+1},}\\
\end{array}
\label{cz2.5630111}
\end{equation}
where  we have used the fact that $v\geq0.$
Summing up \dref{x1.731426677gghh} and \dref{x1.73ddff1426677gghh}
yields to 
\begin{equation}
\begin{array}{rl}
&\disp{-\xi\int_\Omega \nabla\cdot( u \nabla w)
 u^{k-1} }
\\
=&\disp{-\frac{({k-1})}{k}\xi\int_\Omega u^{k}\Delta w }
\\
\leq&\disp{\kappa\frac{({k-1})\xi}{k}\int_\Omega u^{k}}\\
\leq&\disp{\kappa\xi\int_\Omega u^{k},}\\
\end{array}
\label{cz2.563019rrtttt12}
\end{equation}
where
$\kappa$ is give by \dref{x1.73ddff1426677gghh} and
we have used the fact that $\tau=0$ in \dref{x1.731426677gghh}.
Here,  by some basic calculation, we deduce that
\begin{equation}
\begin{array}{rl}
&\disp{\mu
\int_\Omega  u^{k-1} u(1- u) =-\mu\int_\Omega   u^{k+1}+\mu \int_\Omega   u^{k}.}\\
\end{array}
\label{cz2.56301hh}
\end{equation}
Therefore, combined with
\dref{cz2.563019rrtttt12}, \dref{cz2.56301hh},
and \dref{cz2.5xx1jjjj}  and \dref{9162}, we have
\begin{equation}
\begin{array}{rl}
&\disp{\frac{1}{k}\frac{d}{dt}\|u\|^{k}_{L^k(\Omega)}+\frac{4C_ D(k-1)}{(m+k-1)^2}\|\nabla u^{\frac{m+k-1}{2}}\|^2_{L^2(\Omega)}+\frac{k+1}{k}\int_\Omega  u^{k}}
\\
\leq&\disp{(-\mu+\frac{k-1}{k }\chi) \int_\Omega u^{k+1}+C_1\int_\Omega   u^{k}~~\mbox{for all}~~ t\in(0,T_{max}),}\\
\end{array}
\label{cz2sssssss.5xx1}
\end{equation}
where
 \begin{equation}
 C_1=\mu+\kappa\xi+2.
 \label{cz2sssssssdertts.5xx1}
\end{equation} Here we have used the fact that $\frac{k+1}{k}\leq2$ (by $k>1$).
For any $t\in (0,T_{max})$, applying the Gronwall Lemma   to the above inequality, we have
\begin{equation}
\begin{array}{rl}
&\disp{\frac{1}{{k}}\|u(\cdot,t) \|^{{{k}}}_{L^{{k}}(\Omega)}+C_ D(k-1)\int_{0}^t
e^{-( { {k}+1})(t-s)}\int_{\Omega}u^{m+k-3}|\nabla u|^2dxds}
\\
\leq&\disp{(-\mu+\frac{k-1}{k }\chi) \int_{0}^t\int_\Omega u^{k+1}dxds+C_1\int_{0}^t\int_\Omega   u^{k}dxds+C_2~~\mbox{for all}~~ t\in(0,T_{max}),}\\
\end{array}
\label{cz2ssssssdffffggsss.5jjjxx1}
\end{equation}
with
\begin{equation}C_2:=C_2({k})=\frac{1}{{k}}\|u_0\|^{{{k}}}_{L^{{k}}(\Omega)}.
 \label{cz2ssssssdffffggsss.5sethjkkjjjxx1}
\end{equation}


{\bf Case  $\tau=1:$}
Integrating by parts to the first term on the right hand side of \dref{cz2.5xx1jjjj} and %
from
$\dref{1.1}_2$ we obtain for any $\varepsilon_1>0,$
\begin{equation}
\begin{array}{rl}
&\disp{-\chi\int_\Omega \nabla\cdot(u\nabla v)u^{k-1}}
\\
=&\disp{-\frac{({k-1})\chi}{k}\int_\Omega u^{k}\Delta v }
\\
\leq&\disp{\frac{(k-1)\chi}{k}\int_\Omega  u^{k}|\Delta v| }
\\
\leq&\disp{\varepsilon_1\int_\Omega  u^{k+1}+\gamma_1\varepsilon_1^{-k}\int_\Omega|\Delta v|^{k+1}, }
\\
\end{array}
\label{223444cz2.5630111}
\end{equation}
where  \begin{equation}\gamma_1=\frac{1}{k+1}\left(\frac{k+1}{k}\right)^{-k}\left(\frac{(k-1)\chi}{k}\right)^{k+1}.
\label{22ddf34ddff44cz2.5630111}
\end{equation}

Due to \dref{x1.731426677gghh} and \dref{x1.73ddff1426677gghh}, it follows that for any $\varepsilon_2>0,$
\begin{equation}
\begin{array}{rl}
&\disp{-\xi\int_\Omega \nabla\cdot( u \nabla w)
 u^{k-1} }
\\
=&\disp{-\frac{({k-1})\xi}{k}\int_\Omega u^{k}\Delta w }
\\
\leq&\disp{\kappa\frac{({k-1})\xi}{k}\int_\Omega u^{k}+\frac{({k-1})\xi\|w_0\|_{L^\infty(\Omega)}}{k}\int_\Omega u^{k}v}\\
\leq&\disp{\kappa\xi\int_\Omega u^{k}+\frac{({k-1})\xi\|w_0\|_{L^\infty(\Omega)}}{k}\int_\Omega u^{k}v}\\
\leq&\disp{\kappa\xi\int_\Omega u^{k}+\frac{({k-1})\xi\|w_0\|_{L^\infty(\Omega)}}{k}\int_\Omega u^{k}v}\\
\leq&\disp{\kappa\xi\int_\Omega u^{k}+\varepsilon_2\int_\Omega u^{k+1}+\gamma_2\varepsilon_2^{-k}\int_{\Omega}v^{k+1},}\\
\end{array}
\label{qqqqcz2.563019rrtttt12}
\end{equation}
where
\begin{equation}\gamma_2:=\frac{1}{k+1}\left(\frac{k+1}{k}\right)^{-k}
\left(\frac{({k-1})\xi\|w_0\|_{L^\infty(\Omega)}}{k}\right)^{k+1}.
\label{qqqssffffqcz2.563019rrtttt12}
\end{equation}
Here $\kappa$ is give by \dref{x1.73ddff1426677gghh} and
we have used the fact that $\tau=1$ in \dref{x1.731426677gghh}.
On the other hand, in view of the Young inequality, we also derive that
\begin{equation}
\begin{array}{rl}
\mu
\disp\int_\Omega  u^{k-1} u(1- u) =&\disp{-\mu\int_\Omega   u^{k+1}+(\mu+\frac{k+1}{k})\int_\Omega  u^{k}-\frac{k+1}{k}\int_\Omega  u^{k}}\\
\leq&\disp{-\mu\int_\Omega   u^{k+1}+(\mu+2)\int_\Omega  u^{k}-\frac{k+1}{k}\int_\Omega  u^{k}}\\
\end{array}
\label{sddddcz2.56301hh}
\end{equation}
by using $k>1.$
Therefore, combined with \dref{223444cz2.5630111},
\dref{qqqqcz2.563019rrtttt12}, \dref{cz2.5xx1jjjj} as well as 
 \dref{sddddcz2.56301hh}  and \dref{9162}, we have
\begin{equation}
\begin{array}{rl}
&\disp{\frac{1}{k}\frac{d}{dt}\|u\|^{k}_{L^k(\Omega)}+C_ D(k-1)\int_{\Omega}u^{m+k-3}|\nabla u|^2+\frac{k+1}{k}\int_\Omega  u^{k}}
\\
\leq&\disp{(-\mu +\varepsilon_1+\varepsilon_2 )\int_\Omega u^{k+1}+\gamma_1\int_\Omega|\Delta v|^{k+1}+\gamma_2\int_\Omega v^{k+1}+C_1\int_\Omega  u^{k},}\\
\end{array}
\label{cz2.5xx1}
\end{equation}
where  $C_1$ is the same as \dref{cz2sssssssdertts.5xx1}.
For any $t\in (0,T_{max})$, applying the Gronwall Lemma   to the above inequality shows that
\begin{equation}
\begin{array}{rl}
&\disp{\frac{1}{{k}}\|u(\cdot,t) \|^{{{k}}}_{L^{{k}}(\Omega)}+C_ D(k-1)\int_{0}^t
e^{-( { {k}+1})(t-s)}\int_{\Omega}u^{m+k-3}|\nabla u|^2}
\\
\leq&\disp{\frac{1}{{k}}e^{-( { {k}+1})t}\|u_0 \|^{{{k}}}_{L^{{k}}(\Omega)}+(\varepsilon_1+\varepsilon_2- \mu)\int_{0}^t
e^{-( { {k}+1})(t-s)}\int_\Omega u^{{{k}+1}} dxds}\\
&+\disp{\gamma_1\varepsilon_1^{-k}\int_{0}^t
e^{-( { {k}+1})(t-s)}\int_\Omega |\Delta v|^{ {k}+1} dxds+ C_1\int_{0}^t
e^{-( { {k}+1})(t-s)}\int_\Omega u^{{{k}}} dxds}\\
&\disp{+\gamma_2\varepsilon_2^{-k}\int_{0}^t
e^{-( { {k}+1})(t-s)}\int_\Omega v^{{{k}+1}} dxds}\\
\leq&\disp{(\varepsilon_1+\varepsilon_2- \mu)\int_{0}^t
e^{-( { {k}+1})(t-s)}\int_\Omega u^{{{k}+1}} dxds+\gamma_1\varepsilon_1^{-k}\int_{0}^t
e^{-( { {k}+1})(t-s)}\int_\Omega |\Delta v|^{ {k}+1} dxds}\\
&+\disp{\gamma_2\varepsilon_2^{-k}\int_{0}^t
e^{-( { {k}+1})(t-s)}\int_\Omega v^{{{k}+1}} dxds+
 C_1\int_{0}^t
e^{-( { {k}+1})(t-s)}\int_\Omega u^{{{k}}} dxds+C_2,}\\
\end{array}
\label{cz2111ddffdfghhhg11.5kk1214114114rrgg}
\end{equation}
where
$C_2$ is given by \dref{cz2ssssssdffffggsss.5sethjkkjjjxx1}.
Next, a use of Lemma \ref{lemma45xy1222232} leads to
\begin{equation}\label{cz2.5kke3456778999ddff9001214114114rrggjjkk}
\begin{array}{rl}
&\disp{\gamma_1\varepsilon_1^{-k}\int_{0}^t
e^{-( { {k}+1})(t-s)}\int_\Omega |\Delta v|^{ {k}+1} dxds}
\\
=&\disp{\gamma_1\varepsilon_1^{-k}e^{-( { {k}+1})t}\int_{0}^t
e^{( { {k}+1})s}\int_\Omega |\Delta v|^{ {k}+1} dxds}\\
\leq&\disp{\gamma_1\varepsilon_1^{-k}e^{-( { {k}+1})t}\lambda_0(\int_{0}^t
\int_\Omega e^{( { {k}+1})s}u^{ {k}+1} dxds+\|v_0\|^{ {k}+1}_{W^{2, { {k}+1}}})}\\
\end{array}
\end{equation}
and
\begin{equation}\label{cz2.5kk12141141dfggghhh14rrggjjkk}
\begin{array}{rl}
&\disp{\gamma_2\varepsilon_2^{-k}\int_{0}^t
e^{-( { {k}+1})(t-s)}\int_\Omega v^{ {k}+1} dxds}
\\
=&\disp{\gamma_2\varepsilon_2^{-k}e^{-( { {k}+1})t}\int_{0}^t
e^{( { {k}+1})s}\int_\Omega  v^{ {k}+1} dxds}\\
\leq&\disp{\gamma_2\varepsilon_2^{-k}e^{-( { {k}+1})t}\lambda_0(\int_{0}^t
\int_\Omega e^{( { {k}+1})s}u^{ {k}+1} dxds+\|v_0\|^{ {k}+1}_{W^{2, { {k}+1}}})}\\
\end{array}
\end{equation}
for all $t\in(0, T_{max})$.
On the other hand, choosing $\varepsilon_1=\frac{(k-1)\chi}{k+1}\lambda_0^{\frac{1}{k+1}}$ and
 $\varepsilon_2=\frac{(k-1)\xi\|w_0\|_{L^\infty(\Omega)}}{k+1}\lambda_0^{\frac{1}{k+1}}$, with the help of \dref{22ddf34ddff44cz2.5630111} and \dref{qqqssffffqcz2.563019rrtttt12},  a simple calculation  shows that
$$\varepsilon_1+\gamma_1\lambda_0\varepsilon_1^{-k}=\frac{({k}-1)}{{k}}\lambda_0^{\frac{1}{{k}+1}}\chi$$
and
$$\varepsilon_2+\gamma_2\lambda_0\varepsilon_2^{-k}=\frac{({k}-1)}{{k}}\lambda_0^{\frac{1}{{k}+1}}\xi\|w_0\|_{L^\infty(\Omega)},$$
so that,
substituting \dref{cz2.5kke3456778999ddff9001214114114rrggjjkk}--\dref{cz2.5kk12141141dfggghhh14rrggjjkk} into \dref{cz2111ddffdfghhhg11.5kk1214114114rrgg} implies that 
\begin{equation}
\begin{array}{rl}
&\disp{\frac{1}{{k}}\|u(\cdot,t) \|^{{{k}}}_{L^{{k}}(\Omega)}+C_ D(k-1)\int_{0}^t
e^{-( { {k}+1})(t-s)}\int_{\Omega}u^{m+k-3}|\nabla u|^2dxds}
\\
\leq&\disp{(\varepsilon_1+\gamma_1\lambda_0\varepsilon_1^{-k}+\varepsilon_2+\gamma_2\lambda_0\varepsilon_2^{-k}- \mu)\int_{0}^t
e^{-( {k}+1)(t-s)}\int_\Omega u^{{{k}+1}} dxds}\\
&+\disp{(\gamma_1\varepsilon_2^{-k}+\gamma_2\varepsilon_2^{-k})e^{-( {k}+1) t }\lambda_0\|v_0\|^{ {k}+1}_{W^{2, { {k}+1}}}+
 C_1\int_{0}^t
e^{-( { {k}+1})(t-s)}\int_\Omega u^{{{k}}} dxds+C_2}\\
=&\disp{(\frac{({k}-1)}{{k}}\lambda_0^{\frac{1}{{k}+1}}\chi+\frac{({k}-1)}{{k}}\lambda_0^{\frac{1}{{k}+1}}\xi\|w_0\|_{L^\infty(\Omega)}- \mu)\int_{0}^t
e^{-( { {k}+1})(t-s)}\int_\Omega u^{{{k}+1}} dxds}\\
&+\disp{(\gamma_1\varepsilon_2^{-k}+\gamma_2\varepsilon_2^{-k})e^{-( {k}+1) t }\lambda_0\|v_0\|^{ {k}+1}_{W^{2, { {k}+1}}}+
 C_1\int_{0}^t
e^{-( { {k}+1})(t-s)}\int_\Omega u^{{{k}}} dxds+C_2}\\
\leq&\disp{[\frac{({k}-1)}{{k}}\max_{s\geq1}\lambda_0^{\frac{1}{{{s}}+1}}(\chi+\xi\|w_0\|_{L^\infty(\Omega)})- \mu]\int_{0}^t
e^{-( {k}+1)(t-s)}\int_\Omega u^{{{k}+1}}dxds }\\
&+\disp{ C_1\int_{0}^t
e^{-( { {k}+1})(t-s)}\int_\Omega u^{{{k}}} dxds+C_3}\\
\end{array}
\label{czssddssddffggf2.5kk1214114114rrggkkll}
\end{equation}
with $$C_3=(\gamma_1\varepsilon_2^{-k}+\gamma_2\varepsilon_2^{-k})e^{-( {k}+1) t }\lambda_0\|v_0\|^{ {k}+1}_{W^{2, { {k}+1}}}+C_2.$$
Finally,  choosing $\rho_0=\mu+\kappa\xi+2$ and $\rho_1=C_3$, using \dref{cz2ssssssdffffggsss.5jjjxx1} and \dref{czssddssddffggf2.5kk1214114114rrggkkll}, applying \dref{113344ddff4dffgg4zjscz2ddd.5297x96302222114}, we derive that \dref{czssddssddffggf2.3456675kk1214114114rrggkkll} holds.
\end{proof}

\begin{lemma}\label{lemma45630}
Let $(u,v,w)$ be a solution to \dref{1.1} on $(0,T_{max})$. If $\tau=0$ and $\mu>0,$
then 
 for any
\begin{equation}
k\in\left\{\begin{array}{ll}
(1,\frac{\chi}{(\chi-\mu)_{+}}],~~\mbox{if}~~
0<\mu<\chi~~{and}~~m\geq2-\frac{2}{N}\frac{\chi}{(\chi-\mu)_{+}},\\
(1,\frac{\chi}{(\chi-\mu)_{+}}),~~\mbox{if}~~
\mu\geq\chi,\\
 \end{array}\right.
\label{zjscz2.52ddfff97x9630111}
\end{equation}
 one can
find a positive constant $C$ such that
\begin{equation}
\|u(\cdot, t)\|_{L^k(\Omega)}\leq C ~~\mbox{for all}~~ t\in(0,T_{max})
\label{zjscz2.5297x9630111}
\end{equation}
holds.
\end{lemma}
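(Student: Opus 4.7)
My starting point is the integrated energy inequality supplied by Lemma~\ref{3455667lemma45630} in the case $\tau=0$; by \dref{113344ddff4dffgg4zjscz2ddd.5297x96302222114} we then have $\kappa_{0}=\chi$, so that, writing $A:=\frac{k-1}{k}\chi-\mu$, the inequality becomes
$$
\frac{1}{k}\|u(\cdot,t)\|_{L^{k}(\Omega)}^{k}+C_{D}(k-1)\int_{0}^{t}e^{-(k+1)(t-s)}\int_{\Omega}u^{m+k-3}|\nabla u|^{2}\,dx\,ds\leq A\int_{0}^{t}e^{-(k+1)(t-s)}\int_{\Omega}u^{k+1}\,dx\,ds+\rho_{0}\int_{0}^{t}e^{-(k+1)(t-s)}\int_{\Omega}u^{k}\,dx\,ds+\rho_{1}.
$$
The bound $k\leq\chi/(\chi-\mu)_{+}$ in hypotheses (i) and (ii) is precisely $A\leq 0$, with equality occurring only in case (i) at the endpoint $k=\lambda:=\chi/(\chi-\mu)$.

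\textbf{Strictly subcritical regime} ($A<0$). This handles all of case (ii) together with case (i) whenever $k<\lambda$. Setting $\delta:=-A>0$ I apply Young's inequality pointwise to obtain $\rho_{0}u^{k}\leq\frac{\delta}{2}u^{k+1}+C(\rho_{0},\delta)$; substituting absorbs the $\rho_{0}\int u^{k}$ contribution into half of the negative $A\int u^{k+1}$ term, while the leftover absolute constant produces only $C/(k+1)$ after integrating the kernel $e^{-(k+1)(t-s)}$. One immediately concludes $\|u(\cdot,t)\|_{L^{k}(\Omega)}\leq C$ uniformly in $t\in(0,T_{\max})$.

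\textbf{Critical case} $k=\lambda$ (case (i) endpoint). Now $A=0$, the $u^{k+1}$ term collapses, and the gradient dissipation on the left must by itself absorb $\rho_{0}\int u^{\lambda}$. Setting $\psi:=u^{(m+\lambda-1)/2}$, the gradient integrand equals $\frac{4C_{D}(\lambda-1)}{(m+\lambda-1)^{2}}\|\nabla\psi\|_{L^{2}(\Omega)}^{2}$. I first invoke the subcritical step to secure $\|u(\cdot,s)\|_{L^{k_{0}}(\Omega)}\leq C$ for an auxiliary $k_{0}\in(1,\lambda)$ to be chosen below, and then apply Lemma~\ref{lemma41} to $\psi$ with target exponent $2\lambda/(m+\lambda-1)$ and source exponent $2k_{0}/(m+\lambda-1)$ (the latter correctly reproducing $\|u\|_{L^{k_{0}}}^{(m+\lambda-1)/2}$) to obtain
$$
\int_{\Omega}u^{\lambda}\,dx\leq C\bigl(\|\nabla\psi\|_{L^{2}(\Omega)}^{aq}+1\bigr),\qquad aq=\frac{2N(\lambda-k_{0})}{N(m+\lambda-1)-Nk_{0}+2k_{0}}.
$$
A short computation gives $aq<2\Leftrightarrow k_{0}>\frac{N(1-m)}{2}$, and the hypothesis $m\geq 2-\frac{2}{N}\lambda$ forces $\frac{N(1-m)}{2}\leq\lambda-\frac{N}{2}$; hence any $k_{0}\in(\max\{1,\lambda-N/2\},\lambda)$ is admissible. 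A final Young inequality turns $\|\nabla\psi\|_{L^{2}}^{aq}$ into $\eta\|\nabla\psi\|_{L^{2}}^{2}+C_{\eta}$ with $\eta$ as small as needed, so the dissipation absorbs $\rho_{0}\int u^{\lambda}$ and the estimate closes.

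\textbf{Main obstacle.} The delicate point is the critical case $k=\lambda$: the negative driver $A$ vanishes exactly where its help would be most wanted, and any direct $L^{1}$-based Gagliardo--Nirenberg interpolation would demand $m\geq 1-\frac{2}{N}$, which is strictly stronger than the theorem's hypothesis as soon as $\lambda>\frac{N+2}{2}$. The resolution is the bootstrap through the intermediate exponent $k_{0}\in(\max\{1,\lambda-N/2\},\lambda)$: the nonemptiness of this range is dictated precisely by $m\geq 2-\frac{2}{N}\lambda$, and this is exactly what lowers the GN exponent to an absorbable $aq<2$ against the dissipation, yielding the desired $L^{\lambda}$ bound without any extra smallness on $C_{D}$.
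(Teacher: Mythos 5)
Your proposal is correct and follows essentially the same route as the paper's proof: first the strictly subcritical exponents (all of case (ii) and $k<\chi/(\chi-\mu)_+$ in case (i)) are handled by the sign of $\frac{k-1}{k}\chi-\mu$ plus Young's inequality applied to the $\rho_0\int u^k$ term, and then the critical exponent $k=\lambda$ is reached by Gagliardo--Nirenberg interpolation of $\int u^\lambda$ against the previously obtained $L^{k_0}$ bound with $k_0\in(\max\{1,\lambda-\tfrac{N}{2}\},\lambda)$, the hypothesis $m\geq 2-\tfrac{2}{N}\lambda$ guaranteeing a gradient exponent strictly below $2$ that Young's inequality absorbs into the dissipation. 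Your condition $k_0>\tfrac{N(1-m)}{2}$ is exactly equivalent to the paper's requirement $m+k-1+\tfrac{2}{N}k_0>k$, so no substantive difference remains.
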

\begin{proof}
Firstly, applying Lemma \ref{3455667lemma45630}, using $\tau=0$   and \dref{113344ddff4dffgg4zjscz2ddd.5297x96302222114}, we conclude that
\begin{equation}
\begin{array}{rl}
&\disp{\frac{1}{{k}}\|u(\cdot,t) \|^{{{k}}}_{L^{{k}}(\Omega)}+C_ D(k-1)\int_{0}^t
e^{-( { {k}+1})(t-s)}\int_{\Omega}u^{m+k-3}|\nabla u|^2dxds}
\\
\leq&\disp{[\frac{({k}-1)}{{k}}\chi- \mu]\int_{0}^t
e^{-( {k}+1)(t-s)}\int_\Omega u^{{{k}+1}}dxds }\\
&+\disp{ \rho_0\int_{0}^t
e^{-( { {k}+1})(t-s)}\int_\Omega u^{{{k}}} dxds+\rho_1~~~\mbox{for all}~~t\in (0, T_{max}),}\\
\end{array}
\label{cz2sssssss.5xx1}
\end{equation}
where $\rho_0$ and  $\rho_0$ are the same as  Lemma \ref{3455667lemma45630}.

{\bf Case $\mu<\chi$}: 

{\bf Step 1. The boundedness of $\|u(\cdot, t)\|_{L^{k}(\Omega)}$
for all $t\in (0, T_{max})$ and $k\in (1,\frac{\chi}{(\chi-\mu)_{+}})$.
}

  To this end, for any $\varepsilon>0$, pick  $k=\frac{\chi}{(\chi-\mu)_{+}}-\varepsilon$ in  \dref{cz2sssssss.5xx1}, then, $-\mu+\frac{k-1}{k }\chi<0$ (by $0<\mu<\chi$), so that, \dref{cz2sssssss.5xx1} implies that there exists a positive constant $C_1$ such that
\begin{equation}
\begin{array}{rl}
&\disp{\frac{1}{{k}}\|u(\cdot,t) \|^{{{k}}}_{L^{{k}}(\Omega)}+C_ D(k-1)\int_{0}^t
e^{-( { {k}+1})(t-s)}\int_{\Omega}u^{m+k-3}|\nabla u|^2dxds }
\\
&+\disp{\frac{1}{2}[\mu-\frac{({k}-1)}{{k}}\chi]\int_{0}^t
e^{-( {k}+1)(t-s)}\int_\Omega u^{{{k}+1}}dxds }\\
\leq&\disp{C_1~~~\mbox{for all}~~t\in (0, T_{max})}\\
\end{array}
\label{cz2.5sssssxx1}
\end{equation}
by using the Young inequality.
Applying the Gronwall lemma to  \dref{cz2.5sssssxx1}, we derive
\begin{equation}
\|u(\cdot, t)\|_{L^{\frac{\chi}{(\chi-\mu)_{+}}-\varepsilon}(\Omega)}\leq C_2~~\mbox{for all}~~ t\in(0,T_{max}),
\label{zjscz2.5297xssffggg96ssdddd30111}
\end{equation}
which combined with  the arbitrariness of $\varepsilon$ and the H\"{o}lder  inequality yields to for any $k\in (1,\frac{\chi}{(\chi-\mu)_{+}})$,
\begin{equation}
\|u(\cdot, t)\|_{L^{k}(\Omega)}\leq C_3~~\mbox{for all}~~ t\in(0,T_{max}).
\label{zjscz2.5297xssss96ssdddd30111}
\end{equation}

{\bf Step 2. The boundedness of $\|u(\cdot, t)\|_{L^{k}(\Omega)}$
for all $t\in (0, T_{max})$ and $k\in (1,\frac{\chi}{(\chi-\mu)_{+}}]$.}

To achieve this, by step 1,
we may pick $k_0\in (\max\{1,\frac{\chi}{(\chi-\mu)_{+}}-\frac{N}{2}\},\frac{\chi}{(\chi-\mu)_{+}})$ such that
\begin{equation}
\|u(\cdot, t)\|_{L^{k_0}(\Omega)}\leq C_4~~\mbox{for all}~~ t\in(0,T_{max}).
\label{zjscz2.5297x96ssdddd30111}
\end{equation}
Now, set
$k=\frac{\chi}{(\chi-\mu)_{+}}$ in  \dref{cz2sssssss.5xx1}, then, $-\mu+\frac{k-1}{k }\chi=0$ (by $0<\mu<\chi$), so that, \dref{cz2sssssss.5xx1} implies that
\begin{equation}
\begin{array}{rl}
&\disp{\frac{1}{{k}}\|u(\cdot,t) \|^{{{k}}}_{L^{{k}}(\Omega)}+C_ D(k-1)\int_{0}^t
e^{-( { {k}+1})(t-s)}\int_{\Omega}u^{m+k-3}|\nabla u|^2dxds}
\\
\leq&\disp{ \rho_0\int_{0}^t
e^{-( { {k}+1})(t-s)}\int_\Omega u^{{{k}}} dxds+\rho_1~~~\mbox{for all}~~t\in (0, T_{max}).}\\
\end{array}
\label{cz2ssssse45677ss.5ssdddxx1}
\end{equation}
Now, observe that $m\geq2-\frac{2}{N}\frac{\chi}{(\chi-\mu)_{+}}$ and $k_0>\max\{1,\frac{\chi}{(\chi-\mu)_{+}}-\frac{N}{2}\}$ implies that
$$
\begin{array}{rl}
\disp{m+k-1+\frac{2}{N}\times k_0}&\geq{2-\frac{2}{N}\frac{\chi}{(\chi-\mu)_{+}}+k-1+\frac{2}{N}\times k_0}\\
&={k+1-\frac{2}{N}\frac{\chi}{(\chi-\mu)_{+}}+\frac{2}{N}\times k_0}\\
&>{k+1-\frac{2}{N}\frac{\chi}{(\chi-\mu)_{+}}+\frac{2}{N}\times (\frac{\chi}{(\chi-\mu)_{+}}-\frac{N}{2})}\\
&={k,}\\
\end{array}
$$
therefore, in view of \dref{zjscz2.5297x96ssdddd30111}, a use of the Gagliardo-Nirenberg inequality to \dref{cz2ssssse45677ss.5ssdddxx1} implies
 that
  there exist positive constants  $C_{5}$ and $C_6$ such that
\begin{equation}
\begin{array}{rl}
&\disp \rho_0\int_{\Omega} u^{k}\\
=&\disp{\|  u^{\frac{m+k-1}{2}}\|^{\frac{2{k}}{m+k-1}}_{L^{\frac{2{k}}{m+k-1}}(\Omega)}}\\
\leq&\disp{C_{5}(\|\nabla    u^{\frac{m+k-1}{2}}\|_{L^2(\Omega)}^{\frac{m+k-1}{k}\frac{N(k-k_0)}{N(m+k-1)+(2-N)k_0}}\|   u^{\frac{m+k-1}{2}}\|_{L^\frac{2k_0}{m+k-1 }(\Omega)}^{1-\frac{m+k-1}{k}\frac{N(k-k_0)}{N(m+k-2)+(2-N)k_0}}}\\
&\disp{+\|   u^{\frac{m+k-1}{2}}\|_{L^\frac{2k_0}{m+k-1 }(\Omega)})^{\frac{2{k}}{m+k-1}}}\\
\leq&\disp{C_{6}(\|\nabla    u^{\frac{m+k-1}{2}}\|_{L^{2}(\Omega)}^{\frac{2N(k-k_0)}{N(m+k-1)+(2-N)k_0}}+1),}\\
\end{array}
\label{123cz2.57151hhdfkkhjdsssdffgukildrftjj}
\end{equation}
where combined with ${\frac{2N(k-k_0)}{N(m+k-1)+(2-N)k_0}}<2$ (by $m+k-1+\frac{2}{N}\times k_0>k$) implies that
$$
\begin{array}{rl}
\disp \rho_0\int_{\Omega} u^{k}\leq\disp{\frac{2C_ D(k-1)}{(m+k-1)^2}\|\nabla    u^{\frac{m+k-1}{2}}\|_{L^{2}(\Omega)}^{2}+C_7,}\\
\end{array}
$$
for some positive constant $C_7$.
Substituting the above inequality into \dref{cz2ssssse45677ss.5ssdddxx1},  one can easily deduce that there exists a positive constant $C_8$ such that
\begin{equation}
\begin{array}{rl}
\|u(\cdot, t)\|_{L^{k_0}(\Omega)}\leq C_8~~\mbox{for all}~~ t\in(0,T_{max}).
\end{array}
\label{2.sssssssx1yy}
\end{equation}

{\bf Case $\mu\geq\chi:$} In view of $1<k<\frac{\chi}{(\chi-\mu)_{+}},$
$$-\mu+\frac{k-1}{k }\chi<0,$$
%
%
%
so that, \dref{cz2sssssss.5xx1} and  the Young inequality yields to
$$
\begin{array}{rl}
&\disp{\frac{1}{{k}}\|u(\cdot,t) \|^{{{k}}}_{L^{{k}}(\Omega)}+C_ D(k-1)\int_{0}^t
e^{-( { {k}+1})(t-s)}\int_{\Omega}u^{m+k-3}|\nabla u|^2dxds }
\\
&+\disp{\frac{1}{2}[\mu-\frac{({k}-1)}{{k}}\chi]\int_{0}^t
e^{-( {k}+1)(t-s)}\int_\Omega u^{{{k}+1}}dxds }\\
\leq&\disp{C_9~~~\mbox{for all}~~t\in (0, T_{max}).}\\
\end{array}
$$
 This completes the proof.
\end{proof}

\begin{lemma}\label{qqqqlemma45630}
Let $(u,v,w)$ be a solution to \dref{1.1} on $(0,T_{max})$ and $\theta_0=\frac{\max_{s\geq1}\lambda_0^{\frac{1}{{{s}}+1}}(\chi+\xi\|w_0\|_{L^\infty(\Omega)})}{\left[\max_{s\geq1}
 \lambda_0^{\frac{1}{{{s}}+1}}(\chi+\xi\|w_0\|_{L^\infty(\Omega)})-\mu\right]_{+}}$. If $\tau=1$ and $\mu>0,$
  then 
 for any
 \begin{equation}
k\in\left\{\begin{array}{ll}
(1,\theta_0],~~\mbox{if}~~
0<\mu<\max_{s\geq1}
 \lambda_0^{\frac{1}{{{s}}+1}}(\chi+\xi\|w_0\|_{L^\infty(\Omega)})~~{and}~~m\geq2-\frac{2}{N}\theta_0,\\
(1,\theta_0),~~\mbox{if}~~
\mu\geq\max_{s\geq1}\lambda_0^{\frac{1}{{{s}}+1}}(\chi+\xi\|w_0\|_{L^\infty(\Omega)}),\\
 \end{array}\right.
\label{zjscz2.52ddfff97xsss9630111}
\end{equation}
there exists a positive constant $C$ which depends on  $k$ such that
\begin{equation}
\|u(\cdot, t)\|_{L^k(\Omega)}\leq C ~~\mbox{for all}~~ t\in(0,T_{max})
\label{qqqqzjscz2.5297x9630111}
\end{equation}
holds.
\end{lemma}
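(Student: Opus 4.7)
The plan is to mirror the argument of Lemma \ref{lemma45630} almost verbatim, replacing the $\tau=0$ quantity $\chi$ by the $\tau=1$ quantity $\kappa_0=\max_{s\geq 1}\lambda_0^{1/(s+1)}(\chi+\xi\|w_0\|_{L^\infty(\Omega)})$ throughout. The starting point is Lemma \ref{3455667lemma45630}, which for every $k>1$ delivers the master estimate
\begin{equation*}
\frac{1}{k}\|u(\cdot,t)\|^{k}_{L^{k}(\Omega)}+C_{D}(k-1)\int_{0}^{t}e^{-(k+1)(t-s)}\int_{\Omega}u^{m+k-3}|\nabla u|^{2}\,dx\,ds\leq \left[\frac{k-1}{k}\kappa_0-\mu\right]\int_{0}^{t}e^{-(k+1)(t-s)}\int_{\Omega}u^{k+1}\,dx\,ds+\rho_{0}\int_{0}^{t}e^{-(k+1)(t-s)}\int_{\Omega}u^{k}\,dx\,ds+\rho_{1}.
\end{equation*}
The sign of the prefactor $\frac{k-1}{k}\kappa_0-\mu$ drives the whole case split.

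When $\mu\geq\kappa_0$, for every $k>1$ one has $\frac{k-1}{k}\kappa_0-\mu\leq-\frac{\kappa_0}{k}<0$, so a single application of Young's inequality absorbs $\rho_{0}\int_{\Omega}u^{k}$ into half of the (negative) $\int_{\Omega}u^{k+1}$ term, and Gronwall's inequality concludes $\|u(\cdot,t)\|_{L^{k}(\Omega)}\leq C(k)$ for every $k\in(1,\theta_0)=(1,+\infty)$. In the subcritical range $0<\mu<\kappa_0$ with $k\in(1,\theta_0)$ the same Young-plus-Gronwall recipe works, because there one still has $\frac{k-1}{k}\kappa_0<\mu$. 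In particular, we may fix some $k_{0}\in(\max\{1,\theta_0-\tfrac{N}{2}\},\theta_0)$ and obtain a uniform bound on $\|u(\cdot,t)\|_{L^{k_{0}}(\Omega)}$.

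The delicate point is the critical value $k=\theta_0$, at which $\frac{k-1}{k}\kappa_0-\mu\equiv 0$, destroying the good $u^{k+1}$ dissipation. The master estimate then reduces to
\begin{equation*}
\frac{1}{\theta_0}\|u(\cdot,t)\|^{\theta_0}_{L^{\theta_0}(\Omega)}+C_{D}(\theta_0-1)\int_{0}^{t}e^{-(\theta_0+1)(t-s)}\int_{\Omega}u^{m+\theta_0-3}|\nabla u|^{2}\,dx\,ds\leq\rho_{0}\int_{0}^{t}e^{-(\theta_0+1)(t-s)}\int_{\Omega}u^{\theta_0}\,dx\,ds+\rho_{1}.
\end{equation*}
To close this, I would rewrite $\int_{\Omega}u^{\theta_0}=\|u^{(m+\theta_0-1)/2}\|^{2\theta_0/(m+\theta_0-1)}_{L^{2\theta_0/(m+\theta_0-1)}(\Omega)}$ and apply the Gagliardo--Nirenberg inequality (Lemma \ref{lemma41}) with $\theta=2k_{0}/(m+\theta_0-1)$, interpolating against $\|\nabla u^{(m+\theta_0-1)/2}\|_{L^{2}(\Omega)}$. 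The assumption $m\geq 2-\frac{2}{N}\theta_0$ combined with $k_{0}>\theta_0-\tfrac{N}{2}$ forces $m+\theta_0-1+\tfrac{2}{N}k_{0}>\theta_0$, which in turn guarantees that the resulting Gagliardo--Nirenberg exponent on the gradient norm is strictly less than $2$. Together with the $L^{k_{0}}$ bound from the subcritical step and Young's inequality, $\rho_{0}\int_{\Omega}u^{\theta_0}$ is then absorbed into the dissipation modulo a constant, and Gronwall's inequality gives \eqref{qqqqzjscz2.5297x9630111} at $k=\theta_0$.

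The main obstacle is exactly this critical endpoint. Before running the Gagliardo--Nirenberg interpolation one must already know an $L^{k_{0}}$ bound with $k_{0}$ strictly bigger than $\theta_0-\tfrac{N}{2}$, which is why the subcritical bootstrap must be performed first and why the sharp condition $m\geq 2-\tfrac{2}{N}\theta_0$ cannot be relaxed: any smaller exponent $m$ would push the Gagliardo--Nirenberg exponent above $2$ and the absorption into the dissipation would break down, leaving the critical endpoint $L^{\theta_0}$ out of reach.
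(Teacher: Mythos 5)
Your proposal is correct and follows essentially the same route as the paper: the master inequality of Lemma \ref{3455667lemma45630} with $\tau=1$, a Young-plus-absorption argument for subcritical $k$ (and for all $k$ when $\mu\geq\kappa_0$), and then, at the critical exponent $k=\theta_0$ where the $u^{k+1}$ coefficient vanishes, a Gagliardo--Nirenberg interpolation of $\rho_0\int_\Omega u^{\theta_0}$ against the previously obtained $L^{k_0}$ bound with $k_0>\max\{1,\theta_0-\tfrac N2\}$, using $m\geq 2-\tfrac2N\theta_0$ to keep the gradient exponent below $2$ and absorb into the dissipation. This matches the paper's two-step proof of Lemma \ref{qqqqlemma45630}.
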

\begin{proof}
Firstly, due to Lemma \ref{3455667lemma45630} as well as $\tau=1$   and \dref{113344ddff4dffgg4zjscz2ddd.5297x96302222114}, we have
\begin{equation}
\begin{array}{rl}
&\disp{\frac{1}{{k}}\|u(\cdot,t) \|^{{{k}}}_{L^{{k}}(\Omega)}+C_ D(k-1)\int_{0}^t
e^{-( { {k}+1})(t-s)}\int_{\Omega}u^{m+k-3}|\nabla u|^2dxds}
\\
\leq&\disp{[\frac{({k}-1)}{{k}}\max_{s\geq1}
 \lambda_0^{\frac{1}{{{s}}+1}}(\chi+\xi\|w_0\|_{L^\infty(\Omega)})- \mu]\int_{0}^t
e^{-( {k}+1)(t-s)}\int_\Omega u^{{{k}+1}}dxds }\\
&+\disp{ \rho_0\int_{0}^t
e^{-( { {k}+1})(t-s)}\int_\Omega u^{{{k}}} dxds+\rho_1~~~\mbox{for all}~~t\in (0, T_{max}),}\\
\end{array}
\label{5677888cz2sssssss.5xx1}
\end{equation}
where $\rho_0$ and  $\rho_0$ are the same as  Lemma \ref{3455667lemma45630}.
In the sequel, we wish to bound the  terms on the right-hand side of \dref{5677888cz2sssssss.5xx1}
in terms of the dissipation term on its left-hand side.

{\bf Case $\mu<\max_{s\geq1}
 \lambda_0^{\frac{1}{{{s}}+1}}(\chi+\xi\|w_0\|_{L^\infty(\Omega)})$:}
 
 {\bf Step 1. The boundedness of $\|u(\cdot, t)\|_{L^{k_0}(\Omega)}$
for all $t\in (0, T_{max})$ and $k_0\in (\max\{1,\theta_0-\frac{N}{2}\},\theta_0)$ with $\theta_0=\frac{\max_{s\geq1}\lambda_0^{\frac{1}{{{s}}+1}}(\chi+\xi\|w_0\|_{L^\infty(\Omega)})}{\left[\max_{s\geq1}
 \lambda_0^{\frac{1}{{{s}}+1}}(\chi+\xi\|w_0\|_{L^\infty(\Omega)})-\mu\right]_{+}}.$}

 To this end, for any $\varepsilon>0$, pick  $k=\theta_0-\varepsilon$ in  \dref{5677888cz2sssssss.5xx1}, then, $$-\mu+\frac{({k}-1)}{{k}}\max_{s\geq1}\lambda_0^{\frac{1}{{{s}}+1}}(\chi+\xi\|w_0\|_{L^\infty(\Omega)})<0~~~(\mbox{by}~~0<\mu<\mu<\max_{s\geq1}
 \lambda_0^{\frac{1}{{{s}}+1}}(\chi+\xi\|w_0\|_{L^\infty(\Omega)}),$$
 so that, \dref{5677888cz2sssssss.5xx1} implies that for some positive constant $C_1$ such that
 \begin{equation}
\begin{array}{rl}
&\disp{\frac{1}{{k}}\|u(\cdot,t) \|^{{{k}}}_{L^{{k}}(\Omega)}}
\\
\leq&\disp{\frac{1}{2}[\frac{({k}-1)}{{k}}\max_{s\geq1}\lambda_0^{\frac{1}{{{s}}+1}}(\chi+\xi\|w_0\|_{L^\infty(\Omega)})- \mu]\int_{0}^t
e^{-( {k}+1)(t-s)}\int_\Omega u^{{{k}+1}}dxds +C_1}\\
\end{array}
\label{czssddssddffggf2sssss.5ksdddk1214114114rrggkkll}
\end{equation}
by using the Young inequality.
This combined with  the arbitrariness of $\varepsilon$ and the H\"{o}lder  inequality yields to for any $k_0\in (\max\{1,\theta_0-\frac{N}{2}\},\theta_0)$,
\begin{equation}
\|u(\cdot, t)\|_{L^{k_0}(\Omega)}\leq C_2~~\mbox{for all}~~ t\in(0,T_{max})
\label{zjscz2.5297x96ssdddd30ssss111}
\end{equation}
and some positive constant $C_2$.

{\bf Step 2. The boundedness of $\|u(\cdot, t)\|_{L^{k}(\Omega)}$
for all $t\in (0, T_{max})$ and $k\in (1,\theta_0]$, where $\theta_0=\frac{\max_{s\geq1}\lambda_0^{\frac{1}{{{s}}+1}}(\chi+\xi\|w_0\|_{L^\infty(\Omega)})}{\left[\max_{s\geq1}
 \lambda_0^{\frac{1}{{{s}}+1}}(\chi+\xi\|w_0\|_{L^\infty(\Omega)})-\mu\right]_{+}}$.}

To achieve this, we pick $k=\theta_0$ in  \dref{5677888cz2sssssss.5xx1}, then,
 $-\mu+\frac{({k}-1)}{{k}}\max_{s\geq1}\lambda_0^{\frac{1}{{{s}}+1}}(\chi+\xi\|w_0\|_{L^\infty(\Omega)})=0$, so that, by \dref{5677888cz2sssssss.5xx1}, we have
 \begin{equation}
\begin{array}{rl}
&\disp{\frac{1}{{k}}\|u(\cdot,t) \|^{{{k}}}_{L^{{k}}(\Omega)}+C_ D(k-1)\int_{0}^t
e^{-( { {k}+1})(t-s)}\int_{\Omega}u^{m+k-3}|\nabla u|^2dxds}
\\
\leq&\disp{ \rho_0\int_{0}^t
e^{-( { {k}+1})(t-s)}\int_\Omega u^{{{k}}} dxds+\rho_1~~\mbox{for all}~~ t\in(0,T_{max}).}\\
\end{array}
\label{czssddssddffggf2.5kk1214114114rrsssddggkkll}
\end{equation}
In the following, we shall apply the Gagliardo-Nirenberg interpolation inequality to
control the second integral on the right-hand side of \dref{czssddssddffggf2.5kk1214114114rrsssddggkkll}.
To this end, in view of $m\geq2-\frac{2}{N}\theta_0$ and $k_0>\max\{1,\theta_0-\frac{N}{2}\}$ implies that
$$m+k-1+\frac{2}{N}\times k_0>k,$$
therefore, in view of \dref{zjscz2.5297x96ssdddd30ssss111},
 we deduce from the Gagliardo--Nirenberg inequality 
 that
  there exist positive constants  $C_{3}$ and $C_4$ such that
\begin{equation}
\begin{array}{rl}
&\disp \rho_0\int_{\Omega} u^{k}\\
=&\disp{\|  u^{\frac{m+k-1}{2}}\|^{\frac{2{k}}{m+k-1}}_{L^{\frac{2{k}}{m+k-1}}(\Omega)}}\\
\leq&\disp{C_{3}(\|\nabla    u^{\frac{m+k-1}{2}}\|_{L^2(\Omega)}^{\frac{m+k-1}{k}\frac{N(k-k_0)}{N(m+k-1)+(2-N)k_0}}\|   u^{\frac{m+k-1}{2}}\|_{L^\frac{2k_0}{m+k-1 }(\Omega)}^{1-\frac{m+k-1}{k}\frac{N(k-k_0)}{N(m+k-2)+(2-N)k_0}}}\\
&\disp{+\|   u^{\frac{m+k-1}{2}}\|_{L^\frac{2k_0}{m+k-1 }(\Omega)})^{\frac{2{k}}{m+k-1}}}\\
\leq&\disp{C_{4}(\|\nabla    u^{\frac{m+k-1}{2}}\|_{L^{2}(\Omega)}^{\frac{2N(k-k_0)}{N(m+k-1)+(2-N)k_0}}+1),}\\
\end{array}
\label{ssddd123cz2.57151hhdfkkhjdsssdffgukildrftjj}
\end{equation}
which, together with the fact
$${\frac{2N(k-k_0)}{N(m+k-1)+(2-N)k_0}}<2 ~(\mbox{by}~~ m+k-1+\frac{2}{N}\times k_0>k),$$
immediately gives that
$$
\begin{array}{rl}
\disp \rho_0\int_{\Omega} u^{k}\leq\disp{\frac{2C_ D(k-1)}{(m+k-1)^2}\|\nabla    u^{\frac{m+k-1}{2}}\|_{L^{2}(\Omega)}^{2}+C_5}\\
\end{array}
$$
and some positive constant $C_5$.
Substituting the above inequality into \dref{czssddssddffggf2.5kk1214114114rrsssddggkkll}, we  can get
\dref{qqqqzjscz2.5297x9630111}.

%

{\bf Case $\mu\geq\max_{s\geq1}
 \lambda_0^{\frac{1}{{{s}}+1}}(\chi+\xi\|w_0\|_{L^\infty(\Omega)})$:} 
For any $\varepsilon>0,$
we choose $$k=\frac{\max_{s\geq1}\lambda_0^{\frac{1}{{{s}}+1}}(\chi+\xi\|w_0\|_{L^\infty(\Omega)})}
{\left[\max_{s\geq1}\lambda_0^{\frac{1}{{{s}}+1}}(\chi+\xi\|w_0\|_{L^\infty(\Omega)})-\mu\right]_{+}}-\varepsilon.$$
Then $$\frac{({k}-1)}{{k}}\max_{s\geq1}\lambda_0^{\frac{1}{{{s}}+1}}(\chi+\xi\|w_0\|_{L^\infty(\Omega)})<\mu,$$
so that,  \dref{5677888cz2sssssss.5xx1} yields to for some positive constant $C_6$ such that
$$
\begin{array}{rl}
&\disp{\frac{1}{{k}}\|u(\cdot,t) \|^{{{k}}}_{L^{{k}}(\Omega)}+C_ D(k-1)\int_{0}^t
e^{-( { {k}+1})(t-s)}\int_{\Omega}u^{m+k-3}|\nabla u|^2dxds }
\\
&+\disp{\frac{1}{2}[\mu-\frac{({k}-1)}{{k}}\max_{s\geq1}
 \lambda_0^{\frac{1}{{{s}}+1}}(\chi+\xi\|w_0\|_{L^\infty(\Omega)})]\int_{0}^t
e^{-( {k}+1)(t-s)}\int_\Omega u^{{{k}+1}}dxds }\\
\leq&\disp{C_6~~~\mbox{for all}~~t\in (0, T_{max})}\\
\end{array}
$$
by using Young inequality.
The proof Lemma \ref{qqqqlemma45630} is completed. 
\end{proof}
.
\begin{lemma}\label{lessdddmma45630}
Let $(u,v,w)$ be a solution to \dref{1.1} on $(0,T_{max})$. If  $0<\mu<\kappa_0$ and $m\geq 2-\frac{2}{N}\lambda$,
then there exist positive constants $\alpha_0>\lambda$ and $C$  which satisfy
\begin{equation}
\|u(\cdot, t)\|_{L^{\alpha_0}(\Omega)}\leq C ~~\mbox{for all}~~ t\in(0,T_{max}),
\label{zjscz2.529dfgggg7x9630111}
\end{equation}
 where
\begin{equation}
 \lambda=\frac{\kappa_0}{(\kappa_0-\mu)_+}
\label{3344ddff4dffgg4zjscz2.5297x96302222114}
\end{equation}
and
 $\kappa_0$ is the same as
\dref{113344ddff4dffgg4zjscz2ddd.5297x96302222114}.
\end{lemma}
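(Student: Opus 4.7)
The plan is to apply the entropy-like inequality of Lemma \ref{3455667lemma45630} with a slightly super-critical exponent $k=\lambda+\varepsilon$, and to exploit the fact that the coefficient
\[
\frac{k-1}{k}\kappa_0-\mu=(\kappa_0-\mu)-\frac{\kappa_0}{k}
\]
vanishes precisely at $k=\lambda=\kappa_0/(\kappa_0-\mu)_+$, so that it is of order $\varepsilon$ as $\varepsilon\downarrow 0$. Since Lemmas \ref{lemma45630} and \ref{qqqqlemma45630} already supply a uniform $L^{\lambda}(\Omega)$-bound on $u$ under the standing hypotheses on $\mu,\chi,\xi,m$, I would feed this bound into a Gagliardo--Nirenberg estimate applied to $z:=u^{(m+k-1)/2}$, after rewriting the dissipation term on the left of \dref{czssddssddffggf2.3456675kk1214114114rrggkkll} as $\frac{4C_D(k-1)}{(m+k-1)^2}\|\nabla z\|_{L^2(\Omega)}^2$.

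Applying Lemma \ref{lemma41} to $z$ with low exponent $\theta=\frac{2\lambda}{m+k-1}$, a direct computation identifies the effective $\|\nabla z\|_{L^2(\Omega)}$-exponents that arise in the Gagliardo--Nirenberg bounds for $\int_\Omega u^{k+1}$ and $\int_\Omega u^k$ as
\[
\frac{2N(k+1-\lambda)}{N(m+k-1)+(2-N)\lambda}\qquad\mbox{and}\qquad\frac{2N(k-\lambda)}{N(m+k-1)+(2-N)\lambda},
\]
respectively. Under the hypothesis $m\geq 2-\frac{2}{N}\lambda$ one checks that the first expression is $\leq 2$, with equality only at $m=2-\frac{2}{N}\lambda$, while the second is strictly less than $2$ for every $k>\lambda$. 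Hence if $m>2-\frac{2}{N}\lambda$ strictly, Young's inequality absorbs both $\int_\Omega u^{k+1}$ and $\int_\Omega u^{k}$ into a small multiple of the dissipation for any $\varepsilon>0$, and the desired $L^{\lambda+\varepsilon}$-bound follows directly.

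The hard part will be the borderline case $m=2-\frac{2}{N}\lambda$, in which Gagliardo--Nirenberg only yields an inequality of the shape
\[
\int_\Omega u^{k+1}\leq C_{*}(k,\|u\|_{L^{\lambda}(\Omega)})\,\|\nabla z\|_{L^2(\Omega)}^2+C_{**}(k,\|u\|_{L^{\lambda}(\Omega)}),
\]
so that no smallness can be squeezed out of Young's inequality alone. I would compensate for this by exploiting the $O(\varepsilon)$ smallness of the prefactor $\frac{k-1}{k}\kappa_0-\mu$: choosing $\varepsilon\in(0,\varepsilon_0)$ so small that $\bigl(\frac{k-1}{k}\kappa_0-\mu\bigr)\,C_{*}\leq \frac{2C_D(k-1)}{(m+k-1)^2}$ allows the critical term to be swallowed by one half of the dissipation, while the residual $\int_\Omega u^k$ contribution is absorbed by Young's inequality via its strictly subcritical GN exponent.

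Inserting these bounds back into \dref{czssddssddffggf2.3456675kk1214114114rrggkkll} and using the elementary estimate $\int_0^t e^{-(k+1)(t-s)}\,ds\leq 1/(k+1)$ then produces a uniform bound $\|u(\cdot,t)\|_{L^{k}(\Omega)}\leq C$ on $(0,T_{max})$ with $\alpha_0:=k=\lambda+\varepsilon>\lambda$, which is the claim. The main subtlety to verify is that the constants $C_{*},C_{**}$ (which involve $C_{GN}$ and a power of $\|u\|_{L^{\lambda}(\Omega)}$) as well as $\rho_0,\rho_1$ from Lemma \ref{3455667lemma45630} remain controlled as $\varepsilon\downarrow 0$, so that the smallness condition on $\varepsilon$ singled out above can legitimately be achieved.
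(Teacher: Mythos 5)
Your proposal is correct and follows essentially the same route as the paper: a uniform $L^{\lambda}(\Omega)$-bound from Lemmas \ref{lemma45630} and \ref{qqqqlemma45630} is fed into the Gagliardo--Nirenberg inequality for $u^{\frac{m+k-1}{2}}$, the hypothesis $m\geq 2-\frac{2}{N}\lambda$ guarantees the resulting gradient exponent is at most $2$, and the vanishing of $\frac{k-1}{k}\kappa_0-\mu$ at $k=\lambda$ (exploited in the paper as a continuity/limit argument in \dref{cz2.5sssssxsdddfffsdddx1}--\dref{cz2sfggg.5sssssxsdddsdddx1}, in your write-up as the $O(\varepsilon)$ smallness for $k=\lambda+\varepsilon$) lets the dissipation absorb the critical term. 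The only cosmetic difference is that the paper absorbs the $\int_\Omega u^{k}$ term into $\int_\Omega u^{k+1}$ by Young's inequality with a small $\delta_1$ rather than estimating it by a second, strictly subcritical Gagliardo--Nirenberg application as you do; both devices work.
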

\begin{proof}
Firstly, by \dref{3344ddff4dffgg4zjscz2.5297x96302222114}
and \dref{113344ddff4dffgg4zjscz2ddd.5297x96302222114}, 
in view of Lemma \ref{lemma45630} and Lemma \ref{qqqqlemma45630} , we deduce that
\begin{equation}
\|u(\cdot, t)\|_{L^{\lambda}(\Omega)}\leq C_1 ~~\mbox{for all}~~ t\in(0,T_{max})
\label{zjscz2.529dfgggg7x9630dddd111}
\end{equation}
and some positive constant $C_1,$  where $\lambda$ is given by \dref{3344ddff4dffgg4zjscz2.5297x96302222114}.
On the other hand,   by Lemma  \ref{3455667lemma45630}, we obtain that
$$
\begin{array}{rl}
&\disp{\frac{1}{{k}}\|u(\cdot,t) \|^{{{k}}}_{L^{{k}}(\Omega)}+C_ D(k-1)\int_{0}^t
e^{-( { {k}+1})(t-s)}\int_{\Omega}u^{m+k-3}|\nabla u|^2dxds}
\\
\leq&\disp{[\frac{({k}-1)}{{k}}\kappa_0- \mu]\int_{0}^t
e^{-( {k}+1)(t-s)}\int_\Omega u^{{{k}+1}}dxds }\\
&+\disp{ \rho_0\int_{0}^t
e^{-( { {k}+1})(t-s)}\int_\Omega u^{{{k}}} dxds+\rho_1~~~\mbox{for all}~~t\in (0, T_{max}),}\\
\end{array}
$$
where $\kappa_0,\rho_0$ and $\rho_1$ are the same as Lemma  \ref{3455667lemma45630}.
This combined with the Young inequality implies that for any $\delta_1>0,$
\begin{equation}
\begin{array}{rl}
&\disp{\frac{1}{{k}}\|u(\cdot,t) \|^{{{k}}}_{L^{{k}}(\Omega)}+C_ D(k-1)\int_{0}^t
e^{-( { {k}+1})(t-s)}\int_{\Omega}u^{m+k-3}|\nabla u|^2dxds}
\\
\leq&\disp{[\frac{({k}-1)}{{k}}\kappa_0+\delta_1- \mu]\int_{0}^t
e^{-( {k}+1)(t-s)}\int_\Omega u^{{{k}+1}}dxds +C_1~~~\mbox{for all}~~t\in (0, T_{max})}\\
\end{array}
\label{czssddssddssfffgffggf2.3456675kk1214114114rrggkkll}
\end{equation}
and  some positive constant $C_1$.
Next,  in view of \dref{zjscz2.529dfgggg7x9630dddd111},
 we conclude from the Gagliardo--Nirenberg inequality 
 that
  there exist positive constants  $C_{2}=C_{2}(k)$ and $C_3=C_{3}(k)$ such that
\begin{equation}
\begin{array}{rl}
&\disp \int_{\Omega} u^{k+1}\\
=&\disp{\|  u^{\frac{m+k-1}{2}}\|^{\frac{2{(k+1)}}{m+k-1}}_{L^{\frac{2{(k+1)}}{m+k-1}}(\Omega)}}\\
\leq&\disp{C_{2}(\|\nabla    u^{\frac{m+k-1}{2}}\|_{L^2(\Omega)}^{\frac{N(m+k-1)}{k+1}\frac{k+1-\lambda}{\lambda(2-N)+N(m+k-1)}}\|   u^{\frac{m+k-1}{2}}\|_{L^\frac{2\lambda}{m+k-1 }(\Omega)}^{1-\frac{N(m+k-1)}{k+1}\frac{k+1-\lambda}{\lambda(2-N)+N(m+k-1)}}}\\
&\disp{+\|   u^{\frac{m+k-1}{2}}\|_{L^\frac{2\lambda}{m+k-1 }(\Omega)})^{\frac{2{(k+1)}}{m+k-1}}}\\
\leq&\disp{C_{3}(\|\nabla    u^{\frac{m+k-1}{2}}\|_{L^{2}(\Omega)}^{\frac{2N(k+1-\lambda)}{\lambda(2-N)+N(m+k-1)}}+1),}\\
\end{array}
\label{123cz2.57151hhdfkkhjsssdsssdffgukildrftjj}
\end{equation}
where  $\lambda$ is the same as  \dref{3344ddff4dffgg4zjscz2.5297x96302222114}.
Since $m\geq 2-\frac{2}{N}\lambda$, one can easily see that
$${\frac{2N(k+1-\lambda)}{\lambda(2-N)+N(m+k-1)}}\leq2,$$
so that, \dref{123cz2.57151hhdfkkhjsssdsssdffgukildrftjj} yields to
\begin{equation}
\begin{array}{rl}
 \disp \int_{\Omega} u^{k+1}\leq&\disp{C_{4}(\|\nabla    u^{\frac{m+k-1}{2}}\|_{L^{2}(\Omega)}^{2}+1)}\\
\end{array}
\label{123cz2.57151hhdfkkhjsssdsssdffddddgukildrftjj}
\end{equation}
for some positive constant $C_4(k)>0.$
Substituting \dref{123cz2.57151hhdfkkhjsssdsssdffddddgukildrftjj} into \dref{czssddssddssfffgffggf2.3456675kk1214114114rrggkkll}, we obatin that
\begin{equation}
\begin{array}{rl}
&\disp{\frac{1}{{k}}\|u(\cdot,t) \|^{{{k}}}_{L^{{k}}(\Omega)}}\\
&\disp{+[\frac{4C_ D(k-1)}{(m+k-1)^2}-(-\mu+\frac{k-1}{k }\kappa_0)C_4+\delta_1C_4]\int_{0}^t
e^{-( { {k}+1})(t-s)}\|\nabla (u+1)^{\frac{m+k-1}{2}}\|^2_{L^2(\Omega)}ds}
\\
\leq&\disp{C_5~~~\mbox{for all}~~t\in (0, T_{max})}\\
\end{array}
\label{ssdddcz2ssssssssesddffrsssssrrs.5xx1}
\end{equation}
and some positive constant $C_5$.
Let $k>\lambda$. Then by some basic calculation, we derive that
%
\begin{equation}\lim_{k\nearrow\lambda}[\frac{4C_ D(k-1)}{(m+k-1)^2}-(-\mu+\frac{k-1}{k }\kappa_0)C_4]=\frac{4C_ D(\lambda-1)}{(m+\lambda-1)^2}>0
\label{cz2.5sssssxsdddfffsdddx1}
\end{equation}
and
\begin{equation}\frac{4C_ D(k-1)}{(m+k-1)^2}-(-\mu+\frac{k-1}{k }\kappa_0)C_4>0~~~\mbox{for all}~~k>\lambda,
\label{cz2sfggg.5sssssxsdddsdddx1}
\end{equation}
where we have used the fact that $\mu<\kappa_0$.
Collecting \dref{123cz2.57151hhdfkkhjsssdsssdffddddgukildrftjj}--\dref{cz2sfggg.5sssssxsdddsdddx1},  we may choose $k>\lambda$ which is close to $\lambda$ such that
\begin{equation}\lim_{k\nearrow\lambda}[\frac{4C_ D(k-1)}{(m+k-1)^2}-(\mu+\frac{k-1}{k }\kappa_0)C_4]=\frac{2C_ D(\lambda-1)}{(m+\lambda-1)^2}.
\label{cz2.5sssssxsddssdddddfffsdddx1}
\end{equation}
Next, substitute \dref{cz2.5sssssxsddssdddddfffsdddx1} into \dref{ssdddcz2ssssssssesddffrsssssrrs.5xx1} and choose $\delta_1$ suitablely small (e.g.
$\delta_1<\frac{2C_ D(\lambda-1)}{C_4(m+\lambda-1)^2}$), then we have
\begin{equation}
\begin{array}{rl}
\|u(\cdot,t) \|_{L^{{k}}(\Omega)}\leq&\disp{C_6~~~\mbox{for all}~~t\in (0, T_{max})}\\
\end{array}
\label{ssdddcz2ssssssssssssesddffrsssssrrs.5xx1}
\end{equation}
and the proof of Lemma \ref{lessdddmma45630} is thus completed.
%
\end{proof}
Along with the basic estimate from Lemmas \ref{lemma45630}--\ref{lessdddmma45630}, this immediately implies the following Lemma:
\begin{lemma}\label{lemmasdffssdd45566645630223}
Assume that  $\mu>0$.
If
  \begin{equation}\label{gddffffnjjmmx1.731426677gg}
m\geq 2-\frac{2}{N}\lambda~~~\mbox{with}~~\mu<\kappa_0,
\begin{array}{ll}\\
 \end{array}
\end{equation}
then for $k>\max\{N+1,N(m+1)\}$, there exists a positive constant $C=C(k,|\Omega|,\mu,\lambda_0,\xi,\chi,m,C_{D} )$  
such that 
 the solution of \dref{1.1} from Lemma \ref{lemma70} satisfies
\begin{equation}
\int_{\Omega}u^{k}(x,t)dx\leq C ~~~\mbox{for all}~~ t\in(0,T_{max}),
\label{33444dffgg4zjscz2.5297x96302222114}
\end{equation}
where
$\lambda$ and $\kappa_0$ are given by \dref{3344ddff4dffgg4zjscz2.5297x96302222114}
and \dref{113344ddff4dffgg4zjscz2ddd.5297x96302222114}, respectively.

\end{lemma}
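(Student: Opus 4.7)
The plan is to bootstrap the a priori estimate of Lemma \ref{lessdddmma45630}---which delivers a uniform $L^{\alpha_0}(\Omega)$ bound on $u(\cdot,t)$ for some exponent $\alpha_0>\lambda$---all the way up to $L^k(\Omega)$ for arbitrary $k>\max\{N+1, N(m+1)\}$. As the backbone I would take the differential inequality supplied by Lemma \ref{3455667lemma45630}, which for every $k>1$ reads
\begin{equation*}
\begin{array}{rl}
&\disp{\frac{1}{k}\|u(\cdot,t)\|^{k}_{L^{k}(\Omega)}+C_D(k-1)\int_{0}^t e^{-(k+1)(t-s)}\int_{\Omega}u^{m+k-3}|\nabla u|^2\,dxds}\\
\leq&\disp{\bigl[\tfrac{k-1}{k}\kappa_0-\mu\bigr]\int_{0}^t e^{-(k+1)(t-s)}\int_\Omega u^{k+1}\,dxds}\\
&\disp{+\rho_0\int_{0}^t e^{-(k+1)(t-s)}\int_\Omega u^{k}\,dxds+\rho_1.}
\end{array}
\end{equation*}
Because $k>\lambda$, the bracketed coefficient is strictly positive, so the whole game is to absorb both superlinear right-hand integrals into the dissipative quantity $\|\nabla u^{(m+k-1)/2}\|_{L^2(\Omega)}^{2}$ that is hidden on the left.

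The key analytical step is a Gagliardo--Nirenberg interpolation (Lemma \ref{lemma41}) applied to $u^{(m+k-1)/2}$ with base level $L^{2\alpha_0/(m+k-1)}(\Omega)$. Writing $\int_\Omega u^{k+1}=\|u^{(m+k-1)/2}\|_{L^{2(k+1)/(m+k-1)}(\Omega)}^{2(k+1)/(m+k-1)}$ and using the uniform $L^{\alpha_0}$-bound from Lemma \ref{lessdddmma45630} yields
\begin{equation*}
\int_\Omega u^{k+1}\leq C_{\star}\bigl(\|\nabla u^{(m+k-1)/2}\|_{L^2(\Omega)}^{\theta_1}+1\bigr),\qquad \theta_1:=\frac{2N(k+1-\alpha_0)}{\alpha_0(2-N)+N(m+k-1)},
\end{equation*}
and an analogous estimate with a strictly smaller exponent $\theta_2<\theta_1$ for $\int_\Omega u^{k}$. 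A short algebraic manipulation shows that $\theta_1<2$ is equivalent to $\alpha_0>N(2-m)/2$. Invoking the hypothesis $m\geq 2-\tfrac{2}{N}\lambda$ gives $N(2-m)/2\leq\lambda$, and since Lemma \ref{lessdddmma45630} delivers $\alpha_0>\lambda$ strictly, one indeed has $\theta_1<2$ with room to spare.

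Once the strict inequality $\theta_1<2$ is secured, Young's inequality lets me absorb the chemotactic-like term,
\begin{equation*}
\bigl[\tfrac{k-1}{k}\kappa_0-\mu\bigr]\int_\Omega u^{k+1}\leq \frac{2C_D(k-1)}{(m+k-1)^{2}}\|\nabla u^{(m+k-1)/2}\|_{L^2(\Omega)}^{2}+C_{1}(k),
\end{equation*}
and likewise the lower-order term $\rho_0\int_\Omega u^{k}$. Substituting these two estimates back into the energy inequality leaves a strictly positive coefficient in front of the dissipative term on the left, and a standard Gronwall-type argument on the convolution integral in $t$ then produces the desired uniform bound $\|u(\cdot,t)\|_{L^k(\Omega)}\leq C$ on $(0,T_{max})$, i.e.\ \dref{33444dffgg4zjscz2.5297x96302222114}. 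The main obstacle is precisely the verification of strict inequality $\theta_1<2$: this is the structural reason why Lemma \ref{lessdddmma45630} had to push the base bound \emph{strictly} beyond the critical exponent $\lambda$, rather than merely \emph{up to} it. The technical restriction $k>\max\{N+1,N(m+1)\}$ enters at this point to ensure that the admissibility parameters in Lemma \ref{lemma41} (in particular the interpolation exponent $a\in(0,1)$) are in the legal range and that $m+k-1>0$ so that the dissipation indeed controls $\|\nabla u^{(m+k-1)/2}\|_{L^2(\Omega)}^{2}$.
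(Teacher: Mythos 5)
Your proposal is correct and follows essentially the same route as the paper: feed the $L^{\alpha_0}(\Omega)$ bound with $\alpha_0>\lambda$ from Lemma \ref{lessdddmma45630} into the energy inequality of Lemma \ref{3455667lemma45630}, interpolate $\int_\Omega u^{k+1}$ via Gagliardo--Nirenberg against the $L^{\alpha_0}$ level so that the gradient exponent stays strictly below $2$ (your equivalence $\theta_1<2\Leftrightarrow\alpha_0>N(2-m)/2$ is exactly the paper's use of $m\geq 2-\frac{2}{N}\lambda$ and $\alpha_0>\lambda$), and absorb by Young. The only cosmetic difference is that the paper first merges the $\rho_0\int_\Omega u^{k}$ term into the $u^{k+1}$ term by Young before interpolating, while you interpolate it separately; this changes nothing of substance.
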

\begin{proof}
Firstly, due to Lemma \ref{lessdddmma45630}, we derive that there exists a positive constant $C_1$ such that
\begin{equation}
\|u(\cdot, t)\|_{L^{\alpha_0}(\Omega)}\leq C_1 ~~\mbox{for all}~~ t\in(0,T_{max}),
\label{zjscz2.529dfgggg7x9ssdccvvvdf630111}
\end{equation}
where $\alpha_0$ is the same as Lemma \ref{lessdddmma45630}.
 Next,
 Lemma  \ref{3455667lemma45630} implies that
$$
\begin{array}{rl}
&\disp{\frac{1}{{k}}\|u(\cdot,t) \|^{{{k}}}_{L^{{k}}(\Omega)}+C_ D(k-1)\int_{0}^t
e^{-( { {k}+1})(t-s)}\int_{\Omega}u^{m+k-3}|\nabla u|^2dxds}
\\
\leq&\disp{[\frac{({k}-1)}{{k}}\kappa_0- \mu]\int_{0}^t
e^{-( {k}+1)(t-s)}\int_\Omega u^{{{k}+1}}dxds }\\
&+\disp{ \rho_0\int_{0}^t
e^{-( { {k}+1})(t-s)}\int_\Omega u^{{{k}}} dxds+\rho_1~~~\mbox{for all}~~t\in (0, T_{max}),}\\
\end{array}
$$
where $\kappa_0,\rho_0$ and $\rho_1$ are the same as Lemma  \ref{3455667lemma45630}.
Therefore, we derive from the Young inequality that there exists a positive constant $C_1$ such that
\begin{equation}
\begin{array}{rl}
&\disp{\frac{1}{{k}}\|u(\cdot,t) \|^{{{k}}}_{L^{{k}}(\Omega)}+C_ D(k-1)\int_{0}^t
e^{-( { {k}+1})(t-s)}\int_{\Omega}u^{m+k-3}|\nabla u|^2dxds}
\\
\leq&\disp{\kappa_0\int_{0}^t
e^{-( {k}+1)(t-s)}\int_\Omega u^{{{k}+1}}dxds+C_1~~~\mbox{for all}~~t\in (0, T_{max}).}\\
\end{array}
\label{1113333cz2.511sddff4114}
\end{equation}
For any $k>\max\{N+1,N(m+1),\alpha_0-1,1,1-m+\frac{N-2}{N}\alpha_0\}$, $m\geq2-\frac{2}{N}\lambda $  together with $\alpha_0>\lambda$
yields to
 $$k+1\leq m+k-1+\frac{2}{N}\lambda<m+k-1+\frac{2}{N}\alpha_0,$$
so that, in particular, according to
by the Gagliardo--Nirenberg inequality and \dref{zjscz2.529dfgggg7x9ssdccvvvdf630111}, 
one can get there exist positive constants  $C_2$ and
$C_3$ 
such that
\begin{equation}
\begin{array}{rl}
&\disp\kappa_0\int_{\Omega} u^{k+1}\\
=&\disp{\kappa_0\|  u^{\frac{m+k-1}{2}}\|^{\frac{2(k+1)}{m+k-1}}_{L^{\frac{2(k+1)}{m+k-1}}(\Omega)}}\\
\leq&\disp{C_{2}(\|\nabla    u^{\frac{m+k-1}{2}}\|_{L^2(\Omega)}^{\frac{m+k-1}{k+1}}\|   u^{\frac{m+k-1}{2}}\|_{L^\frac{2\alpha_0}{m+k-1 }(\Omega)}^{1-\frac{m+k-1}{k+1}}+\|   u^{\frac{m+k-1}{2}}\|_{L^\frac{2\alpha_0}{m+k-1 }(\Omega)})^{\frac{2(k+1)}{m+k-1}}}\\
\leq&\disp{C_{3}(\|\nabla    u^{\frac{m+k-1}{2}}\|_{L^{2}(\Omega)}^{2\frac{N(k+1)-N\alpha_0}{(2-N)\alpha_0+N(m+k-1)}}+1).}\\
\end{array}
\label{123cz2.57151hhdhhjjjdfffkkhhhjddffffgukildrftjj}
\end{equation}
In view of $m\geq2-\frac{2}{N}\lambda $ and $\alpha_0>\lambda$, by some basic
calculation,
we derive that $$\frac{N(k+1)-N\alpha_0}{(2-N)\alpha_0+N(m+k-1)}<1,$$
so that, which returns, using again the Young inequality,
%
for any $\delta_1>0$,
\begin{equation}
\begin{array}{rl}
&\disp\kappa_0\int_{\Omega} u^{k+1}\leq
\disp{\delta_1\|\nabla    u^{\frac{m+k-1}{2}}\|_{L^{2}(\Omega)}^{2}+C_{4}.}\\
\end{array}
\label{123cz2.571sddff51hhdhhjdfffkkhjdfffgukildrftjj}
\end{equation}
Combining the above three estimates and choosing $\delta_1$ appropriately small, we
arrive at for some positive constant $C_{5}$ such that   
  \begin{equation}
\int_{\Omega}u^{k}(x,t)dx\leq C_{5} ~~~\mbox{for all}~~ t\in(0,T_{max}),
\label{334444zjscz2.5297dfggggx96302222114}
\end{equation}
from which we readily infer \dref{33444dffgg4zjscz2.5297x96302222114}.
The proof of Lemma \ref{lemmasdffssdd45566645630223} is completed.
\end{proof}

\begin{lemma}\label{lemdddmasdffssdd45566645630223}
Assume that  $\mu>0$.
If
  \begin{equation}\label{gddffffnjjmmfffffhhjx1.731426677gg}
m>2-\frac{2}{N}\lambda~~~\mbox{and}~~\mu\geq\kappa_0,
\begin{array}{ll}\\
 \end{array}
\end{equation}
then 
for all $p>\max\{N+1,N(m+1)\}$, there exists a positive constant $C=C(p,|\Omega|,\mu,\lambda_0,\xi,\chi,m,C_{D} )$  
such that 
 the solution of \dref{1.1} from Lemma \ref{lemma70} satisfies
\begin{equation}
\int_{\Omega}u^{p}(x,t)dx\leq C ~~~\mbox{for all}~~ t\in(0,T_{max}),
\label{33444dfddrryuufgg4zjscz2.5297x96302222114}
\end{equation}
where $\lambda$ and $\kappa_0$ are given by \dref{3344ddff4dffgg4zjscz2.5297x96302222114}
and \dref{113344ddff4dffgg4zjscz2ddd.5297x96302222114}, respectively.
\end{lemma}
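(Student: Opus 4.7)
The plan is to exploit that under the hypothesis $\mu\geq\kappa_0$ the coefficient in front of the $u^{k+1}$-integral on the right-hand side of the basic estimate \dref{czssddssddffggf2.3456675kk1214114114rrggkkll} from Lemma \ref{3455667lemma45630} is strictly negative for every $k>1$, so that---in sharp contrast to Lemma \ref{lemmasdffssdd45566645630223}---no Gagliardo--Nirenberg interpolation through the critical $L^{\lambda}$-bound of Lemma \ref{lessdddmma45630} is required, and a direct Young absorption suffices. Indeed, $\mu\geq\kappa_0$ immediately gives
\[
\frac{k-1}{k}\kappa_0-\mu \le \frac{k-1}{k}\kappa_0-\kappa_0 = -\frac{\kappa_0}{k}<0 \quad\text{for every}~k>1,
\]
turning the $u^{k+1}$-term on the right into an effective dissipation after transfer.

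The steps I would carry out are as follows. Fix any $p>\max\{N+1,N(m+1)\}$ and apply Lemma \ref{3455667lemma45630} with $k=p$. Discard the non-negative diffusion contribution $C_D(k-1)\int_0^t e^{-(k+1)(t-s)}\int_\Omega u^{m+k-3}|\nabla u|^2$ on the left and move the $u^{k+1}$-integral from right to left using the sign above, arriving at
\[
\frac{1}{k}\|u(\cdot,t)\|^{k}_{L^k(\Omega)}+\frac{\kappa_0}{k}\int_0^t e^{-(k+1)(t-s)}\int_\Omega u^{k+1}\,dx\,ds \le \rho_0\int_0^t e^{-(k+1)(t-s)}\int_\Omega u^{k}\,dx\,ds+\rho_1.
\]
A Young inequality of the form $\rho_0 u^{k}\leq\frac{\kappa_0}{2k}u^{k+1}+C_\varepsilon$, combined with the elementary observation $\int_0^t e^{-(k+1)(t-s)}\,ds\le(k+1)^{-1}$, then absorbs the remaining $u^{k}$-integral into the new dissipation on the left and leaves only a $t$-independent residual, which is exactly \dref{33444dfddrryuufgg4zjscz2.5297x96302222114}.

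Two interpretive points are worth flagging. First, under $\mu\geq\kappa_0$ the accompanying condition $m>2-\tfrac{2}{N}\lambda$ in the hypothesis is vacuous, since $(\kappa_0-\mu)_+=0$ formally makes $\lambda=+\infty$; hence the argument places no genuine restriction on $m$. Second, the technical threshold $p>\max\{N+1,N(m+1)\}$ plays no role in the absorption step itself---it is retained only so that the resulting bound feeds cleanly into the Alikakos--Moser iteration performed in the concluding section. For this reason I do not anticipate any serious obstacle in executing the proof: the whole point of the present lemma is that the regime $\mu\geq\kappa_0$ is structurally simpler than the critical regime treated in Lemma \ref{lemmasdffssdd45566645630223}, because the favorable sign that previously had to be engineered by a delicate Gagliardo--Nirenberg estimate against an $L^{\lambda}$-bound is now supplied for free by the size of the logistic damping.
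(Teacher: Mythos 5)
Your argument is correct, and for this particular lemma it takes a genuinely shorter route than the paper does. You apply Lemma \ref{3455667lemma45630} directly with $k=p$ and exploit that $\mu\ge\kappa_0$ forces $\frac{k-1}{k}\kappa_0-\mu\le-\frac{\kappa_0}{k}<0$ for every $k>1$, so the $u^{k+1}$-integral moves to the left as dissipation and absorbs $\rho_0\int_0^te^{-(k+1)(t-s)}\int_\Omega u^{k}$ by a pointwise Young inequality, with $\int_0^te^{-(k+1)(t-s)}ds\le\frac{1}{k+1}$ controlling the residual constant; no interpolation, no use of the gradient term, and no condition on $m$ are needed. The paper instead proves the lemma by the same pattern as the critical-case Lemma \ref{lemmasdffssdd45566645630223}: it takes the bound $\int_\Omega u^{l_0}\le C$ from Lemma \ref{qqqqlemma45630} (resp.\ Lemma \ref{lemma45630} when $\tau=0$) for an exponent $l_0$ below $\lambda$, uses \dref{1113333cz2.511sddff4114} with the positive coefficient $\kappa_0$ in front of $\int_\Omega u^{k+1}$, and then invokes the Gagliardo--Nirenberg inequality (this is where $m>2-\frac{2}{N}\lambda$ enters, to ensure $k+1<m+k-1+\frac{2}{N}l_0$) plus Young to absorb $\kappa_0\int_\Omega u^{k+1}$ into $C_D(k-1)\int_\Omega u^{m+k-3}|\nabla u|^2$, finishing with H\"older. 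Your route is essentially the ``Case $\mu\ge\kappa_0$'' absorption already performed inside Lemmas \ref{lemma45630} and \ref{qqqqlemma45630}, pushed to arbitrary $k=p$; it is more elementary, it makes transparent your (correct) observation that the hypothesis on $m$ is vacuous here since $(\kappa_0-\mu)_+=0$ gives $\lambda=+\infty$, and it sidesteps the choice $l_0=\lambda-\varepsilon$ in the paper's write-up, which does not literally parse in this regime. What the paper's interpolation scheme buys is uniformity with Lemmas \ref{lessdddmma45630} and \ref{lemmasdffssdd45566645630223}, where no favorable sign is available and the GN step against a subcritical $L^{l_0}$-bound is genuinely needed; in the present regime it is redundant. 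One small touch-up: your absorption constant $\frac{\kappa_0}{2k}$ tacitly assumes $\kappa_0>0$; to cover all cases absorb instead with $\frac12\bigl(\mu-\frac{k-1}{k}\kappa_0\bigr)>0$ (positive since $\mu>0$), which is the same computation.
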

\begin{proof}
 Firstly, in view of Lemma \ref{qqqqlemma45630}, there exists a positive constant $C_1$ such that
\begin{equation}
\int_{\Omega}u^{l_0}(x,t)dx\leq C_{1} ~~~\mbox{for all}~~ t\in(0,T_{max}),
\label{llll33444ddrr4zjscz2.5297dfggggx963022221sdddd14}
\end{equation}
where $l_0=\lambda -\varepsilon$ with $\varepsilon=\frac{1}{3}\frac{N}{2}(m-2+\frac{2}{N}\lambda) $.
Next, by \dref{1113333cz2.511sddff4114}, we also have
\begin{equation}
\begin{array}{rl}
&\disp{\frac{1}{{k}}\|u(\cdot,t) \|^{{{k}}}_{L^{{k}}(\Omega)}+C_ D(k-1)\int_{0}^t
e^{-( { {k}+1})(t-s)}\int_{\Omega}u^{m+k-3}|\nabla u|^2dxds}
\\
\leq&\disp{\kappa_0\int_{0}^t
e^{-( {k}+1)(t-s)}\int_\Omega u^{{{k}+1}}dxds+C_1~~~\mbox{for all}~~t\in (0, T_{max}),}\\
\end{array}
\label{1113333csdfffz2.511sddff4114}
\end{equation}
where $C_1$ is the same as \dref{1113333cz2.511sddff4114}.
On the other hand, since $m>2-\frac{2}{N}\lambda ,$ yields to $p+1<m+p-1+\frac{2}{N}l_0,$
so that, in particular, according to
by the Gagliardo--Nirenberg inequality and \dref{llll33444ddrr4zjscz2.5297dfggggx963022221sdddd14}, 
one can get there exist positive constants  $C_2$ and
$C_3$ 
such that
\begin{equation}
\begin{array}{rl}
&\disp\kappa_0\int_{\Omega} u^{k+1}\\
=&\disp{\kappa_0\|  u^{\frac{m+k-1}{2}}\|^{\frac{2(k+1)}{m+k-1}}_{L^{\frac{2(k+1)}{m+k-1}}(\Omega)}}\\
\leq&\disp{C_2(\|\nabla    u^{\frac{m+k-1}{2}}\|_{L^2(\Omega)}^{\frac{m+k-1}{k+1}}\|   u^{\frac{m+k-1}{2}}\|_{L^\frac{2l_0}{m+k-1 }(\Omega)}^{1-\frac{m+k-1}{k+1}}+\|   u^{\frac{m+k-1}{2}}\|_{L^\frac{2l_0}{m+k-1 }(\Omega)})^{\frac{2(k+1)}{m+k-1}}}\\
\leq&\disp{C_3(\|\nabla    u^{\frac{m+k-1}{2}}\|_{L^{2}(\Omega)}^{2\frac{N(k+1)-Nl_0}{(2-N)l_0+N(m+k-1)}}+1)}\\
\end{array}
\label{123cz2.57151hhdhhjjjdfffkkhhhjddffffgukildrftjj}
\end{equation}
This  together with $\frac{N(k+1)-Nl_0}{(2-N)l_0+N(m+k-1)}<1$ (by $m>2-\frac{2}{N}\lambda $) and the Young ineuqality implies that
%
for any $\delta_1>0$,
\begin{equation}
\begin{array}{rl}
&\disp\kappa_0\int_{\Omega} u^{k+1}\leq
\disp{\delta_1\|\nabla    u^{\frac{m+k-1}{2}}\|_{L^{2}(\Omega)}^{2}+C_{4},}\\
\end{array}
\label{123cz2.571sddff51hhdhhjdfffkkhjdfffgukildrftjj}
\end{equation}
which combined with \dref{1113333csdfffz2.511sddff4114} implies that
\begin{equation}
\int_{\Omega}u^{k}(x,t)dx\leq C_{5} ~~~\mbox{for all}~~ t\in(0,T_{max})
\label{33444ddrr4zjscz2.529dddddg7dfggggx96302222114}
\end{equation}
by picking  $\delta_1$ appropriately small in \dref{123cz2.571sddff51hhdhhjdfffkkhjdfffgukildrftjj}.
 Finally,
using  the H\"{o}lder inequality, we can get \dref{33444dfddrryuufgg4zjscz2.5297x96302222114}.
The proof of Lemma \ref{lemdddmasdffssdd45566645630223} is completed.
\end{proof}

\begin{lemma}\label{lemma4ddffffghhhggg5630223116} 
Let \begin{equation}\label{eqllooox45xx121ddffrttghhhiigghhjjoo12}
C_{D}>\frac{C_{GN}(1+\|u_0\|_{L^1(\Omega)})^3}{4}(2-\frac{2}{N})^2\kappa_0
 \end{equation}
 and $$h(p) :=\frac{4C_{D}}{C_{GN}(1+\|u_0\|_{L^1(\Omega)})^3}-\frac{(1-\frac{2}{N}+p)^2}{{p}}\kappa_0, $$
 where
 $\kappa_0$ is the same as \dref{113344ddff4dffgg4zjscz2ddd.5297x96302222114},
 $p\geq1,C_{D}$ and $C_{GN}$ are positive constants.
Then there exists a positive constant $\tilde{p}_0>1$ such that
\begin{equation}\label{eqx45xx121ddffrttghhhiigghhjjoo12}
h(p)>0~~~\mbox{for all}~~p\in(1,\tilde{p}_0],
\end{equation}

\end{lemma}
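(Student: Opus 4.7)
The plan is to prove this by a direct continuity argument at $p=1$, since the hypothesis \eqref{eqllooox45xx121ddffrttghhhiigghhjjoo12} on $C_{D}$ has been engineered precisely so that $h(1)>0$. First I would compute
\[
h(1)=\frac{4C_{D}}{C_{GN}(1+\|u_0\|_{L^1(\Omega)})^3}-\Bigl(2-\frac{2}{N}\Bigr)^{2}\kappa_0,
\]
and observe that the standing assumption $C_{D}>\tfrac{C_{GN}(1+\|u_0\|_{L^1(\Omega)})^3}{4}(2-\tfrac{2}{N})^{2}\kappa_0$ is equivalent to the statement $h(1)>0$.

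Next I would note that the map $p\mapsto h(p)$ is continuous on $[1,\infty)$: the first term in $h(p)$ is constant in $p$, while the second term $\frac{(1-2/N+p)^{2}}{p}\kappa_0$ is a rational function whose denominator $p$ stays bounded away from $0$ on $[1,\infty)$. Therefore $h$ is continuous at $p=1$, and since $h(1)>0$, there exists some $\tilde{p}_0>1$ such that $h(p)>\tfrac{1}{2}h(1)>0$ for all $p\in(1,\tilde{p}_0]$.

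This is essentially the entire argument; I do not anticipate any obstacle. The only point that deserves explicit comment is that the inequality \eqref{eqllooox45xx121ddffrttghhhiigghhjjoo12} is sharp in the sense that it is an equality precisely at $p=1$, which is why the conclusion can only be stated on some right neighbourhood $(1,\tilde{p}_0]$ of $1$ rather than on a larger interval determined a priori. The explicit value of $\tilde{p}_0$ is not needed in the lemma, but if desired one may take any $\tilde{p}_0>1$ in the unique right neighbourhood of $1$ on which $\frac{(1-2/N+p)^{2}}{p}\kappa_0<\frac{4C_{D}}{C_{GN}(1+\|u_0\|_{L^1(\Omega)})^3}$, the existence of which follows from the intermediate value theorem applied to the continuous function $h$.
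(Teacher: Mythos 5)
Your proof is correct and takes essentially the same route as the paper: both reduce the hypothesis on $C_{D}$ to the statement $h(1)>0$ and then conclude positivity of $h$ on a right neighbourhood of $p=1$ (the paper additionally computes $h'(p)$ to note that $h$ is decreasing, while your plain continuity argument already suffices).
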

\begin{proof}
The idea comes from \cite{Zhengssddff0}. Indeed,
due to \dref{eqllooox45xx121ddffrttghhhiigghhjjoo12}, it is not difficult to verify that $h(1)\geq\frac{4C_{D}}{C_{GN}(1+\|u_0\|_{L^1(\Omega)})^3}-(2-\frac{2}{N})^2\kappa_0>0$.
Next, by basic calculation, we derive that for any $p\geq1,$
$h'(p)=\frac{(1-\frac{2}{N}+p)(p+\frac{2}{N}-1)}{p^2}\kappa_0<0.$
Therefore,  from the monotonicity of  $h$, there exists a positive constant $\tilde{p}_0>1$ such that   \dref{eqx45xx121ddffrttghhhiigghhjjoo12} holds.
\end{proof}

\begin{lemma}\label{lemma45566645630223}
Assume that  $\mu=0$.
If
  \begin{equation}\label{gddffffnjjmmffffffjjjffffhhjx1.731426677gg}
m> 2-\frac{2}{N}
\begin{array}{ll}\\
 \end{array}
\end{equation}
or
 \begin{equation}\label{ddffffgddffffnjjmmx1.731426677gg}
m= 2-\frac{2}{N}~~\mbox{and}~~~C_{D} >\frac{C_{GN}(1+\|u_0\|_{L^1(\Omega)})^3}{4}(2-\frac{2}{N})^2\kappa_0,
\begin{array}{ll}\\
 \end{array}
\end{equation}
then there exists a positive constant $p_0>1$   
such that 
 the solution of \dref{1.1} from Lemma \ref{lemma70} satisfies
\begin{equation}
\int_{\Omega}u^{p_0}(x,t)dx\leq C ~~~\mbox{for all}~~ t\in(0,T_{max}),
\label{33sdfffff4444zjscz2.5297x96302222114}
\end{equation}
where $\kappa_0$ is the same as \dref{113344ddff4dffgg4zjscz2ddd.5297x96302222114}.
\end{lemma}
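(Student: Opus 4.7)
The plan is to specialize the entropy inequality \dref{czssddssddffggf2.3456675kk1214114114rrggkkll} of Lemma \ref{3455667lemma45630} to $\mu=0$ and combine it with the mass conservation identity of Lemma \ref{333ssdeedrfe116lemma70hhjj}, thereby reducing the task to controlling $\int_\Omega u^{k+1}$ by the dissipation term $\frac{4C_D(k-1)}{(m+k-1)^2}\|\nabla u^{(m+k-1)/2}\|_{L^2(\Omega)}^2$ via the Gagliardo--Nirenberg inequality of Lemma \ref{lemma41} with base norm $L^1(\Omega)$. Concretely, I would set $v=u^{(m+k-1)/2}$, $\theta=2/(m+k-1)$, and exponent $p=2(k+1)/(m+k-1)$ in Lemma \ref{lemma41}, obtaining an estimate of the form $\int_\Omega u^{k+1}\le C\bigl(\|\nabla u^{(m+k-1)/2}\|_{L^2(\Omega)}^{2\beta}\|u\|_{L^1(\Omega)}^{(k+1)(1-\beta)}+\|u\|_{L^1(\Omega)}^{k+1}\bigr)$ with an interpolation exponent $\beta=\frac{Nk}{N(m+k-1)+(2-N)}$; the sign of $m-(2-\tfrac{2}{N})$ decides whether $\beta<1$ or $\beta=1$, and this dichotomy drives the split into the two cases of the lemma.

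In the subcritical case $m>2-\frac{2}{N}$ one has $\beta<1$, so for any fixed $k>1$ Young's inequality lets me absorb both the $\int_\Omega u^{k+1}$ contribution and the lower-order $\rho_0\int_\Omega u^k$ contribution into an arbitrarily small fraction of the dissipation term, leaving only a constant on the right-hand side. A Gronwall argument applied to the resulting estimate then yields a uniform $L^k$ bound on $u$, so that \dref{33sdfffff4444zjscz2.5297x96302222114} holds with any $p_0>1$.

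In the critical case $m=2-\frac{2}{N}$ one has $\beta=1$ and Young's inequality is no longer available: the absorption must be \emph{exact}. Here the smallness hypothesis on $C_D$ becomes essential. Tracking the $L^1$-factor explicitly in Lemma \ref{lemma41}, together with the elementary inequality $(x+y)^r\le 2^{r-1}(x^r+y^r)$ and a crude upper bound on $\|u\|_{L^1(\Omega)}$, the Gagliardo--Nirenberg inequality produces a coefficient in front of $\|\nabla u^{(m+k-1)/2}\|_{L^2(\Omega)}^2$ comparable to $\frac{(k-1)}{k}\kappa_0\, C_{GN}(1+\|u_0\|_{L^1(\Omega)})^3$. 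Lemma \ref{lemma4ddffffghhhggg5630223116} then guarantees, via positivity of $h$, an exponent $\tilde p_0>1$ for which this coefficient is strictly smaller than $\frac{4C_D(k-1)}{(m+k-1)^2}$ on the whole range $k\in(1,\tilde p_0]$; choosing $p_0=\tilde p_0$ leaves a strictly positive multiple of the dissipation on the left-hand side, and one last application of the Gronwall lemma delivers \dref{33sdfffff4444zjscz2.5297x96302222114}.

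The genuinely delicate step is the critical case: the constant furnished by the Gagliardo--Nirenberg inequality has to be matched \emph{precisely} with the quantity $\frac{(1-2/N+p)^2}{p}\kappa_0$ encoded in the function $h$ of Lemma \ref{lemma4ddffffghhhggg5630223116}. Every ingredient---the choice of the interpolation exponent, the way $\|u_0\|_{L^1(\Omega)}$ enters, and the splitting of the $(x+y)^r$ sum---has to line up with the cubic exponent stipulated in the hypothesis on $C_D$; any looser bookkeeping would force a strictly stronger smallness assumption, spoiling the optimality of the threshold.
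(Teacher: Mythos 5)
Your proposal is correct in substance and follows essentially the same route as the paper's proof: specialize the entropy inequality of Lemma \ref{3455667lemma45630} to $\mu=0$, use the mass identity of Lemma \ref{333ssdeedrfe116lemma70hhjj}, interpolate $\int_\Omega u^{k+1}$ against $L^1(\Omega)$ via Lemma \ref{lemma41}, absorb by Young when $m>2-\frac{2}{N}$, and in the critical case $m=2-\frac{2}{N}$ match the gradient coefficient against the dissipation through the positivity of the function $h$ from Lemma \ref{lemma4ddffffghhhggg5630223116} (the paper additionally restricts $k\le\min\{2,\tilde{p}_0\}$ to fold all constants into $(1+\|u_0\|_{L^1(\Omega)})^3$). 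Only two cosmetic blemishes: the $L^1$-exponent in your displayed interpolation should be $(k+1)-(m+k-1)\beta$ rather than $(k+1)(1-\beta)$, which is immaterial since $\|u\|_{L^1(\Omega)}$ is conserved, and the hypothesis on $C_D$ is a lower bound (large diffusion), not a smallness condition.
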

\begin{proof}
Without loss of generality, we may assume that
$$m= 2-\frac{2}{N}~~\mbox{and}~~~C_{D} >\frac{C_{GN}(1+\|u_0\|_{L^1(\Omega)})^3}{4}(2-\frac{2}{N})^2\kappa_0,$$
since, $m>2-\frac{2}{N}$ can  be proved similarly and easily.
Assume that $\tilde{p}_0$ is the same as lemma \ref{lemma4ddffffghhhggg5630223116} and let $1<k\leq\min\{2,\tilde{p}_0\}$.
Then due to  \dref{czssddssddffggf2.3456675kk1214114114rrggkkll} and $\mu=0$, we also derive that for the above $k,$
\begin{equation}
\begin{array}{rl}
&\disp{\frac{1}{{k}}\|u(\cdot,t) \|^{{{k}}}_{L^{{k}}(\Omega)}+C_ D(k-1)\int_{0}^t
e^{-( { {k}+1})(t-s)}\int_{\Omega}u^{m+k-3}|\nabla u|^2dxds}
\\
\leq&\disp{\frac{({k}-1)}{{k}}\kappa_0\int_{0}^t
e^{-( {k}+1)(t-s)}\int_\Omega u^{{{k}+1}}dxds }\\
&+\disp{ \rho_0\int_{0}^t
e^{-( { {k}+1})(t-s)}\int_\Omega u^{{{k}}} dxds+\rho_1~~~\mbox{for all}~~t\in (0, T_{max}),}\\
\end{array}
\label{czssddssddffggf2.3456675kk1ssddf214114114rrggkkll}
\end{equation}
where $\kappa_0,\rho_0$ and $\rho_1$ are the same as Lemma  \ref{3455667lemma45630}.
Here, in order to estimate the rightmost term appropriately, we
employ the Gagliardo-Nirenberg inequality to obtain $C_{GN}>0$ such that
\begin{equation}
\begin{array}{rl}
&\disp\int_{\Omega} u^{k+1}\\
=&\disp{\|  u^{\frac{1-\frac{2}{N}+k}{2}}\|^{\frac{2(k+1)}{1-\frac{2}{N}+k}}_{L^{\frac{2(k+1)}{1-\frac{2}{N}+k}}(\Omega)}}\\
\leq&\disp{C_{GN}(\|\nabla    u^{\frac{1-\frac{2}{N}+k}{2}}\|_{L^2(\Omega)}^{\frac{1-\frac{2}{N}+k}{k+1}}\|   u^{\frac{1-\frac{2}{N}+k}{2}}\|_{L^\frac{2}{1-\frac{2}{N}+k }(\Omega)}^{1-\frac{1-\frac{2}{N}+k}{k+1}}+\|   u^{\frac{1-\frac{2}{N}+k}{2}}\|_{L^\frac{2}{1-\frac{2}{N}+k }(\Omega)})^{\frac{2(k+1)}{1-\frac{2}{N}+k}}}\\
\leq&\disp{C_{GN}(1+\|u_0\|_{L^1(\Omega)})^3(\|\nabla    u^{\frac{1-\frac{2}{N}+k}{2}}\|_{L^2(\Omega)}^{2}+1)}\\
\end{array}
\label{123cz2.57151hhddfffkkhhhjddffffgukildrftjj}
\end{equation}
by using \dref{333ssddaqwswddaassffssff3.ddfvbb10deerfgghhjuuloollgghhhyhh},
where in the last inequality we have  used $k\leq2$ and
 $C_{GN}$ is the same as Lemma \ref{lemma41}. 
In combination with \dref{czssddssddffggf2.3456675kk1ssddf214114114rrggkkll} and \dref{123cz2.57151hhddfffkkhhhjddffffgukildrftjj}, this shows that
\begin{equation}
\begin{array}{rl}
&\disp{\frac{1}{{k}}\|u(\cdot,t) \|^{{{k}}}_{L^{{k}}(\Omega)}}\\
&\disp{+(\frac{4C_{D}(k-1)}{(1-\frac{2}{N}+k)^2}\frac{1}{C_{GN}(1+\|u_0\|_{L^1(\Omega)})^3}-\frac{({k}-1)}{{k}}\kappa_0)\int_{0}^t
e^{-( {k}+1)(t-s)}\int_\Omega u^{{{k}+1}}dxds}
\\
\leq&\disp{\rho_0\int_{0}^t
e^{-( { {k}+1})(t-s)}\int_\Omega u^{{{k}}} dxds+C_1~~~\mbox{for all}~~t\in (0, T_{max}).}\\
\end{array}
\label{czssddssddffggf2.ssddd3456675kk1ssddf214114114rrggkkll}
\end{equation}
Therefore,
 by \dref{ddffffgddffffnjjmmx1.731426677gg} and $1<k\leq\min\{2,\tilde{p}_0\}$,
 $$\frac{4C_{D}(k-1)}{(1-\frac{2}{N}+k)^2}\frac{1}{C_{GN}(1+\|u_0\|_{L^1(\Omega)})^3}-\frac{({k}-1)}{{k}}\kappa_0>0.$$
 %
      Hence, using  Lemma  \ref{lemma4ddffffghhhggg5630223116}, we derive from the Young inequality and \dref{czssddssddffggf2.ssddd3456675kk1ssddf214114114rrggkkll} that there exist positive constants $p_0>1$ and $C_2$ such that 
  \begin{equation}
\int_{\Omega}u^{p_0}(x,t)dx\leq C_{2} ~~~\mbox{for all}~~ t\in(0,T_{max}).
\label{33444ddrr4zjsczccvv2.5297dfggggx96302222114}
\end{equation}
The proof of Lemma \ref{lemma45566645630223} is completed.
\end{proof}

Our next goal is to make sure that  Lemma \ref{lemma45566645630223}  is sufficient to enforce boundedness of $\|u(\cdot,t)\|_{L^p(\Omega)}$ for all $t\in(0, T_{max})$ and $p>1$, which
plays  a key step in the derivation of our main results.
\begin{lemma}\label{lemmaddffddfffrsedrffffffgg}
Suppose that the conditions of Lemma \ref{lemma45566645630223} hold.
Then for any $p>1,$ there exists a positive constant $C:=C(p,|\Omega|,C_{D},C_{GN},\lambda_0,m,\chi)$ such that 
\begin{equation}
 \begin{array}{rl}
 \|u(\cdot,t)\|_{L^p(\Omega)}\leq C~~~\mbox{for all}~~t\in (0, T_{max}).
\end{array}\label{cz2sfffffedfgg.5g5gghh56789hhjui7ssddd8jj90099}
\end{equation}
\end{lemma}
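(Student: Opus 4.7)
The idea is to bootstrap the single ``seed'' bound $\|u(\cdot,t)\|_{L^{p_0}(\Omega)}\le C$ furnished by Lemma~\ref{lemma45566645630223} up to an $L^p$-bound for arbitrary $p>1$. The key structural observation is that because $\mu=0$ and $m\ge 2-\tfrac{2}{N}$, a \emph{single} Gagliardo--Nirenberg interpolation anchored at the $L^{p_0}$-level succeeds for every exponent $k$, so no iterative chain of intermediate spaces is required.

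Concretely, I would fix $k>\max\{1,p_0-1\}$ (the case $k\le p_0$ is a direct H\"older consequence of the seed bound) and start from the integral inequality of Lemma~\ref{3455667lemma45630} specialised to $\mu=0$, rewriting the dissipation as $\tfrac{4C_D(k-1)}{(m+k-1)^2}\|\nabla u^{(m+k-1)/2}\|_{L^2(\Omega)}^{2}$. Applying Lemma~\ref{lemma41} to $u^{(m+k-1)/2}$ with exponents $p=\frac{2(k+1)}{m+k-1}$ and $\theta=\frac{2p_0}{m+k-1}$, the exponent on the gradient norm, after raising to the appropriate power, comes out to
\begin{equation*}
\alpha(k):=\frac{2N(k+1-p_0)}{(2-N)p_0+N(m+k-1)}.
\end{equation*}
A short algebraic manipulation shows that $\alpha(k)<2$ is equivalent to $p_0>N(2-m)/2$, and the hypothesis $m\ge 2-\tfrac{2}{N}$ gives $N(2-m)/2\le 1<p_0$, so $\alpha(k)<2$ strictly for every admissible $k$.

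With $\alpha(k)<2$ in hand, Young's inequality absorbs the Gagliardo--Nirenberg bound of $\tfrac{k-1}{k}\kappa_0\int_\Omega u^{k+1}$ into the dissipation on the left-hand side with arbitrarily small loss, while the lower-order piece $\rho_0\int_\Omega u^k$ is disposed of by the pointwise estimate $u^k\le\tfrac{k}{k+1}u^{k+1}+\tfrac{1}{k+1}$ and absorbed by the same mechanism. The remaining terms on the right collapse to a uniform constant (since $\int_0^t e^{-(k+1)(t-s)}\,ds\le\tfrac{1}{k+1}$), yielding $\|u(\cdot,t)\|_{L^k(\Omega)}\le C(k)$ on $(0,T_{\max})$. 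The main technical obstacle is keeping track of the $k$-dependence of the Gagliardo--Nirenberg constant and of the Young constant through the absorption step, and verifying that the interpolation exponent $a$ of Lemma~\ref{lemma41} lies in the admissible range $(0,1)$ for every $k$ in question; the conceptual heart of the argument, however, is the elementary inequality $N(2-m)/2\le 1<p_0$, which is precisely the critical-exponent threshold built into the statement of Lemma~\ref{lemma45566645630223}.
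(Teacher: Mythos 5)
Your plan is correct and essentially reproduces the paper's own argument: starting from the differential/integral inequality of Lemma \ref{3455667lemma45630} with $\mu=0$, one interpolates $\kappa_0\int_\Omega u^{k+1}$ via Gagliardo--Nirenberg anchored at the $L^{p_0}$ seed bound of Lemma \ref{lemma45566645630223}, and your threshold computation $\alpha(k)<2\Leftrightarrow p_0>N(2-m)/2$ is exactly the paper's condition $k+1<m+k-1+\tfrac{2}{N}p_0$, after which Young's inequality absorbs everything into the dissipation. The only differences are cosmetic (your pointwise treatment of the $\rho_0\int_\Omega u^k$ term and a slightly smaller admissible range of $k$, which still guarantees the interpolation exponent lies in $(0,1)$).
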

\begin{proof}
Firstly, let $k>\max\{N+1,N(m+1),p_0-1,1,1-m+\frac{N-2}{N}p_0\}$, where $p_0>1$  is the same as Lemma \ref{lemma45566645630223}.
In view of \dref{czssddssddffggf2.3456675kk1ssddf214114114rrggkkll}, we have
\begin{equation}
\begin{array}{rl}
&\disp{\frac{1}{{k}}\|u(\cdot,t) \|^{{{k}}}_{L^{{k}}(\Omega)}+C_ D(k-1)\int_{0}^t
e^{-( { {k}+1})(t-s)}\int_{\Omega}u^{m+k-3}|\nabla u|^2dxds}
\\
\leq&\disp{\kappa_0\int_{0}^t
e^{-( {k}+1)(t-s)}\int_\Omega u^{{{k}+1}}dxds }\\
&+\disp{ \rho_0\int_{0}^t
e^{-( { {k}+1})(t-s)}\int_\Omega u^{{{k}}} dxds+\rho_1~~~\mbox{for all}~~t\in (0, T_{max}),}\\
\end{array}
\label{11czssddssddffggf2.3456675kk1ssddf214114114rrggkkll}
\end{equation}
where $\kappa_0,\rho_0$ and $\rho_1$ are the same as Lemma  \ref{3455667lemma45630}.
Next, observe that  $m\geq2-\frac{2}{N}$ and $p_0>1$ yields to $k+1<m+k-1+\frac{2}{N}p_0,$
so that, in view of   the Gagliardo--Nirenberg inequality, \dref{33sdfffff4444zjscz2.5297x96302222114} and using the Young inequality,  
one can get there exist positive constants  $C_1,C_2$ and
$C_3$ 
such that for any $\delta_1>0$
\begin{equation}
\begin{array}{rl}
&\disp\kappa_0\int_{\Omega} u^{k+1}\\
=&\disp{\kappa_0\|  u^{\frac{m+k-1}{2}}\|^{\frac{2(k+1)}{m+k-1}}_{L^{\frac{2(k+1)}{m+k-1}}(\Omega)}}\\
\leq&\disp{C_{1}(\|\nabla    u^{\frac{m+k-1}{2}}\|_{L^2(\Omega)}^{\frac{m+k-1}{k+1}}\|   u^{\frac{m+k-1}{2}}\|_{L^\frac{2p_0}{m+k-1 }(\Omega)}^{1-\frac{m+k-1}{k+1}}+\|   u^{\frac{m+k-1}{2}}\|_{L^\frac{2p_0}{m+k-1 }(\Omega)})^{\frac{2(k+1)}{m+k-1}}}\\
\leq&\disp{C_{2}(\|\nabla    u^{\frac{m+k-1}{2}}\|_{L^{2}(\Omega)}^{2\frac{N(k+1)-Np_0}{(2-N)p_0+N(m+k-1)}}+1)}\\
\leq&\disp{\delta_1\|\nabla    u^{\frac{m+k-1}{2}}\|_{L^{2}(\Omega)}^{2}+C_3,}\\
\end{array}
\label{123cz2.57151hhdhhjjjdfffkkhhhjddffffgukildrftjj}
\end{equation}
where we have used $\frac{N(k+1)-Np_0}{(2-N)p_0+N(m+k-1)}<1$ together with  $m\geq2-\frac{2}{N}$ and $p_0>1$.
Inserting \dref{123cz2.57151hhdhhjjjdfffkkhhhjddffffgukildrftjj} into \dref{11czssddssddffggf2.3456675kk1ssddf214114114rrggkkll},  choosing $\delta_1$ appropriately small and using the H\"{o}lder inequality, we can get \dref{cz2sfffffedfgg.5g5gghh56789hhjui7ssddd8jj90099}.
\end{proof}

\section{The proof of main results}

In this section, we are going to prove our main result. To this end, we  will proceed
in two steps. Firstly, applying  the  standard regularity theory of partial differential equation, we  turn the bounds from Lemma \ref{lemmaddffddfffrsedrffffffgg} into a higher order bound for $\nabla v$.
%
%
%
%

\begin{lemma}\label{lemmaddfffrsedrffffffgg}
Suppose that the conditions of Theorem  \ref{theorem3} (or Theorem \ref{dfffftheorem3}) hold.
Let $T\in (0, T_{max})$ and $(u, v,w)$ be the solution of \dref{1.1}.  Then there exists a constant $C > 0$ independent of $T$ such that the
component $v$ of $(u, v,w)$ satisfies
\begin{equation}
 \begin{array}{rl}
 \| v(\cdot,t)\|_{W^{1,\infty}(\Omega)}\leq C~~~\mbox{for all}~~t\in (0, T).
\end{array}\label{cz2sedfgg.5g5gghh56789hhjui7ssddd8jj90099}
\end{equation}
\end{lemma}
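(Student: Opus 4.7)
The plan is to bootstrap the $L^p$-control of $u$ from Lemma \ref{lemmaddffddfffrsedrffffffgg} into a $W^{1,\infty}$-estimate on $v$ by standard linear regularity theory applied to the $v$-equation under Neumann boundary conditions. I will handle the two cases $\tau=0$ and $\tau=1$ separately, since the cited lemma already covers both the parabolic--elliptic and parabolic--parabolic situations.

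First I would fix some $p>N$ (in particular $p$ much larger than any previously chosen exponent) and invoke Lemma \ref{lemmaddffddfffrsedrffffffgg} to obtain a constant $C_1>0$, independent of $T$, such that $\|u(\cdot,t)\|_{L^p(\Omega)}\leq C_1$ for all $t\in(0,T)$. For the case $\tau=0$, the second equation in \dref{1.1} reduces to the elliptic problem $-\Delta v+v=u$ with homogeneous Neumann data, so classical $L^p$-theory yields $\|v(\cdot,t)\|_{W^{2,p}(\Omega)}\leq C_2\|u(\cdot,t)\|_{L^p(\Omega)}$, and then the Sobolev embedding $W^{2,p}(\Omega)\hookrightarrow W^{1,\infty}(\Omega)$ (valid because $p>N$) gives \dref{cz2sedfgg.5g5gghh56789hhjui7ssddd8jj90099} at once.

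For the case $\tau=1$, I would represent $v$ via the variation-of-constants formula associated with the Neumann heat semigroup $(e^{t\Delta})_{t\geq 0}$ on $\Omega$, writing
\begin{equation*}
v(\cdot,t)=e^{-t}e^{t\Delta}v_0+\int_0^t e^{-(t-s)}e^{(t-s)\Delta}u(\cdot,s)\,ds,\qquad t\in(0,T).
\end{equation*}
Taking gradients and using the well-known smoothing estimates for the Neumann semigroup, namely $\|\nabla e^{\sigma\Delta}\varphi\|_{L^\infty(\Omega)}\leq C_3(1+\sigma^{-\frac{1}{2}-\frac{N}{2p}})e^{-\lambda_1\sigma}\|\varphi\|_{L^p(\Omega)}$ for some $\lambda_1>0$ and all $\sigma>0$, together with the assumption \dref{x1.731426677gg} on $v_0$, it follows that
\begin{equation*}
\|\nabla v(\cdot,t)\|_{L^\infty(\Omega)}\leq C_4\|\nabla v_0\|_{L^\infty(\Omega)}+C_5\int_0^t\bigl(1+(t-s)^{-\frac{1}{2}-\frac{N}{2p}}\bigr)e^{-(1+\lambda_1)(t-s)}\|u(\cdot,s)\|_{L^p(\Omega)}\,ds.
\end{equation*}
Since $p>N$ was chosen so that $\tfrac{1}{2}+\tfrac{N}{2p}<1$, the kernel is integrable in $s$, and the uniform $L^p$-bound on $u$ together with the exponential decay factor yields a bound on $\|\nabla v(\cdot,t)\|_{L^\infty(\Omega)}$ independent of $T$. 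The $L^\infty$-bound on $v$ itself is obtained in the same way from $\|e^{\sigma\Delta}\varphi\|_{L^\infty(\Omega)}\leq C_6(1+\sigma^{-\frac{N}{2p}})\|\varphi\|_{L^p(\Omega)}$, which is even less singular. Combining these two estimates gives \dref{cz2sedfgg.5g5gghh56789hhjui7ssddd8jj90099}.

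The only non-routine point is the choice of $p$: one must pick $p>N$ so that the singular kernel $(t-s)^{-\frac{1}{2}-\frac{N}{2p}}$ remains integrable near $s=t$, and so that, in the parabolic--elliptic case, the Sobolev embedding into $W^{1,\infty}$ holds. Both requirements are compatible since Lemma \ref{lemmaddffddfffrsedrffffffgg} produces an $L^p$-bound on $u$ for every $p>1$. Time-uniformity of the resulting constant is guaranteed by the exponential decay $e^{-(1+\lambda_1)(t-s)}$ coming from the $+v$ term in the second equation of \dref{1.1}, which is exactly what allows the conclusion to hold for all $t\in(0,T)$ with $C$ independent of $T$.
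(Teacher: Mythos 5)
Your proposal is correct and follows essentially the same route as the paper: the paper's proof simply invokes the uniform $L^p$-bounds on $u$ from Lemma \ref{lemmaddffddfffrsedrffffffgg} together with standard elliptic (for $\tau=0$) or parabolic (for $\tau=1$) regularity theory for the $v$-equation. Your elliptic $W^{2,p}$-estimate plus Sobolev embedding and your Neumann-heat-semigroup smoothing argument are just the standard concrete implementations of that citation, with the correct choice $p>N$ ensuring integrability of the singular kernel and the embedding into $W^{1,\infty}(\Omega)$.
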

\begin{proof}
Due to
$\|u(\cdot, t)\|_{L^p(\Omega)}$ is bounded for any large $p$ (see Lemma \ref{lemmaddffddfffrsedrffffffgg}), we infer from the  standard regularity theory of parabolic equation (or  elliptic equation, $\tau=0$) (see e.g. \cite{Evans551}) that
\dref{cz2sedfgg.5g5gghh56789hhjui7ssddd8jj90099} holds.
\end{proof}
The previous lemmas at hand, we can now pass to the proof of our main result.
Its proof is
based on a Moser-type iteration (see e.g.  \cite{Tao794} and \cite{Keengwwwwssddghjjkk1}).
\begin{lemma}\label{lemmasedrffffffgg}
Under the assumptions of Theorem \ref{theorem3} (or Theorem \ref{dfffftheorem3}), 
one can find  a positive constant
such that for every $T\in(0, T_{max})$
\begin{equation}
 \begin{array}{rl}
 \|u(\cdot,t)\|_{L^\infty(\Omega)}\leq C~~~\mbox{for all}~~t\in (0, T).
\end{array}\label{ssddaqwddfffhhhhkkswddaassffssff3.ddfvbb10deerfgghhjuuloollgghhhyhh}
\end{equation}
\end{lemma}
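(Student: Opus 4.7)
The plan is to establish the $L^{\infty}$-bound by a standard Alikakos--Moser iteration, using as inputs the $L^{p}$-bound of $u$ for every $p>1$ furnished by Lemma \ref{lemmaddffddfffrsedrffffffgg} and the $W^{1,\infty}$-bound of $v$ from Lemma \ref{lemmaddfffrsedrffffffgg}, together with the pointwise bound $w\le\|w_{0}\|_{L^{\infty}(\Omega)}$ and the Laplacian control for $w$ coming from Lemma \ref{lemm3a}.

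First I would test the $u$-equation of \dref{1.1} by $u^{p-1}$ for $p\ge2$. Integration by parts and \dref{9162} yield, for every such $p$,
$$
\frac{1}{p}\frac{d}{dt}\int_{\Omega}u^{p}+\frac{4C_{D}(p-1)}{(m+p-1)^{2}}\int_{\Omega}\bigl|\nabla u^{(m+p-1)/2}\bigr|^{2}\le\chi(p-1)\int_{\Omega}u^{p-1}\nabla u\cdot\nabla v-\xi\tfrac{p-1}{p}\int_{\Omega}u^{p}\Delta w+\mu\int_{\Omega}u^{p}.
$$
The chemotaxis integral is controlled by $\|\nabla v\|_{L^{\infty}}\le C$ and Young's inequality, splitting half of the diffusion dissipation against a term of the form $C(p)\int_{\Omega}u^{p+1-m}$. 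The haptotaxis integral is absorbed using Lemma \ref{lemm3a} and the boundedness of $v$, giving a contribution of order $C(p)\int_{\Omega}u^{p}$. After adding $\int_{\Omega}u^{p}$ on both sides one arrives at an inequality of the schematic form
$$
\frac{d}{dt}\int_{\Omega}u^{p}+\int_{\Omega}u^{p}+\frac{2C_{D}(p-1)}{(m+p-1)^{2}}\bigl\|\nabla u^{(m+p-1)/2}\bigr\|_{L^{2}(\Omega)}^{2}\le C(p)\Bigl(\int_{\Omega}u^{p+1-m}+\int_{\Omega}u^{p}+1\Bigr).
$$

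Next I would apply the Gagliardo--Nirenberg inequality (Lemma \ref{lemma41}) to $u^{(m+p-1)/2}$, interpolating between $L^{2}$ of the gradient and $L^{2q/(m+p-1)}$ for a sufficiently large fixed $q$ (admissible by Lemma \ref{lemmaddffddfffrsedrffffffgg}), to absorb both $\int_{\Omega}u^{p+1-m}$ and $\int_{\Omega}u^{p}$ into the dissipation plus a constant $C(p,q)$. An ODE-comparison argument then yields $\|u(\cdot,t)\|_{L^{p}(\Omega)}\le C(p)$ for every fixed $p>1$. To upgrade this to $L^{\infty}$, I would set $p_{k}=2^{k}p_{0}$ with $p_{0}$ large and apply the above estimate along this sequence, using Gagliardo--Nirenberg at each stage with the previous index $p_{k}$ as the lower norm. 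Tracking the explicit dependence of the constants on $p_{k}$, one obtains a recursive inequality
$$
M_{k+1}\le\bigl(C\,b^{k}\bigr)^{1/p_{k+1}}\max\bigl\{1,M_{k}\bigr\}^{\theta_{k}},\qquad M_{k}:=\sup_{t\in(0,T)}\|u(\cdot,t)\|_{L^{p_{k}}(\Omega)},
$$
with $\theta_{k}\to1$ and $\sum k/p_{k+1}<\infty$, which is the classical Alikakos--Moser setup and delivers $\limsup_{k\to\infty}M_{k}<\infty$, i.e. \dref{ssddaqwddfffhhhhkkswddaassffssff3.ddfvbb10deerfgghhjuuloollgghhhyhh}.

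The main obstacle will be verifying that the prefactor $C(p)$ obtained after absorbing the chemotaxis/haptotaxis contributions grows only polynomially in $p$. Because $m$ may be strictly smaller than $1$ in our regime (in fact only $m\ge 2-\tfrac{2}{N}\lambda$ is available), the Gagliardo--Nirenberg exponent $a=a(p)$ applied to $u^{(m+p-1)/2}$ tends to $1$ as $p\to\infty$, and the extra factor $C(p)\chi^{2}/C_{D}$ from the chemotaxis Young step must be balanced against the genuine dissipation coefficient $C_{D}(p-1)/(m+p-1)^{2}$. Making this balance quantitative, and showing that the resulting sequence of constants satisfies the summability conditions required for the Moser cascade to converge, is the delicate point; it is however purely computational once the scaling of exponents is arranged as in the analogous estimates of \cite{Tao794} and \cite{Keengwwwwssddghjjkk1}.
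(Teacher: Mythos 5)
Your proposal follows essentially the same route as the paper: testing the first equation with powers of $u$, absorbing the chemotaxis and haptotaxis terms via the $W^{1,\infty}$-bound on $v$ (Lemma \ref{lemmaddfffrsedrffffffgg}) and the Laplacian control of Lemma \ref{lemm3a}, interpolating with the Gagliardo--Nirenberg inequality against the previous iteration index, and concluding by the Alikakos--Moser cascade, exactly as in the paper's proof. The only notable difference is that the paper tests with $(u+1)^{p-1}$ rather than $u^{p-1}$ (so that the dissipation is exactly $\|\nabla (u+1)^{\frac{m+p-1}{2}}\|_{L^2(\Omega)}^2$ even when $m<1$) and treats the cases $m\geq 1$ and $m<1$ separately with the starting exponent $\tilde{p}_0$ chosen large via Lemma \ref{lemmaddffddfffrsedrffffffgg}, which is precisely how it resolves the balance-of-constants issue you flag at the end.
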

\begin{proof}
Throughout the proof of Lemma \ref{lemmasedrffffffgg}, we use $C_i$ $(i\in \mathbb{N})$ to denote
the different positive constants independent of $p$ and $k$ ($k
\in \mathbb{N}).$

Case  $m\geq1:$ For any $p>1,$
multiplying both sides of the first equation in \dref{1.1} by $(u+1)^{p-1}$, integrating over $\Omega$, integrating by parts and using the Young inequality and \dref{cz2sedfgg.5g5gghh56789hhjui7ssddd8jj90099} and \dref{x1.731426677gghh}, we derive that
\begin{equation}
\begin{array}{rl}
&\disp{\frac{1}{{p}}\frac{d}{dt}\|u+1\|^{{p}}_{L^{{p}}(\Omega)}+C_{D}({{p}-1})\int_{\Omega}(u+1)^{m+{{p}-3}}|\nabla u|^2}
\\
\leq&\disp{-\chi\int_\Omega \nabla\cdot( u\nabla v)
  (u+1)^{{p}-1}-\xi\int_\Omega \nabla\cdot( u\nabla w)
  (u+1)^{{p}-1} +
\int_\Omega   (u+1)^{{p}-1}(\mu u-\mu u^2) }\\
\leq&\disp{\chi({p}-1)\int_\Omega  u(u+1)^{{p}-2}|\nabla u||\nabla v|
   -\xi({p-1})\int_\Omega \int_{0}^u \tau(\tau+1)^{p-2}d\tau\Delta w}\\
   &\disp{+\int_\Omega   (u+1)^{{p}-1}(\mu u-\mu u^2) }\\
\leq&\disp{\chi({p}-1)\int_\Omega  u(u+1)^{{p}-2}|\nabla u||\nabla v|}\\
&\disp{+  \frac{\xi({p-1})}{p}\int_\Omega (u+1)^{p-1}(\tau \|w_0\|_{L^\infty(\Omega)}\cdot v(x,t)+\kappa)
   +
\int_\Omega   (u+1)^{{p}-1}(\mu u-\mu u^2) }\\
\leq&\disp{\chi({p}-1)C_1\int_\Omega  (u+1)^{{p}-1}|\nabla u|+C_2\int_\Omega  (u+1)^{{p}-1}+
\int_\Omega   (u+1)^{{p}-1}(\mu u-\mu u^2)}\\
\leq&\disp{\frac{({{p}-1})}{4}\int_{\Omega}(u+1)^{m+{{p}-3}}|\nabla u|^2+\chi^2({p}-1)C_1^2\int_\Omega (u+1)^{{p}+1-m}+(C_2+\mu)\int_\Omega  (u+1)^{{p}}}\\
\leq&\disp{\frac{({{p}-1})}{4}\int_{\Omega}(u+1)^{m+{{p}-3}}|\nabla u|^2+\chi^2({p}-1)C_1^2\int_\Omega (u+1)^{{p}}+
(C_2+\mu)\int_\Omega  (u+1)^{{p}}}\\
\leq&\disp{\frac{({{p}-1})}{4}\int_{\Omega}(u+1)^{m+{{p}-3}}|\nabla u|^2+C_3p\int_\Omega (u+1)^{{p}}-
\int_\Omega   (u+1)^{{p}}~~ \mbox{for all}~~~  t\in(0,T )}\\
\end{array}
\label{cz2aasweeettyyiii.5114114}
\end{equation}
with $C_1=\sup_{t\in(0,T)}\| v(\cdot,t)\|_{W^{1,\infty}(\Omega)},C_2=\xi\|w_0\|_{L^\infty(\Omega)}C_1+\kappa,C_3=\chi^2C_1^2+C_2+\mu+1$, where in the last inequality we have used the fact that
$\int_\Omega   \mu u(u+1)^{{p}-1}\leq \int_\Omega   \mu (u+1)^{{p}}$ and $u\geq0$.
Due to \dref{cz2aasweeettyyiii.5114114}, we deduce that
\begin{equation}
\begin{array}{rl}
&\disp{\frac{d}{dt}\|u+1\|^{{p}}_{L^{{p}}(\Omega)}+\int_\Omega   (u+1)^{{p}}+C_3\int_{\Omega}|\nabla (u+1)^{\frac{m+p-1}{2}}|^2\leq C_2p^2\int_\Omega (u+1)^{{p}}~~ \mbox{for all}~~~  t\in(0,T ).}\\
\end{array}
\label{cz2aasweeettyyiii.51sderftgg14114}
\end{equation}
Now,  we let ${l_0}>\max\{1,m-1\},p:=p_k = 2^k({l_0} + 1-m) + m-1$
and
\begin{equation}M_k =\max\{1,\sup_{t\in(0,T)}\int_{\Omega}(u+1)^{p_k}\}~~~\mbox{for}~~k\in \mathbb{N}.
\label{cz2aasweeettyyddrffgghiii.51sderftgg14114}
\end{equation}
We now invoke the Gagliardo--Nirenberg inequality ensures that 
\begin{equation}
\begin{array}{rl}
&C_2p_k^2\disp\int_\Omega (u+1)^{ p_k }\\
=&\disp{C_2p_k^2\|(u+1)^{\frac{m+p_k-1}{2}}\|_{L^{\frac{2 p_k }{m+ p_k -1}}(\Omega)}^{\frac{2 p_k }{m+ p_k -1}}}\\
\leq&\disp{C_4 p_k ^2(\|\nabla (u+1)^{\frac{m+ p_k -1}{2}}\|_{L^{2}(\Omega)}^{\frac{2 p_k }{m+ p_k -1}\varsigma_1}
\| (u+1)^{\frac{m+ p_k -1}{2}}\|_{L^{1}(\Omega)}^{\frac{2 p_k }{m+ p_k -1}(1-\varsigma_1)}+\| (u+1)^{\frac{m+ p_k -1}{2}}\|_{L^{1}(\Omega)}^{\frac{2 p_k }{m+ p_k -1}}),}\\
\end{array}
\label{cz2aasweeettyyiii.51sderftgg14114}
\end{equation}
where
$$\frac{2 p_k }{m+ p_k -1}\varsigma_1=\frac{2 p_k }{m+ p_k -1}\frac{N-\frac{N(m+ p_k -1)}{2 p_k }}{1-\frac{N}{2}+N}=\frac{2N( p_k +1-m)}{(N+2)(m+ p_k -1)}<2
$$
and
$$\frac{2 p_k }{m+ p_k -1}(1-\varsigma_1)=\frac{2 p_k }{m+ p_k -1}(1-\frac{N-\frac{N(m+ p_k -1)}{2 p_k }}{1-\frac{N}{2}+N})=2\frac{2 p_k +N(m-1)}{(N+2)(m+ p_k -1)}.
$$
Therefore, an application of the Young inequality yields
\begin{equation}
\begin{array}{rl}
C_2 p_k ^2\disp\int_\Omega (u+1)^{ p_k }\leq&\disp{C_5\|\nabla (u+1)^{\frac{m+ p_k -1}{2}}\|_{L^{2}(\Omega)}^{2}+C_6 p_k ^{\frac{(N+2)(m+ p_k -1)}{ p_k +(N+1)(m-1)}}
\| (u+1)^{\frac{m+ p_k -1}{2}}\|_{L^{1}(\Omega)}^{\frac{2 p_k +N(m-1)}{N(m-1)+m+ p_k -1}}}\\
&\disp{+C_7 p_k ^2\| (u+1)^{\frac{m+ p_k -1}{2}}\|_{L^{1}(\Omega)}^{\frac{2 p_k }{m+ p_k -1}}}\\
\leq&\disp{C_3\|\nabla (u+1)^{\frac{m+ p_k -1}{2}}\|_{L^{2}(\Omega)}^{2}+C_8 p_k ^{\frac{(N+2)(m+ p_k -1)}{ p_k +(N+1)(m-1)}}
\| (u+1)^{\frac{m+ p_k -1}{2}}\|_{L^{1}(\Omega)}^{\frac{2 p_k }{m+ p_k -1}}.}\\
\end{array}
\label{cz2aasweeettyyiiissdff.51sderftsdfffgg14114}
\end{equation}
Here we have used the fact that ${\frac{2 p_k +N(m-1)}{N(m-1)+m+ p_k -1}}\leq\frac{2 p_k }{m+ p_k -1}$ and ${\frac{(N+2)(m+ p_k -1)}{ p_k +(N+1)(m-1)}}\geq2$ (by $p_k>m-1$).
Thus, in light of $m\geq1,$  by means of \dref{cz2aasweeettyyddrffgghiii.51sderftgg14114}--\dref{cz2aasweeettyyiiissdff.51sderftsdfffgg14114}, 
\begin{equation}
\begin{array}{rl}
\disp\frac{d}{dt}\|u+1\|^{p_k}_{L^{p_k}(\Omega)}+\int_\Omega  (u+1)^{{p_k}}{}\leq&\disp{C_9 p_k ^{\frac{(N+2)(m+ p_k -1)}{ p_k +(N+1)(m-1)}}
\|   (u+1) ^{\frac{m+{p_k}-1}{2}}\|_{L^1(\Omega)}^{\frac{2 p_k }{m+ p_k -1}}}\\
\leq&\disp{\rho^k
M_{k-1}^{\frac{2 p_k }{m+ p_k -1}}}\\
\leq&\disp{\rho^k
M_{k-1}^{2}~~\mbox{for all}~~ t\in(0, T)}\\
\end{array}
\label{zjscz2.5297x9630111rrd67ddfff512df515}
\end{equation}
with some $\rho> 1.$
Here we have used the fact that
$${\frac{(N+2)(m+ p_k -1)}{ p_k +(N+1)(m-1)}}=\frac{2^k({l_0}+1-m)(N+2)+2(N+2)(m-1)}{2^k({l_0}+1-m)+(N+2)(m-1)}\leq N+2$$
and
$${\frac{2 p_k }{m+ p_k -1}}\leq{\frac{2 (p_k+m-1) }{m+ p_k -1}}=2.$$
Integrating \dref{zjscz2.5297x9630111rrd67ddfff512df515} over $(0, t)$ with $t\in(0, T)$, we derive
\begin{equation}
\begin{array}{rl}
\disp{\int_{\Omega}(u+1)^{p_k}(x,t)\leq\max\{\int_\Omega  (u_0+1) ^{{p_k}}{},\rho^k
M_{k-1}^{2}\}~~\mbox{for all}~~ t\in(0, T).}\\
\end{array}
\label{zjscz2.5297x9630111rrd67ddfff512ddfggghhhdf515}
\end{equation}
If $\int_{\Omega}(u+1)^{p_k}(x,t) \leq \int_\Omega  (u_0+1) ^{{p_k}}{}$ for any large $k\in \mathbb{N},$
then we obtain \dref{ssddaqwddfffhhhhkkswddaassffssff3.ddfvbb10deerfgghhjuuloollgghhhyhh} directly.
Otherwise,
by a straightforward induction, we have
\begin{equation}
\begin{array}{rl}
\disp\int_{\Omega} (u+1)^{p_k} {}\leq&\disp{
\rho^k
(\rho^{k-1}M_{k-2}^{2})^{2}}\\
=&\disp{\rho^{k+2(k-1)}M_{k-2}^{2^2}}\\
\leq&\disp{\rho^{k+\Sigma_{j=2}^k(j-1)}M_{0}^{2^k}.}\\
\end{array}
\label{cz2.56303hhyy890678789ty4tt8890013378}
\end{equation}
Combined with the boundedness of $M_0$ and in light of 
$\ln(1 + z)\leq z$ for
all $z\geq 0$,  so that,
taking $p_k$-th
roots on both sides of \dref{cz2.56303hhyy890678789ty4tt8890013378},  we can easily get \dref{ssddaqwddfffhhhhkkswddaassffssff3.ddfvbb10deerfgghhjuuloollgghhhyhh}.

Case $m<1$: Due to Lemmas \ref{lemmasdffssdd45566645630223}, \dref{lemdddmasdffssdd45566645630223} and \ref{lemmaddffddfffrsedrffffffgg}, we may choose
\begin{equation}
\tilde{p}_0:>\max\{6N(1-m),5(1-m),(3N+3)(1-m),5N\}
\label{ssdd999zjscz2.52ssdffssderrfgfgff97x96302222114}
\end{equation}
 such that
%
%
%
\begin{equation}
\int_{\Omega}(u+1)^{\tilde{p}_0}(x,t) \leq C_{10} ~~~\mbox{for all}~~ t\in(0,T).
\label{999zjscz2.52ssdffssderrfgfgff97x96302222114}
\end{equation}
Next,
testing the first equation in \dref{1.1} by $(u+1)^{p-1}$, integrating over $\Omega$, integrating by parts and applying the Young inequality and \dref{cz2sedfgg.5g5gghh56789hhjui7ssddd8jj90099}, we derive that
\begin{equation}
\begin{array}{rl}
&\disp{\frac{1}{{p}}\frac{d}{dt}\|u+1\|^{{p}}_{L^{{p}}(\Omega)}+C_{D}({{p}-1})\int_{\Omega}(u+1)^{m+{{p}-3}}|\nabla u|^2}
\\
\leq&\disp{-\chi\int_\Omega \nabla\cdot( u\nabla v)
  (u+1)^{{p}-1} -\xi\int_\Omega \nabla\cdot( u\nabla w)
  (u+1)^{{p}-1} +
\int_\Omega   (u+1)^{{p}-1}(\mu u-\mu u^2) }\\
\leq&\disp{\chi({p}-1)\int_\Omega  u(u+1)^{{p}-2}|\nabla u||\nabla v|
   -\xi({p-1})\int_\Omega \int_{0}^u \tau(\tau+1)^{p-2}d\tau\Delta w
   +
\int_\Omega   (u+1)^{{p}-1}(\mu u-\mu u^2) }\\
\leq&\disp{\chi({p}-1)\int_\Omega  u(u+1)^{{p}-2}|\nabla u||\nabla v|}\\
&\disp{+  \frac{\xi({p-1})}{p}\int_\Omega (u+1)^{p-1}(\tau \|w_0\|_{L^\infty(\Omega)}\cdot v(x,t)+\kappa)
   +
\int_\Omega   (u+1)^{{p}-1}(\mu u-\mu u^2) }\\
\leq&\disp{\frac{({{p}-1})}{4}\int_{\Omega}(u+1)^{m+{{p}-3}}|\nabla u|^2+\chi^2({p}-1)C_1^2\int_\Omega (u+1)^{{p}+1-m}+(C_2+\mu)\int_\Omega  (u+1)^{{p}}}\\
\leq&\disp{\frac{({{p}-1})}{4}\int_{\Omega}(u+1)^{m+{{p}-3}}|\nabla u|^2+\chi^2({p}-1)C_1^2\int_\Omega (u+1)^{{p}+1-m}+
(C_2+\mu)\int_\Omega  (u+1)^{{p}+1-m}}\\
\leq&\disp{\frac{({{p}-1})}{4}\int_{\Omega}(u+1)^{m+{{p}-3}}|\nabla u|^2+C_{10}p\int_\Omega (u+1)^{{p}+1-m}-
\int_\Omega   (u+1)^{{p}}~~ \mbox{for all}~~~  t\in(0,T ),}\\
\end{array}
\label{999cz2aasweeettyyiii.5114ddffgg114}
\end{equation}
with $C_{1}=\sup_{t\in(0,T)}\| v(\cdot,t)\|_{W^{1,\infty}(\Omega)},C_{2}=\xi\|w_0\|_{L^\infty(\Omega)}C_{1}+\kappa,C_{11}=\chi^2C_1^2+C_{2}+\mu+1$.
Here we have used the fact that $\int_\Omega    u(u+1)^{{p}-1}\leq \int_\Omega    (u+1)^{{p}}\leq\int_\Omega (u+1)^{{p}+1-m}$ and $u\geq0$.
Therefore, \dref{999cz2aasweeettyyiii.5114ddffgg114} yields to
\begin{equation}
\begin{array}{rl}
&\disp{\frac{d}{dt}\|u+1\|^{{p}}_{L^{{p}}(\Omega)}+\int_\Omega   (u+1)^{{p}}+
C_{12}\int_{\Omega}|\nabla (u+1)^{\frac{m+p-1}{2}}|^2\leq C_{13}p^2\int_\Omega (u+1)^{{p}+1-m}}\\
\end{array}
\label{3drftgyyyyyuuucz2aasweeettyyiii.51sderftgg14114}
\end{equation}
for all $t\in(0,T).$
Let  $p:=\tilde{p}_k = 2^k(\tilde{p}_0 + 1-m) + m-1$
and
\begin{equation}\tilde{M}_k =\max\{1,\sup_{t\in(0,T)}\int_{\Omega}(u+1)^{\tilde{p}_k}\}~~~\mbox{for}~~k\in \mathbb{N},
\label{999cz2aasweeettyyddrffgghiii.51sderftgg14114}
\end{equation}
where $\tilde{p}_0$ is given by \dref{ssdd999zjscz2.52ssdffssderrfgfgff97x96302222114}.
As moreover by the Gagliardo¨CNirenberg inequality, we have
\begin{equation}
\begin{array}{rl}
&C_{13}\tilde{p}_k^2\disp\int_\Omega (u+1)^{ \tilde{p}_k +1-m}\\
=&\disp{C_{13}\tilde{p}_k^2\|(u+1)^{\frac{m+\tilde{p}_k-1}{2}}\|_{L^{\frac{2(\tilde{p}_k+1-m) }{m+ \tilde{p}_k -1}}(\Omega)}^{\frac{2(\tilde{p}_k+1-m) }{m+ \tilde{p}_k -1}}}\\
\leq&\disp{C_{14} \tilde{p}_k ^2(\|\nabla (u+1)^{\frac{m+ \tilde{p}_k -1}{2}}\|_{L^{2}(\Omega)}^{\frac{2(\tilde{p}_k+1-m) }{m+ \tilde{p}_k -1}\varsigma_2}
\| (u+1)^{\frac{m+ \tilde{p}_k -1}{2}}\|_{L^{1}(\Omega)}^{\frac{2(\tilde{p}_k+1-m) }{m+ \tilde{p}_k -1}(1-\varsigma_2)}+\| (u+1)^{\frac{m+ \tilde{p}_k -1}{2}}\|_{L^{1}(\Omega)}^{\frac{2(\tilde{p}_k+1-m) }{m+ \tilde{p}_k -1}}),}\\
\end{array}
\label{999cz2aasweeettyyiii.51sderftgg14114}
\end{equation}
where
$$\begin{array}{rl}
\disp\frac{2(\tilde{p}_k+1-m) }{m+ \tilde{p}_k -1}\varsigma_2=&\disp\frac{2(\tilde{p}_k+1-m) }{m+ \tilde{p}_k -1}\frac{N-\frac{N(m+ \tilde{p}_k -1)}{2(\tilde{p}_k+1-m) }}{1-\frac{N}{2}+N}\\
=&\disp\frac{2N\tilde{p}_k +6N(1-m)}{(m+ \tilde{p}_k -1)(N+2)}<2~\mbox{by}~\tilde{p}_k>(2N+1)(1-m)\\
\end{array}
$$
and
$$\frac{2(\tilde{p}_k+1-m) }{m+ \tilde{p}_k -1}(1-\varsigma_2)=\frac{2(\tilde{p}_k+1-m) }{m+ \tilde{p}_k -1}(1-\frac{N-\frac{N(m+ \tilde{p}_k -1)}{2(\tilde{p}_k+1-m) }}{1-\frac{N}{2}+N})=4\frac{\tilde{p}_k+(m-1)(N-1)}{(m+\tilde{p}_k-1)(N+2)}.
$$
Therefore, in light  of the Young inequality, we conclude that
\begin{equation}
\begin{array}{rl}
&C_{13} \tilde{p}_k ^2\disp\int_\Omega (u+1)^{ \tilde{p}_k+1-m }\\
\leq&\disp{C_{12}\|\nabla (u+1)^{\frac{m+ \tilde{p}_k -1}{2}}\|_{L^{2}(\Omega)}^{2}+C_{15} \tilde{p}_k ^{\frac{(m+ \tilde{p}_k -1)(N+2)}{ \tilde{p}_k +(m-1)(2N+1)}}
\| (u+1)^{\frac{m+ \tilde{p}_k -1}{2}}\|_{L^{1}(\Omega)}^{\frac{2[\tilde{p}_k +(N-1)(m-1)]}{\tilde{p}_k +(2N+1)(m-1)}}}\\
&\disp{+C_{16} \tilde{p}_k ^2\| (u+1)^{\frac{m+ \tilde{p}_k -1}{2}}\|_{L^{1}(\Omega)}^{\frac{2 (\tilde{p}_k +1-m)}{m+ \tilde{p}_k -1}}}\\
\leq&\disp{C_{12}\|\nabla (u+1)^{\frac{m+ \tilde{p}_k -1}{2}}\|_{L^{2}(\Omega)}^{2}+C_{17} \tilde{p}_k ^{\frac{(m+ \tilde{p}_k -1)(N+2)}{ \tilde{p}_k +(m-1)(2N+1)}}
\| (u+1)^{\frac{m+ \tilde{p}_k -1}{2}}\|_{L^{1}(\Omega)}^{\frac{2 (\tilde{p}_k+1-m) }{m+ \tilde{p}_k -1}},}\\
\end{array}
\label{999cz2aasweeettyyiiissdff.51sderftsdfffgg14114}
\end{equation}
where we have utilized the following facts
 $$\frac{2[\tilde{p}_k +(N-1)(m-1)]}{\tilde{p}_k +(2N+1)(m-1)}\geq\frac{2 (\tilde{p}_k+1-m) }{m+ \tilde{p}_k -1}~~~ \mbox{and}~~
 \frac{(m+ \tilde{p}_k -1)(N+2)}{ \tilde{p}_k +(m-1)(2N+1)}\geq2.$$
 The fact $\tilde{p}_0>(1-m)(4N+1)$ then ensures
 $$\begin{array}{rl}
{\disp\frac{(m+ \tilde{p}_k -1)(N+2)}{ \tilde{p}_k +(m-1)(2N+1)}}=&\disp{(N+2)\frac{2^k({\tilde{p}_0}+1-m)+2(m-1)}{2^k({\tilde{p}_0}+1-m)+(2N+2)(m-1)}}\\
\leq&\disp{
(N+2)\frac{{\tilde{p}_0}+1-m+2(m-1)}{{\tilde{p}_0}+1-m+(2N+2)(m-1)}}\\
\leq&\disp{2(N+2),}\\
\end{array}
$$
so that, in light of
 \dref{ssdd999zjscz2.52ssdffssderrfgfgff97x96302222114},
\dref{999cz2aasweeettyyddrffgghiii.51sderftgg14114}--\dref{999cz2aasweeettyyiiissdff.51sderftsdfffgg14114}, 
\begin{equation}
\begin{array}{rl}
\disp&\disp\frac{d}{dt}\|u+1\|^{\tilde{p}_k}_{L^{\tilde{p}_k}(\Omega)}+\int_\Omega  (u+1) ^{{\tilde{p}_k}}{}\\
\leq&\disp{C_{18} \tilde{p}_k ^{\frac{(m+ \tilde{p}_k -1)(N+2)}{ \tilde{p}_k +(m-1)(2N+1)}}
\|   (u+1) ^{\frac{m+{\tilde{p}_k}-1}{2}}\|_{L^1(\Omega)}^{\frac{2[\tilde{p}_k +(N-1)(m-1)]}{\tilde{p}_k +(2N+1)(m-1)}}}\\
\leq&\disp{\tilde{\rho}^k
\tilde{M}_{k-1}^{\frac{2[\tilde{p}_k +(N-1)(m-1)]}{\tilde{p}_k +(2N+1)(m-1)}}~~\mbox{for all}~~ t\in(0, T)}\\
\end{array}
\label{999zjscz2.5297x9630111rrd67ddfff512df515}
\end{equation}
with some $\tilde{\rho}> 1,$ where $$\begin{array}{rl}
&{\disp\frac{2[\tilde{p}_k +(N-1)(m-1)]}{\tilde{p}_k +(2N+1)(m-1)}}\\
=&2{\frac{2^k (\tilde{p}_0+1-m)+N(m-1)}{2^k (\tilde{p}_0+1-m)+(2N+2)(m -1)}}\\
=&2(1+{\frac{(N+2)(1-m) }{2^k (\tilde{p}_0+1-m)+(2N+2)(m -1)}}):=\kappa_k.\\
\end{array}$$
%
Here we note that $\kappa_k=2(1+\varepsilon_k)$ for $k\geq1$,  where $\varepsilon_k$ satisfies $\varepsilon_k\leq \frac{C_{19}}{2^{k}}$ for all $k$ with some $C_{17}>0$.
Next, we
integrate \dref{999zjscz2.5297x9630111rrd67ddfff512df515} over $(0, t)$ with $t\in(0, T)$, then yields to
\begin{equation}
\begin{array}{rl}
\disp{\int_{\Omega}(u+1)^{\tilde{p}_k}(x,t)\leq\max\{\int_\Omega  (u_0+1) ^{{\tilde{p}_k}}{},\tilde{\rho}^k
\tilde{M}_{k-1}^{\frac{2[\tilde{p}_k +(N-1)(m-1)]}{\tilde{p}_k +(2N+1)(m-1)}}\}~~\mbox{for all}~~ t\in(0, T).}\\
\end{array}
\label{999zjscz2.5297x9630111rrd67ddfff512ddfggghhhdf515}
\end{equation}
If $\int_{\Omega}(u+1)^{\tilde{p}_k}(x,t) \leq \int_\Omega  (u_0+1) ^{{\tilde{p}_k}}{}$ for any large $k\in \mathbb{N},$
then we derive  \dref{ssddaqwddfffhhhhkkswddaassffssff3.ddfvbb10deerfgghhjuuloollgghhhyhh} holds.
Otherwise,
by a straightforward induction, we have
\begin{equation}
\begin{array}{rl}
\disp\int_{\Omega} (u+1)^{\tilde{p}_k} {}\leq&\disp{
\tilde{\rho}^{k+\sum_{j=2}^k(j-1)\cdot\prod_{i=j}^k\kappa_i}
\tilde{M}_0^{\prod_{i=1}^k\kappa_i}~~\mbox{for all}~~k\geq1.}\\
\end{array}
\label{ssdd4444cz2.56303hhyy890678789ty4tt8890013378}
\end{equation}
On the other hand, due to the fact that $\ln(1+x)\leq x$ (for all $x\geq0$), a simple computation yields
$$\begin{array}{rl}
\disp\prod_{i=j}^k\kappa_i
=&2^{k+1-j}e^{\Sigma_{i=j}^k\ln(1+\varepsilon_j)}\\
\leq&2^{k+1-j}e^{\Sigma_{i=j}^k \varepsilon_j}\\
\leq&2^{k+1-j}e^{C_{20}}~~~\mbox{for all}~~k\geq1~~\mbox{and}~~j=\{1,\ldots,k\}.\\
\end{array}$$
 This together with \dref{ssdd4444cz2.56303hhyy890678789ty4tt8890013378} entails that
\begin{equation}
\begin{array}{rl}
\disp\left(\int_{\Omega} (u+1)^{\tilde{p}_k}\right)^{\frac{1}{\tilde{p}_k}} {}\leq&\disp{
\tilde{\rho}^{\frac{k}{\tilde{p}_k}+\frac{\sum_{j=2}^k(j-1)\cdot\prod_{i=j}^k\kappa_i}{\tilde{p}_k}}
\tilde{M}_0^{\frac{\prod_{i=1}^k\kappa_i}{\tilde{p}_k}}~~\mbox{for all}~~k\geq1,}\\
\end{array}
\label{ssdd4444ddfffcz2.56303hhyy890678789ty4tt8890013378}
\end{equation}
which after taking $k\rightarrow\infty$ readily implies that \dref{ssddaqwddfffhhhhkkswddaassffssff3.ddfvbb10deerfgghhjuuloollgghhhyhh}  is valid.
\end{proof}

The previous lemmas at hand, we can conclude main results in a straightforward manner.
{\bf The proof of main results}~
Theorem \ref{dfffftheorem3} (and Theorem  \ref{theorem3}) will be proved if we can show $T_{max}=\infty$. Suppose on contrary that $T_{max}<\infty$.
%
In view of \dref{ssddaqwddfffhhhhkkswddaassffssff3.ddfvbb10deerfgghhjuuloollgghhhyhh}, we apply Lemma \ref{lemma70} to reach a contradiction.
 Hence the  classical solution $(u,v,w)$ of \dref{1.1} is global in time and bounded. $\qed$

{\bf Acknowledgement}:
{\bf Acknowledgement}:
This work is partially supported by Shandong Provincial
Science Foundation for Outstanding Youth (No. ZR2018JL005).


\end{document}